\newcommand{\Inst}{\mathrm{Inst}}
\newcommand{\Tot}{\mathrm{Tot}}
\newcommand{\sing}{\mathrm{sing}}
\begin{document}

\author{Yongqiang Liu,  Guillermo Pe\~nafort Sanchis, Matthias Zach}

\title{Cohomological connectivity of perturbations of map-germs}

\address{Yongqiang Liu: The Institute of Geometry and Physics, University of Science and Technology of China, 96 Jinzhai Road, Hefei 230026 P.R. China} 
\email{liuyq@ustc.edu.cn}
\address{Guillermo Pe\~nafort Sanchis: Mathematics Department, Universitat de València, Dr. Moliner, 50,
46100 Burjassot - Spain}
\email{guillermo.penafort@uv.es}
\address{Matthias Zach: Institut f\"ur Algebraische Geometrie, Leibniz Universit\"at Hannover, Welfengarten 1, 30167 Hannover - Germany}
\email{zach@math.uni-hannover.de}

\subjclass[2010]{58K15, 58K60, 32S30, 57R45}
\thanks{The first author is partially supported by the starting grant KY2340000123 from University of Science and Technology of China, National Natural Science Funds of China (Grant No. 12001511) and the project ``Analysis and Geometry on Bundles" of Ministry of Science and Technology of the People`s Republic of China.\newline
The second author is partially supported by the ERCEA 615655 NMST Consolidator Grant and 
by the Basque Government through the BERC 2014-2017 program, the Spanish Ministry of Economy and
Competitiveness MINECO: BCAM Severo Ochoa excellence accreditation SEV-2013-0323 and by Programa de Becas Posdoctorales en la UNAM, DGAPA, Instituto de Matem\'aticas, UNAM}

	\begin{abstract}
Let $f\colon (\mathbb C^n,S)\to (\mathbb C^p,0)$ be a finite map-germ with $n<p$ and $Y_\delta$ the image of a small perturbation $f_\delta$. We show that
the reduced cohomology of $Y_\delta$ is concentrated in a range of degrees determined by the dimension of
the instability locus of $f$. In the case $n\geq p$ we obtain an analogous result, replacing finiteness by $\mathcal K$-finiteness and $Y_\delta$ by the discriminant $\Delta(f_\delta)$.  We also study the monodromy associated to the perturbation $f_\delta$.
	\end{abstract}


\maketitle

\section{Introduction}

In this paper we establish bounds for the vanishing cohomology for images and
discriminants of map-germs with non-isolated
instabilities. As will become clear, these results are parallel to
the classical
 bounds of Kato and Matsumoto on the cohomology of the Milnor fiber of
non-isolated hypersurface singularities \cite{KatoMatsumoto75}.

It is also our intention to illustrate that perverse sheaves are a powerful tool in the study of
singular mappings. For the reader who is unfamiliar with the machinery of
perverse sheaves, \cite{D2} provides a quick introduction. Here we have
included only the information that is necessary for our applications,
and have avoided technical definitions. For those who are already
well-versed in the topic, some non-trivially perverse sheaves related to
the alternating cohomology of multiple points are introduced in Section
\ref{secMultPoints}. The perversity of these sheaves was discovered by
Houston \cite{Houston00} .

 In this paper, all homology and cohomology groups are assumed to have 
 complex coefficients, unless otherwise specified.

\subsection{The Milnor fibration} Consider a germ of a
non-constant holomorphic function $g \colon (\C^{n+1},0) \to (\C,0)$ and 
a representative of $g$ thereof, defined on some open neighborhood of 
a closed ball $B_r$ of sufficiently small radius
$r$ centered at the origin. Consider a punctured
open disc $D^*$, centered at the origin in $\C$ and of radius $\delta>0$ 
with $\delta$ sufficiently small with respect to $r$.
This gives rise to a locally trivial fibration
\[
  g \colon   B_r \cap g^{-1}(D^*) \to D^*,
\]
called the \emph{Milnor fibration}, whose generic fibre
\[
 M=M_g(0)=g^{-1}(\delta)\cap B_r.
\] 
is known as the \emph{Milnor fibre} of $g$ at $0 \in \C^{n+1}$. Milnor showed \cite{Mil68} that if $g^{-1}(0)$ has an isolated
singularity at $0$, then $M$ is homotopy equivalent to a bouquet
of $n$-dimensional spheres. In particular, the reduced cohomology
of $M$---that is the vanishing cohomology of $g$ at $0$---is concentrated in the middle
degree.  Later, Kato and Matsumoto \cite{KatoMatsumoto75} established
their results for functions with non-isolated singularities: If the
critical locus $\Sigma(g)$ has dimension $d$, then the Milnor fiber $M$
is at least $(n-d-1)$-connected.  Consequently, the reduced cohomology $\tilde
H^p(M)$ is concentrated in the range of degrees
$n-d \leq p \leq n$. This concentration of reduced cohomology is what we refer to as \emph{cohomological
connectivity}.

Besides the cohomological version of Kato's and Matsumoto's connectivity
result, we wish to also study the monodromy
transformations. Since our setting will be slightly different from that
of the Milnor fibration, we briefly review the general notion of
monodromy for a topologically locally trivial fibration with fiber 
$F$ 
\[
  E\stackrel{\pi}{\longrightarrow} D^*
\] 
over a punctured disk $D^*$. Let $\exp \colon S \to D^*$ be the 
universal cover of $D^*$ by an infinite strip $S$. Since $S$ is simply 
connected, we may choose a \textit{global} trivialization of the 
the fiber product 
\[
  E' := E \times_{D^*} S \cong F \times S
\]
and deliberately identify the fiber $F_\delta = \pi^{-1}(\delta)$ 
over a point $\delta \in D^*$ with 
any fiber of $E'$ over a point $\delta'$ mapping to $\delta \in D$.
The chosen global trivialization of $E'$ over $S$
furnishes a notion of parallel transport of the fiber $F$ over 
$D^*$ which is unique up to homotopy and independent of the choices made.
The monodromy operator is defined to 
be the homeomorphism 
\[
  h \colon F \to F
\]
obtained from parallel transport along a closed loop in $D^*$ passing once 
counterclockwise around the origin. It follows that the induced maps on 
cohomology 
\[
  h^i \colon H^i(F) \to H^i(F)
\]
are well defined automorphisms. 

Let us recall the classical monodromy theorem in the Milnor fibration setting, see {\cite[Theorem 3.1.20]{D1}: 
\begin{thm} Let $h^i\colon H^i(M)\to H^i(M)$ be the $i$-th monodromy operator on the Milnor fibre associated to a germ $g\colon (\C^{n+1},0)\to(\C,0)$ (not necessarily with isolated singularities). Then 
\begin{enumerate}
\item[(1)]  the eigenvalues of $h^i$ are roots of unity. 
\item[(2)] the size of the Jordan blocks of $h^i$ is at most $i+1$. 
\end{enumerate}
\end{thm}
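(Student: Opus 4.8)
The statement is classical --- see \cite{D1} --- and we only indicate the two standard arguments, so as to make transparent where the genuine input enters.

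\emph{Part (1): quasi-unipotence of the eigenvalues.} The plan is to reduce to a normal crossings situation. By Hironaka's theorem, choose an embedded resolution $\pi\colon X\to B_r$ of the hypersurface $g^{-1}(0)$, i.e.\ a proper map from a smooth space, an isomorphism over $B_r\setminus g^{-1}(0)$, such that $(g\circ\pi)^{-1}(0)=\sum_{i\in I}m_i E_i$ is a simple normal crossings divisor. Because $\pi$ is an isomorphism over the punctured disc, it identifies $M$ with the ``upstairs Milnor fibre'' $M'=(g\circ\pi)^{-1}(\delta)\cap\pi^{-1}(B_r)$, compatibly with monodromy; equivalently, proper base change for nearby cycles identifies $H^*(M)$, the stalk cohomology of $\psi_g\mathbb C_{B_r}$ at $0$, with $\mathbf H^*\!\bigl(\pi^{-1}(0),\psi_{g\circ\pi}\mathbb C_X\bigr)$, again compatibly with the monodromy action. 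One then computes the monodromy locally on $X$: in a chart where $g\circ\pi$ equals a unit times a monomial $\prod_{i\in J}x_i^{m_i}$, the local Milnor fibration is completely explicit and its monodromy is quasi-unipotent, with semisimple part of order dividing $\operatorname{lcm}_{i\in J}m_i$. Assembling the local pictures via the spectral sequence attached to the stratification of the exceptional divisor by multiplicities --- this is the content of A'Campo's formula and its refinements --- shows that $h^m$ is unipotent for $m=\operatorname{lcm}_{i\in I}m_i$; hence every eigenvalue of $h^i$ is an $m$-th root of unity.

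\emph{Part (2): bound on the Jordan blocks.} Here the plan is Hodge-theoretic. Write the Jordan decomposition $h^i=T_sT_u$ of the monodromy into commuting semisimple and unipotent parts, and put $N=\log T_u$, a nilpotent operator. Since $T_u-\mathrm{id}=N\cdot(\mathrm{id}+\tfrac{1}{2}N+\cdots)$ with an invertible second factor, $N^k$ and $(T_u-\mathrm{id})^k$ have the same kernel for all $k$; and since $T=T_sT_u$ has on each generalized eigenspace the same Jordan block sizes as $T_u$, it suffices to show that $N^{i+1}=0$ on $H^i(M)$. For this one invokes the canonical mixed Hodge structure on $H^i(M)$ --- Steenbrink's limit mixed Hodge structure, or equivalently the one carried by the nearby cycle complex $\psi_g\mathbb C_{B_r}$ in Saito's theory of mixed Hodge modules --- which has the two properties: (i) $h^i$ is an automorphism of it, $T_s$ preserves the weight filtration $W_\bullet$, and $N$ is a morphism of type $(-1,-1)$, so $N(W_k)\subseteq W_{k-2}$; and (ii) the weights of $H^i(M)$ lie in $[0,2i]$, i.e.\ $W_{-1}H^i(M)=0$ and $W_{2i}H^i(M)=H^i(M)$. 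Combining the two, $N^{i+1}\bigl(H^i(M)\bigr)\subseteq W_{2i-2(i+1)}H^i(M)=W_{-2}H^i(M)=0$, so $N^{i+1}=0$ and every Jordan block of $h^i$ has size at most $i+1$. (As $M$ is a smooth Stein manifold of complex dimension $n$ it has the homotopy type of an $n$-dimensional CW complex, so only $i\le n$ contribute non-trivially, in agreement with the connectivity phenomena that are the subject of this paper.)

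\emph{Where the difficulty lies.} Both assertions become formal manipulations once two deep theorems are granted: resolution of singularities together with the explicit monodromy computation for monomial maps, for Part (1); and the existence of a mixed Hodge structure on the cohomology of the Milnor fibre, compatible with the monodromy as in Part (2). These are the substantial ingredients --- Hironaka, and Clemens--Landman--Grothendieck for quasi-unipotence; Schmid--Steenbrink--Saito for the limit mixed Hodge structure --- and there is no way to avoid them, whereas the combinatorial bookkeeping in Part (1) and the one-line weight estimate in Part (2) are routine by comparison. Since the result is classical, we shall simply cite \cite{D1}.
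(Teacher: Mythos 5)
The paper does not prove this statement at all—it is quoted as the classical monodromy theorem with a citation to \cite{D1}—so there is no internal proof to compare against, and your deferral to the classical literature is exactly what the paper does. Your sketch of the two standard arguments (quasi-unipotence via an embedded resolution and the local monomial computation assembled through the nearby-cycle spectral sequence; the Jordan-block bound via the limit mixed Hodge structure, with $N$ of type $(-1,-1)$ and weights of $H^i(M)$ in $[0,2i]$ giving $N^{i+1}=0$) is accurate and correctly identifies where the deep input lies.
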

It is worth noting that the first item is valid in the more general setting of the fibration induced by any analytic map-germ $g\colon (X,0)\to (\C,0)$, where $X$ is an arbitrary analytic space \cite{Le78}. 

\subsection{The fibration associated to an unfolding}
Rather than looking at germs of hypersurfaces, we will
be looking at singularities that arise from multigerms of mappings 
\[
  f \colon (\C^n,S) \to (\C^p,0).
\]
Here $S$ is a \emph{finite} subset of $\C^n$, mapped to $0$ by $f$. Moreover, we always assume $f$ to be $\cK$-finite (for its definition, see Definition 2.1).

Whenever $p>n$, our attention will be directed towards the image of $f$, denoted by
\[
  (Y,0)=(\im f,0) \subset (\C^p,0).
\]
For obvious reasons, the analytic space $(Y,0)$ is sometimes called a 
\emph{parametrizable singularity}. 

Whenever $n\geq p$, $\cK$-finite mappings are surjective and there is 
no interest in studying the image of $f$ in this case. The attention 
is directed towards the \textit{discriminant} instead:
the \emph{critical locus} of $f$ is the germ 
\[
  \Sigma(f)=\{x\in \C^n\mid
\mathrm df_x\text{ is not surjective}\}
\]
and the discriminant of $f$ is defined to be the image
\[
  \Delta(f)=f(\Sigma(f)).
\] 
Observe that the study of discriminants comprises the case of
parametrizable singularities in the sense that, for $p>n$, the
differential cannot be surjective and hence $\Sigma(f)=\C^n$ and
$Y=\Delta(f)$.

\medskip

Let us summarize how the classical Milnor fibration is replaced by the fibration determined by a
one-parameter family in the context of map germs: 
A \emph{ one-parameter unfolding} $F = (f_t,t)$ of $f$ is a germ
\[
  F\colon (\C^n\times \C,S\times\{0\})\to(\C^p\times \C,0),
\]
of the form $F(x,t)=(f_t(x),t)$ and such that $f_0=f$. 
In the case $n<p$, the projection from the image  $Y=\im F$ to the
parameter space gives a fibration, the generic fiber of which 
is $Y_\delta$, the image of a perturbation $f_\delta$. 
In the case $n\geq p$ we consider the fibration defined on the 
discriminant $\Delta(F)$ whose generic
fibre is $\Delta(f_\delta)$. These fibrations depend not only on the map
germ $f$, but also on the chosen unfolding $F$. Special attention is paid
to the  case where the perturbations $f_\delta$ are stable. In this case,
the generic fibre $Y_\delta$ (or $\Delta(f_\delta)$) plays a role closer
to that of the classical Milnor fibre. In the case $n<p$, the image
$Y_\delta$ of a stable perturbation is called a \emph{disentanglement}
of $f$. For the precise definition of these terms see Section 3.1.

Unfoldings play the role of deformations and are still local objects. To define perturbations---the nearby objects---we need a well chosen representative. It is customary to absorb all associated technicalities 
in the definition of a \textit{good representative}. 

\begin{definition}\label{defGoodRep}\label{defPerturbation}
  Let $V \subset\C^p$ and $T \subset \C$ be open neighborhoods of the origin 
  and 
  \[
    F\colon W \to V\times T
  \]
  a representative of a one parameter unfolding 
 defined on an open subset $W \subset \C^n\times \C$. We call 
 $F$ a \emph{good representative} if it is a $\cK$-finite map and moreover 
 satisfies the following conditions:
  \begin{enumerate}
    \item the family $\pi \colon \Delta(F)\to T$ is a locally trivial fibration 
      over $T\setminus\{0\}$,  
    \item the central fiber $\pi^{-1}(0)$ is contractible,
    \item the space $\Delta(F)$ retracts onto the central fiber.
  \end{enumerate}
  For any fixed nonzero value $\delta\in T\setminus\{0\}$ in the parameter space of
  a good representative the map 
  \[
    f_\delta\colon W_\delta \to V
  \] 
  on $W_\delta := W \cap (\C^n \times \{\delta\})$ 
  is called a \emph{perturbation} of $f$. With no risk of confusion, we also write $f$
  for the representative $f_0\colon W_0\to V$.
\end{definition}

\begin{rem}
 A good representative of a $\cK$-finite germ can be obtained from an arbitrary representative $F \colon U \times T \to V \times T$ as follows: 
The discriminant $ \Delta(F)$ in $V \times T$ is a closed 
complex analytic set and the projection $\pi \colon V \times T \to T$ 
a holomorphic function on it.
The Milnor-L\^e fibration asserts that 
for sufficiently small ball $B_r \subset V$ around the origin 
and a subsequently chosen disc 
$D_\delta \subset T$ with 
$r \gg \delta >0$ the restriction 
\[
  \pi \colon \Delta(F) \cap (B_r \times D_\delta) \to D_\delta,
\]
is a map satisfying the three properties mentioned above.
Now the good representative is furnished by choosing 
$B_r \times D_\delta$ small enough such that the restriction 
\[
  F|_{F^{-1}(B_r \times D_\delta)} \colon F^{-1}(B_r\times D_\delta) \to B_r \times D_\delta
\]
is a $\cK$-finite map, replacing $V\times T$ by $B_r \times D_\delta$, and 
finally setting $W := F^{-1}(B_r \times D_\delta)$.
\end{rem}

Parametrizable singularities and even their disentanglements are usually 
highly singular. This is due to the fact that the image of a map $f$ may be singular 
even if $f$ is stable, that is when $f$ does not admit any non-trivial unfoldings; 
see the examples below. 
Thus, the singular locus of 
$(Y,0)$ is not well suited 
to be the analogue of the critical locus of $g$ from the classical 
Kato-Matsumoto result. Instead, we will be considering the 
\emph{instability locus} 
\[
  \Inst(f)\subseteq \Delta(f)
\]
which is an analytic subset of the discriminant of $f$ (see Section \ref{secStability}). The bound 
on the vanishing cohomology of the disentanglement will be given 
in terms of the dimension $d$ of $\Inst(f)$.
We give some examples to illustrate the situation.

\begin{ex}\label{exDoublePoint}
Consider the function 
\[
  g \colon \C^3 \to \C, \quad (x,y,z) \mapsto xy
\]
and the associated hypersurface $X = g^{-1}(0)$.
Since $z$ is a dummy coordinate, the Milnor fiber $M_g(0)$ has the homotopy type of a circle.
This is consistent with Kato and Matsumoto's theorem, because the critical locus 
of $g$, and hence also the singular locus of $X$, has dimension one, see Figure \ref{figNodeTimesIntervalMilnor}. 
 \begin{figure}
\begin{center}
\includegraphics[scale=0.9]{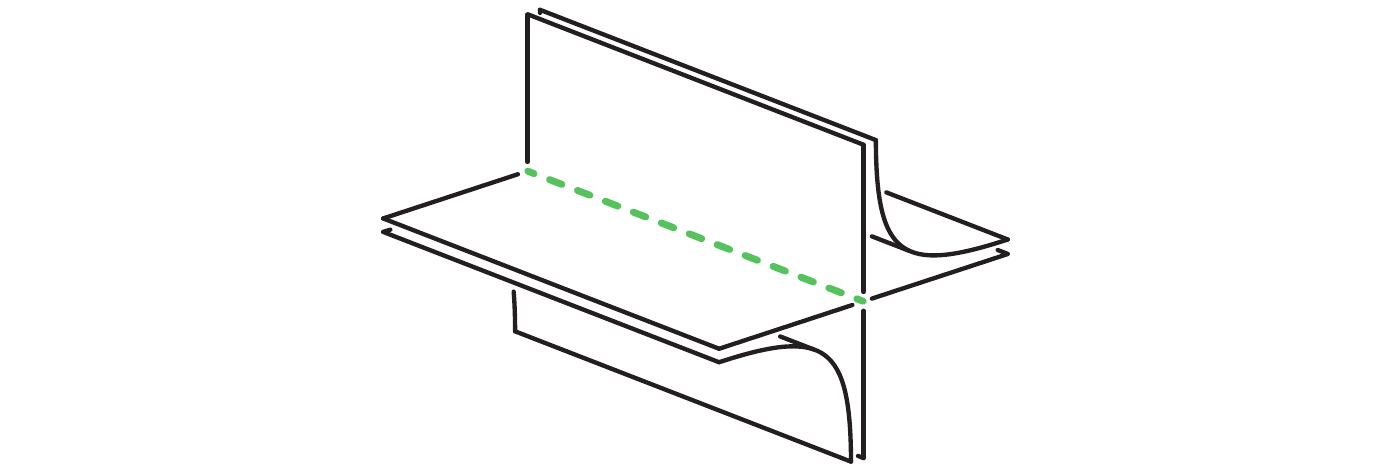}
\end{center}
\caption{The singularity $X=\{xy=0\}$ and its Milnor fibre $\{xy=\delta\}$. The dashed green line is the singular locus of $X$.}
\label{figNodeTimesIntervalMilnor}
\end{figure}

The situation changes when we think of $(X,0)$ as a parametrized singularity 
given by a bi-germ 
\[
  f \colon (\C^2,\{p,q\}) \to (\C^3,0)
\]
with the two obvious branches, see Figure \ref{figNodeTimesIntervalParametrization}. 

 \begin{figure}
\begin{center}
\includegraphics[scale=0.9]{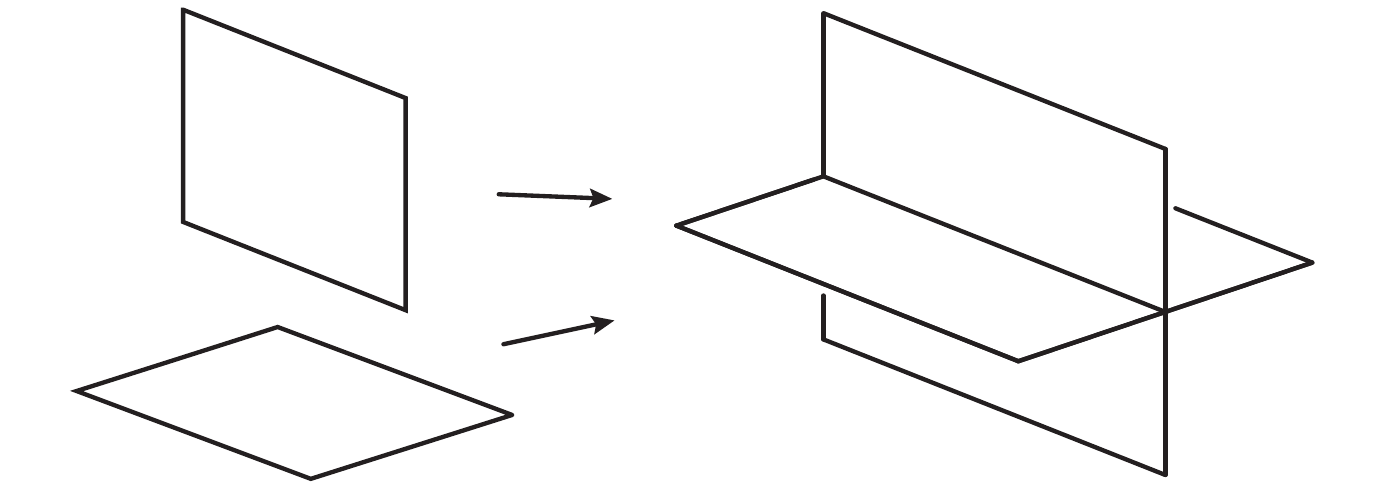}
\end{center}
\caption{The same singularity as in Figure \ref{figNodeTimesIntervalMilnor}, now regarded as the image of a transverse double point bi-germ.}
\label{figNodeTimesIntervalParametrization}
\end{figure}
This map germ is known as a ``transverse double point''.
It is stable, meaning that it cannot be perturbed by any unfolding, 
up to analytic isomorphism. Therefore, a sufficiently small representative of the image of $f$ coincides with its 
disentanglement. We see that, unlike the Milnor fiber $M_g(0)$ of $g$, 
the disentanglement is a singular space with two smooth branches crossing transversally and it has trivial reduced cohomology.
 \end{ex}

\begin{ex}\label{exFinDet}We will now consider map germs $f$ with
 \emph{isolated instability}, i.e.
   those for which $d = \dim \Inst(f) = 0$. 
   This property turns out to be equivalent to $f$ being 
   \emph{finitely determined} \cite{MatherIII}.
   The families $S_k,B_k$ and $H_k$ of Mond \cite{Mond1985On-the-clasific} are 
   examples of such finitely determined germs $(\C^2,0) \to (\C^3,0)$.
   Here we consider the germ 
   \[
     (x,y)\mapsto(x^2,y^2,x^3+y^3+xy),
   \]
   depicted in Figure \ref{figDoubleFoldTrifold}.
   
    \begin{figure}
\begin{center}
\includegraphics[scale=0.9]{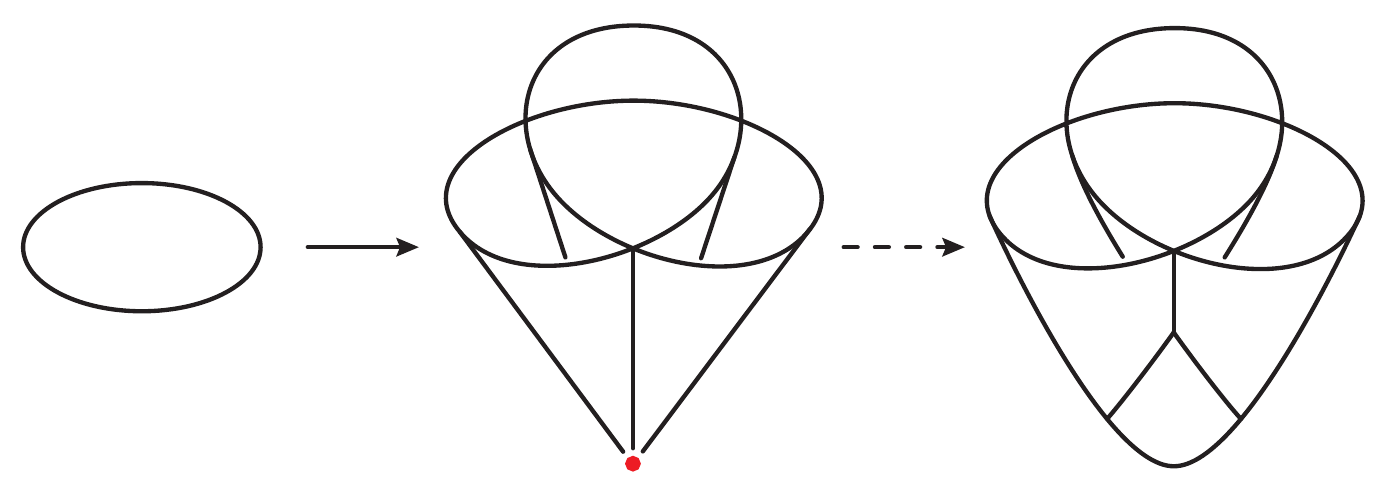}
\end{center}
\caption{A germ $f$ with isolated instability and its perturbation $f_\delta$. The red thick dot at the vertex of the cone represents the instability locus $\Inst(f)$.}
\label{figDoubleFoldTrifold}
\end{figure}

   Mond showed that the disentanglement of a finitely determined germ
   $(\C^n,0)\to(\C^{n+1},0)$ has the homotopy type of a bouquet of
   $n$-dimensional spheres \cite{Mond:1991}. Moreover, he showed that
   the number of spheres is independent of the chosen stabilization. 
Moreover, all finitely determined map-germs $f\colon(\C^n,0)\to(\C^{n+1},0)$ with
  $n\leq 14$ admit perturbations to stable mappings\footnote{
    The pairs $(n,p)$ of dimensions where every map-germ $(\C^n,S)\to(\C^p,0)$
    admits a stable perturbation are called Mather's nice dimensions. 
    These comprise all pairs $(n,n+1)$ with $n\leq 14$, cf. 
    \cite[Section 5.2.2]{MNB20}).  
  }
  $f_\delta$. 
  
  A similar connectivity result holds for
  discriminants of perturbations of finitely determined map-germs $f\colon
  (\C^n,0)\to(\C^p,0)$, when $n\geq p$ \cite{Damon:1991}.  The discriminant $\Sigma(f_\delta)$ has the homotopy type of a bouquet of spheres of dimension $p-1$. 

\end{ex}

\begin{ex}\label{exCuspEdge}
  The cuspidal edge $f\colon(\C^2,0)\to (\C^3,0)$, given by
  \[
 (x,y)\mapsto(x,y^2,y^3),
  \] 
  parametrizes the
  cartesian product of a usual cusp and the complex line, see Figure \ref{figAdjacenciesBinfty}. 
  
 It is not finitely
  determined, because it has a line of instabilities. 
 Maps with non-isolated 
  instability locus  may admit more than one disentanglement:
  The cuspidal edge can, for example, be perturbed
  into a cuspidal node 
  \[
    (x,y)\mapsto(x,y^2,y^3-\delta y)
  \] 
  which is stable
  and has the homotopy type of a circle. Another perturbation of $f$ is
   \[
    (x,y)\mapsto(x,y^2,y^3+\delta y(x^2-\delta)).
  \]
  The image of this last one has the homotopy type of a bouquet of
  two-dimensional spheres. 

\end{ex}
     \begin{figure}\label{figAdjacenciesBinfty}
\begin{center}
\includegraphics[scale=0.90]{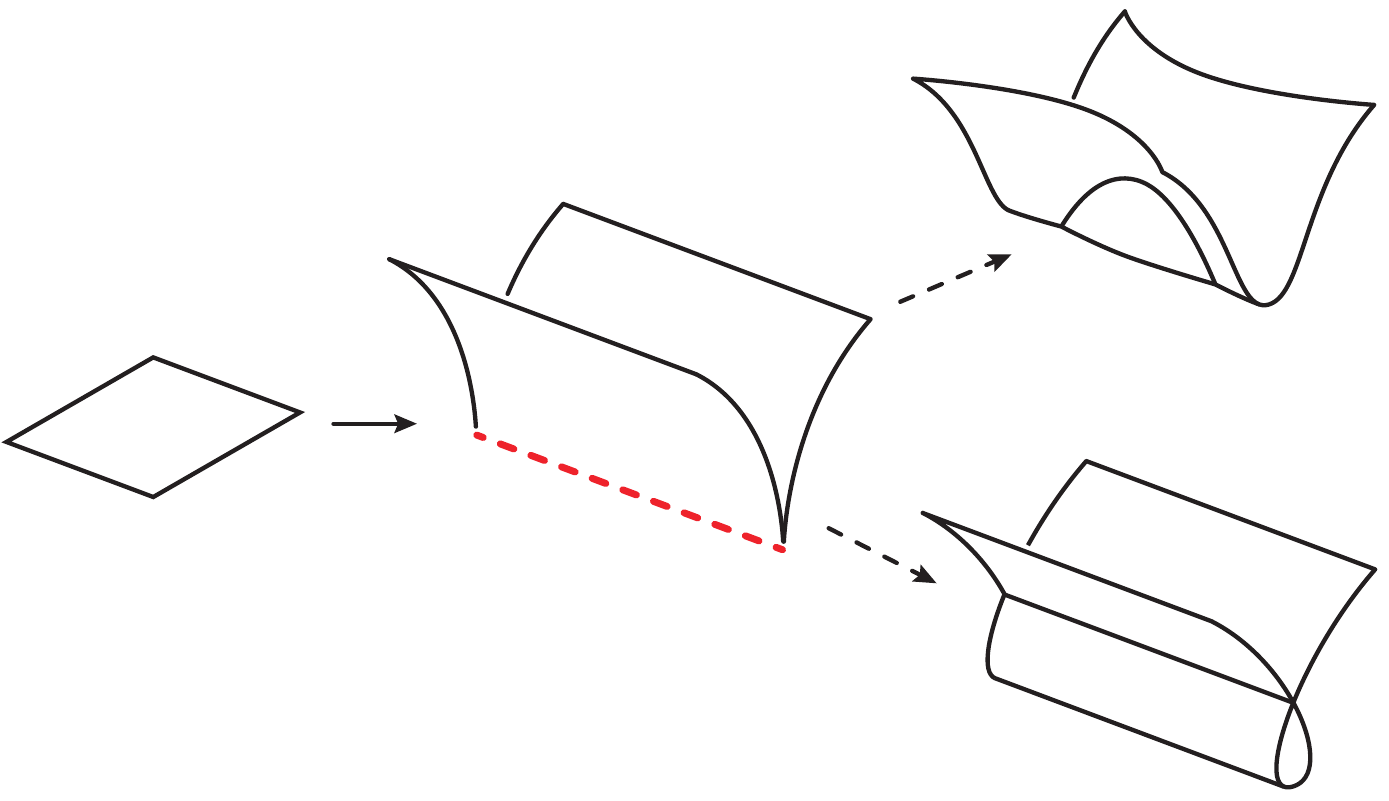}
\end{center}
\caption{The cuspidal edge and two different perturbations. The red dashed line represents the instability locus $\Inst(f)$.}
\label{figAdjacenciesBinfty}
\end{figure}

The main point of this article is to show that 
despite the fact that different unfoldings may lead to different 
disentanglements with possibly distinct homotopy types, 
there is \emph{always} a bound on 
the degrees of the nontrivial reduced cohomology groups of \emph{any} given disentanglement. 
As the three examples above suggest, this bound 
is not related to the dimension of the singular locus of the image 
(equal to one for all the germs $(\C^2,0)\to(\C^3,0)$ in our examples), 
but it is controlled by the dimension of the instability locus 
(empty, zero-dimensional and one-dimensional, respectively in the examples). 


Before stating our results, we shall introduce a construction that
connects the Milnor fibration to disentanglement fibration. Indeed,
we will show that the study of the Milnor fibration of germs of
hypersurfaces in $\C^n$ is equivalent to that of disentanglements of
bi-germs of immersions $(\C^n,\{p,q\})\to (\C^{n+1},0)$.

\begin{ex}\label{exConstructionBiGerm}
To any hypersurface $X=V(g)\subseteq (\C^n,0)$,  not necessarily with isolated singularity, we are going to associate a bi-germ of immersion $(\C^n,0)\sqcup (\C^n,0)\to\C^{n+1}$. For convenience, label the two copies of $(\C^n,0)$ as $U_1$ and $U_2$, then let $f\colon U_1\sqcup U_2\to (\C^{n+1},0)$ be the map of the form
\[
   x \mapsto 
  \begin{cases}
    (x,g(x)) & \textnormal{ if } x \in U_1 \\
    (x,0) & \textnormal{ if } x \in U_2
  \end{cases}
\]
A different choice $g'$ of a generator of the ideal $\langle g\rangle$ gives rise to a different map $f'$, but there is a change of coordinates in $(\C^{n+1},0)$ turning $f$ into $f'$. In other words, $f$ and $f'$ are $\mathcal A$-equivalent and, consequently, the study of their disentanglements is equivalent. 

Conversely, every bi-germ of immersion $(\C^n,S)\to (\C^{n+1},0)$
arises---up to $\mathcal A$-equivalence---by this construction: Given
such a bi-germ, we can take a change of coordinates turning the second
branch into $x\mapsto (x,0)$, and so that the normal vector to the first
branch at the origin has a nonzero last coordinate in $\C^{n+1}$. This
makes the first branch locally into a graph, as desired. A more direct
way to invert the process is as follows: The two branches of a bi-germ
$f$ of immersion are two germs $f^{(1)},f^{(2)}\colon (\C^n,0)\to
(\C^{n+1},0)$. Take a reduced equation $L=0$ for the image of the first
branch. Applying the above contruction to the function germ $g=L\circ
f^{(2)}\colon (\C^n,0)\to (\C,0)$ gives rise to a bi-germ
which is $\cA$-equivalent to the original immersion $f$. 
 
 Having explained the construction and its inverse, let us describe the relation between the hypersurface $X$ and the immersion $f$: First and most obvious, the intersection of the two branches is
 \[\im (f\vert_{U_1})\cap \im (f\vert_{U_2})=X\times\{0\}.\]
 
Moreover, the two branches cross transversally, except on the singularities of $X$. It is well known that the instabilities of an immersion  are located precisely at points where the branches do not intersect in general position (this is a particular case of Theorem 3.3 in \cite{MNB20}, taking into account that monogerms of immersions are stable). In particular, it follows that
\[\Inst (f)=\Sing X\times \{0\}.\]

The Milnor fibre of $X$ has the form $M=g^{-1}(\{\delta\})\cap B_r$, for suitable choices of $r \gg \delta >0$. The same discussion about singularities and stabilities  shows that a stabilization of $f$ is given by the bi-germ $F=(f_t,t)$, with \[f_t\vert_{U_1}=f\vert_{U_1}\quad\text{and}\quad f_t\vert_{U_2}=(x,t).\] A stable perturbation of $f$ is given by $f_\delta\colon B_r\sqcup B_r\to \C^{n+1}$ and the Milnor fibre  $M$ of $g$ is recovered as the intersection of the two branches of $f_\delta$. 
     \begin{figure}
\begin{center}
\includegraphics[scale=0.90]{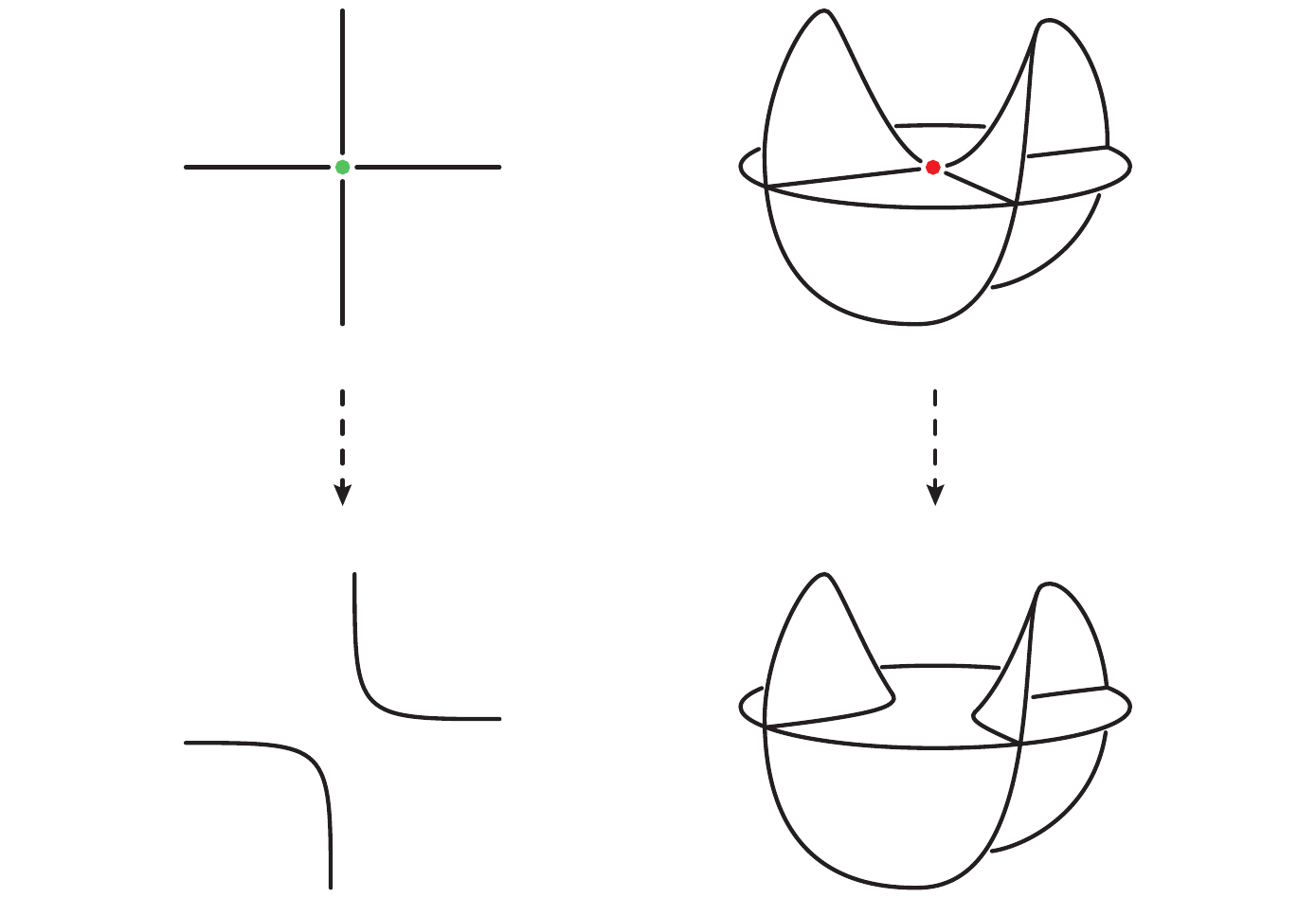}
\end{center}
\caption{The singularity $\{xy=0\}\subseteq \C^2$ and its Milnor fibre can be realized as the intersection of the branches of a bi-germ and its stable perturbation. The singular locus of $\{xy=0\}$ corresponds to the instability locus of the bi-germ.}
\label{figSaddle}
\end{figure}
\end{ex}

Our claim that the study of the Milnor fibration of hypersurfaces is
the same as the study of disentanglements of bi-germs of immersions 
also carries over to the vanishing topology in the following sense:

\begin{rem}\label{remCorrespondenceMilnorDisentanglement}
Let $M$ be the Milnor fibre of a hypersurface $X=V(g)$ and $Y_\delta =\im f_{\delta}$ be the disentanglement of the bi-germ associated to $g$, as in Example \ref{exConstructionBiGerm}. Then, 
there is an isomorphism \[\tilde H^i(M)\cong \tilde H^{i+1}(Y_\delta),\]
compatible with the monodromy actions on $M$ and $Y_\delta$.  In fact, $Y_\delta$ is homotopy equivalent to  the suspension of $M$ and we will come back to this isomorphism on  Example \ref{exSpectralSeqBiGermImmersion} and Remark \ref{remTwoPointSuspension} to illustrate the more general approach to the computation of the cohomology  of mappings. 
\end{rem}
 

\section{Main Results}

In this paper, we always assume that $f \colon (\C^n,S) \to (\C^p,0)$ is $\cK$-finite:

\begin{definition}
A map $f$ is \emph{$\cK$-finite} if the restriction $f\vert_{\Sigma(f)}$ is a finite map.
\end{definition}
This condition arises
naturally in the study of contact equivalence of singular map-germs
\cite[Section 4.4]{MNB20} and it is generally considered mild. What
appears here as its definition is usually regarded as a geometric
characterization \cite[Proposition 4.3]{MNB20}.
  Observe that, for $p> n$, the condition for a germ
  $f\colon(\C^n,S)\to(\C^p,0)$ to be $\cK$-finite is just that $f$ is
 finite.
 
 The $\cK$-finiteness condition is helpful in at least three ways: First of all, it entails that for 
 dimensions $p\leq n+1$ the discriminant
  $\Delta(f)$ is a hypersurface in $\C^p$ (Proposition
  \ref{propDimOfDelta}). 
 Second, the instability locus of  $\cK$-finite mappings is analytic
 (Proposition \ref{propKFiniteImpliesInstAnalytic}). Lastly,
 $\cK$-finiteness ensures the existence of stable unfoldings which
 are used in the definition of multiple point spaces in Section
 \ref{secMultPoints}.

\begin{thm}\label{KMB}
  Let $f\colon (\C^n,S) \to (\C^p,0)$ be a $\cK$-finite map-germ, with
  $p\leq n$, and let $d=\dim(\Inst(f))$. The reduced cohomology of the discriminant of any perturbation $f_\delta$
 satisfies \[\tilde H^q(\Delta(f_\delta))=0,\text{ for any }q\notin [p-1-d,p-1].\]
\end{thm}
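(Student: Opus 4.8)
The plan is to reduce the statement to a vanishing result for perverse sheaves on the discriminant by induction on the dimension $d=\dim\Inst(f)$, in the spirit of Kato--Matsumoto. First I would set up the geometry: choose a good representative $F\colon W\to B_r\times D_\delta$ of a stable unfolding (which exists by $\cK$-finiteness), so that $\pi\colon\Delta(F)\to D_\delta$ is a locally trivial fibration over $D_\delta\setminus\{0\}$ with fibre $\Delta(f_\delta)$, and the total space $\Delta(F)$ retracts onto the contractible central fibre $\Delta(f)$. Since $\pi\leq n+1$ gives $p\leq n$, by Proposition~\ref{propDimOfDelta} the discriminant $\Delta(F)$ is a hypersurface in $B_r\times D_\delta$, so $\pi$ is a holomorphic function on it and $\Delta(f_\delta)$ is literally the Milnor fibre of this function at $0$. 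Thus the reduced cohomology $\tilde H^q(\Delta(f_\delta))$ is computed by the vanishing cycle complex $\varphi_\pi\mathbb{Q}_{\Delta(F)}[\text{shift}]$ supported on the central fibre $\Delta(f)$, and what we must control is in which degrees its stalk cohomology at $0$ is nonzero.

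The key point is that the relevant complex is (a shift of) a perverse sheaf on $\Delta(f)$, and that this perverse sheaf is supported on $\Inst(f)$. For the perversity: $\Delta(F)$ is a hypersurface, hence $\mathbb{Q}_{\Delta(F)}[\dim\Delta(F)]=\mathbb{Q}_{\Delta(F)}[p]$ is perverse up to the standard abuse (it is perverse on the smooth part and the nearby/vanishing cycle functors are $t$-exact up to shift by one), so $\varphi_\pi\mathbb{Q}_{\Delta(F)}[p-1]$ is perverse on $\Delta(f)$. For the support: wherever $f_\delta$ is stable — equivalently, away from $\Inst(f)$ — the perturbation looks like the restriction of the unfolding to a slice, the family $\Delta(F)$ is locally a product $\Delta(f)\times D_\delta$ near such points (this is exactly the statement that stable germs have no nontrivial unfoldings, combined with the triviality of the family over $\Delta(f)\setminus\Inst(f)$), and the vanishing cycles of the trivial family are zero. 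Hence $\varphi_\pi\mathbb{Q}_{\Delta(F)}[p-1]$ is a perverse sheaf supported on the $d$-dimensional set $\Inst(f)$.

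Once we know $\mathcal{F}:=\varphi_\pi\mathbb{Q}_{\Delta(F)}[p-1]$ is perverse with $\dim\Supp\mathcal{F}\leq d$, the degree bounds follow from the support and cocodimension conditions defining perversity on a space of dimension $\leq d$: for a perverse sheaf supported on a set of dimension $d$, the stalk cohomology $\mathcal{H}^j(\mathcal{F})_0$ vanishes for $j>0$ and for $j<-d$, so $\mathcal{H}^j(\mathcal{F})_0\neq 0$ forces $-d\leq j\leq 0$. Translating back through the shift by $p-1$, the vanishing cohomology $\tilde H^q(\Delta(f_\delta))=\mathcal{H}^{q-(p-1)}(\mathcal{F})_0$ is concentrated in $p-1-d\leq q\leq p-1$, which is exactly the claimed range $q\in[p-1-d,p-1]$.

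The main obstacle will be step two: making precise, and proving, that $\varphi_\pi\mathbb{Q}_{\Delta(F)}$ is supported on $\Inst(f)$ and not merely on $\Sing\Delta(f)$. This requires a clean statement that over the complement of the instability locus the family $\Delta(F)\to D_\delta$ is, locally near the central fibre, analytically trivial — i.e. that the disentanglement process introduces vanishing cycles only at instabilities. I expect this to follow from the theory of stability and versality (every point of $\Delta(f)\setminus\Inst(f)$ has a neighbourhood on which $f$ is stable, hence $F$ restricted there is a trivial unfolding), but it needs to be done with enough care that the triviality is compatible across the whole family and one genuinely gets $\varphi_\pi\mathbb{Q}=0$ there; the perversity statement itself is then a formal consequence of $t$-exactness of nearby and vanishing cycles, and the final degree count is immediate from the definition of perversity on a $d$-dimensional support.
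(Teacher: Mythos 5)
Your proposal follows essentially the same route as the paper's proof: the shifted constant sheaf $\C_{\Delta(F)}[p]$ is perverse because $\Delta(F)$ is a hypersurface (Proposition \ref{propDimOfDelta} together with Proposition \ref{propPerversityConstantSheafLCI} — this is the correct justification, rather than perversity ``on the smooth part''), the vanishing cycle functor of the projection to the parameter preserves perversity, the support of $\phi_\pi\C_{\Delta(F)}[p-1]$ lies in $\Inst(f)$ by local triviality of the unfolding at points where $f$ is stable (the paper's Lemma \ref{lemSuppVanishingCycleDiscriminant}), and the degree range then follows from the stalk bound for perverse sheaves with $d$-dimensional support plus the good-representative identification of the stalk at $0$ with $\tilde H^{\,\bullet}(\Delta(f_\delta))$. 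One small correction: since the theorem concerns an arbitrary perturbation, you should run the argument for the given one-parameter unfolding $F$ rather than choosing a stable one — your proof never uses stability of $F$, only stability of the germ of $f$ away from $\Inst(f)$, so nothing else changes.
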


The previous result also holds for disentanglements, as they are particular cases of perturbations.

\begin{thm}\label{KMA}
  Let $f\colon (\C^n,S) \to (\C^{n+1},0)$ be a finite map-germ and
  let $d=\dim(\Inst(f))$. The reduced cohomology of the image of any perturbation $f_\delta$
 satisfies  \[\tilde H^q(Y_\delta)=0\text{, for any }q\notin [n-d,n].\]
\end{thm}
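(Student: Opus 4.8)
The plan is to realise $\tilde H^*(Y_\delta)$ as the hypercohomology of a vanishing cycle complex on the total space of the unfolding and to exploit the perverse $t$-structure. First I would fix a good representative $F\colon W\to V\times T$ of the one-parameter unfolding through which $f_\delta$ is obtained, and set $Y:=\im F=\Delta(F)\subset V\times T$; the two descriptions agree because the source of $F$ has dimension $n+1$, one less than its target. Since $F$ is $\cK$-finite and its target dimension equals its source dimension plus one, Proposition \ref{propDimOfDelta} applies to $F$ and shows that $Y$ is a hypersurface in $V\times T\subset\C^{n+2}$; since $Y$ is then a codimension-one local complete intersection of pure dimension $n+1$, the shifted constant sheaf $\underline{\C}_Y[n+1]$ is perverse. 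Let $t\colon Y\to T$ be the restriction of the projection. By the defining properties of a good representative, the central fibre of $\pi\colon\Delta(F)\to T$---which we identify with $\Delta(f)=\im f$ and denote $Y_0$---is contractible and a deformation retract of $Y$, while the generic fibre is the image $Y_\delta$ of $f_\delta$. The triangle $\underline{\C}_{Y_0}\to\psi_t\underline{\C}_Y\to\phi_t\underline{\C}_Y\xrightarrow{+1}$ on $Y_0$, together with $H^*(Y_0)=H^*(\mathrm{pt})$ and the identification $\mathbb H^*(Y_0;\psi_t\underline{\C}_Y)=H^*(Y_\delta)$ furnished by the Milnor--L\^e fibration, yields a natural isomorphism
\[
  \tilde H^q(Y_\delta)\;\cong\;\mathbb H^q\bigl(Y_0;\phi_t\underline{\C}_Y\bigr)\qquad\text{for all }q,
\]
so the theorem becomes a statement about this hypercohomology.

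The crucial geometric input is that $\phi_t\underline{\C}_Y$ is supported over the instability locus $\Inst(f)\subseteq Y_0$. If $y\in Y_0\setminus\Inst(f)$, then the multigerm of $f$ along $f^{-1}(y)$ is stable and hence, by Mather's theorem, admits only trivial unfoldings; in particular the multigerm of $F$ along $f^{-1}(y)\times\{0\}$ is $\cA$-equivalent to the product of a map-germ with $\mathrm{id}_\C$. Thus near $(y,0)$ the space $Y$ is the product of a (contractible) representative of the germ $(Y_0,y)$ with a disc on which $t$ is the coordinate projection; hence $t$ has no vanishing cycles there and $\phi_t\underline{\C}_Y=0$ on a neighbourhood of $(y,0)$. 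Consequently
\[
  \Supp\bigl(\phi_t\underline{\C}_Y\bigr)\subseteq \Inst(f)=:Z,\qquad \dim Z=d.
\]

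Now I would combine the two ingredients. Since the functor $\phi_t[-1]$ is $t$-exact for the perverse $t$-structure, $P:=\phi_t\bigl(\underline{\C}_Y[n+1]\bigr)[-1]=\phi_t\underline{\C}_Y[n]$ is a perverse sheaf on $Y_0$, and $\Supp P\subseteq Z$ by the preceding step. As $Z$ is closed, $P$ is the pushforward along $\iota\colon Z\hookrightarrow Y_0$ of a perverse sheaf $Q$ on the $d$-dimensional space $Z$, so $\mathcal H^b(P)=\iota_*\mathcal H^b(Q)$ vanishes outside the range $-d\le b\le 0$. The hypercohomology spectral sequence $E_2^{a,b}=H^a(Y_0;\mathcal H^b(P))\Rightarrow\mathbb H^{a+b}(Y_0;P)$, whose $E_2$-page is concentrated in $a\ge 0$ and $-d\le b\le 0$, therefore forces $\mathbb H^q(Y_0;P)=0$ for $q<-d$. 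For the other end, the Artin--Grothendieck vanishing theorem for perverse sheaves on the Stein space $Y_0$ gives $\mathbb H^q(Y_0;P)=0$ for $q>0$ (alternatively, $\tilde H^{>n}(Y_\delta)=0$ holds trivially since $Y_\delta$ is a Stein space of complex dimension $n$). Hence $\mathbb H^q(Y_0;P)=0$ for $q\notin[-d,0]$; unwinding the shift $P=\phi_t\underline{\C}_Y[n]$ this reads $\mathbb H^m(Y_0;\phi_t\underline{\C}_Y)=0$ for $m\notin[n-d,n]$, and the isomorphism of the first paragraph turns this into $\tilde H^q(Y_\delta)=0$ for $q\notin[n-d,n]$, as claimed.

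The routine parts are the perverse $t$-exactness of the shifted vanishing cycle functor, the spectral sequence bookkeeping, and the Artin-type vanishing over a Stein base. The main obstacle is the localisation statement of the second paragraph: one must check carefully, drawing on the stability theory of multigerms, that a stable multigerm has only trivial unfoldings, that a trivial unfolding renders the image a local product, and that therefore no vanishing cohomology is created over $Y_0\setminus\Inst(f)$. A further point requiring care is that the good representative of $F$ be chosen so that $\Delta(F)=\im F$ has central fibre $\Delta(f)=\im f$ with the contractibility and retraction properties used above. Once the vanishing cycles are confined to a set of dimension $d=\dim\Inst(f)$, the perverse support estimate transfers this dimension directly into the range of non-vanishing reduced cohomology, in close parallel with the way $\dim\Sigma(g)$ governs the connectivity of the Milnor fibre in the Kato--Matsumoto theorem.
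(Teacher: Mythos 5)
Your proposal is correct and follows essentially the same route as the paper: $\Delta(F)=\im F$ is a hypersurface by Proposition \ref{propDimOfDelta}, so $\C_{\Delta(F)}[n+2]$ is perverse; the vanishing cycles of the projection to the parameter are supported in $\Inst(f)$ because triviality of the unfolding at stable points gives a local product structure (this is exactly Lemma \ref{lemSuppVanishingCycleDiscriminant}); and perversity plus $\dim\Inst(f)=d$ bounds the degrees. The only cosmetic difference is the endgame: the paper evaluates the stalk of the perverse sheaf $\phi_\pi\C_{\Delta(F)}[n]$ at the origin, where Proposition \ref{prp:stalkCohomologyBoundFromPerversity} yields both the lower and the upper bound at once (the stalk being identified with $\tilde H^{\ast}(Y_\delta)$ via contractibility of the good representative), so your extra Artin/Stein-dimension argument for the range $q>n$ is not needed.
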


In the case of germs $f\colon(\C^n,S)\to(\C^{n+1},0)$, apart from the
image $Y$, it is common to study the \emph{source double point space}
$D(f)\subseteq\C^n$. For a map $f\colon X\to Z$ between complex manifolds,
it consists of the set of points $x\in X$ such that
the germ $f\colon (X,f^{-1}(f(x)))\to(\im f,f(x))$ is not an isomorphism, that is, 
\[
  D(f)=\{x\in X\mid  f^{-1}(f(x))\neq\{x\}\}\cup\{x\in X\mid f \text{ is not immersive at $x$}\}.
\]
A point $x\in D(f)$ is called a \emph{double point} of $f$, even in the case that $f^{-1}(\{f(x)\})=\{x\}$.
Details on the analytic structure of $D(f)$ are found in Section
\ref{secProofsKMAKMB}. For now, it is enough to know the following: 
Whenever $f\colon
X\to Z$ is a finite mapping between complex manifolds with $\dim Z=\dim
X+1$ and $\dim (\Inst(f))<\dim X$, then $D(f)$ is a hypersurface
in $X$.

\begin{thm}\label{KMDoublePoints}
  Let $f\colon (\C^n,S) \to (\C^{n+1},0)$ be a finite map-germ and
  let $d=\dim(\Inst(f))$. The reduced cohomology of the source double point space of any perturbation $f_\delta$
 satisfies \[\tilde H^q(D(f_\delta))=0\text{, for any }q\notin [n-1-d,n-1].\]
\end{thm}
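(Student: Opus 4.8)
The plan is to reduce the statement to a concentration result for the multiple point spaces of $f_\delta$ and then run the perverse sheaf argument used in the proof of Theorem~\ref{KMA}. The starting point is that the source double point space is itself the image of a finite map: with the analytic structure described in Section~\ref{secProofsKMAKMB}, the reduced space $D(f_\delta)$ coincides with $\pi\bigl(D^2(f_\delta)\bigr)$, where $D^2(f_\delta)\subseteq\C^n\times\C^n$ is the double point space of $f_\delta$ and $\pi$ is the first projection, and the restriction $g_\delta:=\pi|_{D^2(f_\delta)}\colon D^2(f_\delta)\to\C^n$ is finite and generically injective, its non-injective locus being the set of triple points, of dimension $n-2$, strictly smaller than $\dim D(f_\delta)=n-1$. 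After shrinking the good representative so that the family $D(F)\to T$ of double point spaces is again a locally trivial fibration over $T\setminus\{0\}$ whose central fibre $D(f)$ is contractible and onto which $D(F)$ retracts, we are exactly in the setting of a disentanglement-type fibration, now for $g_\delta$ in place of $f_\delta$. The bridge to the machinery of Section~\ref{secMultPoints} is the canonical identification of the $k$-th multiple point space $D^k(g_\delta)$ of $g_\delta$ with $D^{k+1}(f_\delta)$, under which the $S_k$-action becomes the action of the stabilizer $S_k\subseteq S_{k+1}$ of the first of the $k+1$ coordinates.

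Granting this, the image computing spectral sequence for $g_\delta$ resolves $\C_{D(f_\delta)}$ by a complex $A^\bullet$ with $A^0=(g_\delta)_*\C_{D^2(f_\delta)}$ and, for $s\geq 1$, $A^s=\bigl(\varepsilon^{s+1}_*\C_{D^{s+2}(f_\delta)}\bigr)^{\mathrm{alt}}$, where $\varepsilon^{s+1}\colon D^{s+2}(f_\delta)\to\C^n$ is the projection onto the first coordinate and $\mathrm{alt}$ denotes the $\operatorname{sgn}$-isotypic part for $S_{s+1}\subseteq S_{s+2}$; writing $k=s+2$, this isotypic part is, by Frobenius reciprocity, the direct sum of the $\operatorname{sgn}$- and the $S^{(2,1^{k-2})}$-isotypic components of $\C_{D^k(f_\delta)}$, the two Specht modules occurring in $\operatorname{Ind}_{S_{k-1}}^{S_k}\operatorname{sgn}$. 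The heart of the argument is then the following claim, entirely parallel to the one underlying Theorem~\ref{KMA}: for each of these isotypic components the perverse sheaf on $\C^n$ obtained by shifting $\varepsilon^{k-1}_*\C_{D^k(f_\delta)}$ by $\dim D^k(f_\delta)=n+1-k$ (respectively $(g_\delta)_*\C_{D^2(f_\delta)}$, which, being a finite pushforward from a local complete intersection, is already perverse after the shift by $n-1$) differs from a shifted local system only over a closed set of dimension $\leq d$—indeed over the complement of the locus in $\C^n$ lying over $\Inst(f)$ the perturbation $f_\delta$ is stable, so all its multiple point spaces are smooth of the expected dimension and the $\varepsilon^k$ are unramified onto smooth images. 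By the relative/retraction argument of Theorem~\ref{KMA}, the reduced hypercohomology of each such term is then concentrated in $[\,n+1-k-d,\ n+1-k\,]$ (in $[\,n-1-d,\ n-1\,]$ for the $A^0$-term). Since the term built from $D^k(f_\delta)$ sits in position $k-2$ of $A^\bullet$, and the drop of dimension $n+1-k$ is exactly compensated by this positional shift, every column of the spectral sequence contributes reduced cohomology of $D(f_\delta)$ only in $[\,n-1-d,\ n-1\,]$, whence the assertion.

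The main obstacle is the perversity input together with the bookkeeping, and the two are intertwined. The fact that all contributions fall into the single band $[\,n-1-d,\ n-1\,]$ is genuinely a perversity phenomenon and is invisible to dimension counting: when $d=0$ it forces the contributions of the triple point and higher multiple point spaces---whose images can have dimension as large as $n-2$---into the single top degree $n-1$. The ``$-1$'' distinguishing this range from the range $[\,n-d,\,n\,]$ of Theorem~\ref{KMA} (and matching the suspension shift of Remark~\ref{remCorrespondenceMilnorDisentanglement}) originates in $g_\delta$ being generically injective, so that the first alternating term of its resolution appears one step later than for $f_\delta$, while the $A^0$-term is no longer acyclic. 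What must be secured, beyond what is already available for the alternating (i.e.\ $\operatorname{sgn}$-isotypic) parts from Section~\ref{secMultPoints}, is the perversity of the $S^{(2,1^{k-2})}$-isotypic components of $\varepsilon^{k-1}_*\C_{D^k(f_\delta)}$---for which one relies on the refinement of Houston's results recorded there---and the bound $d$ on the dimension of the ``defect locus'' of every multiple point space; the latter follows from stability of $f_\delta$ away from $f_\delta^{-1}(\Inst(f_\delta))$ combined with the retraction in the good representative, which converts the dimension $d+1$ of the instability locus of the family into the bound $d$ on each fibre.
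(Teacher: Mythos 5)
Your route does not go through under the hypotheses of Theorem \ref{KMDoublePoints}, because the perversity inputs it needs are neither available in the paper nor true in the stated generality. The only perversity statement recorded for multiple-point sheaves is Theorem \ref{thm:HoustonPervesity}, which concerns exclusively the sign-isotypic (alternating) parts, is stated for maps \emph{between complex manifolds}, and requires strict dimensional correctness. There is no ``refinement'' in the paper covering the $S^{(2,1^{k-2})}$-isotypic components, nor a version applicable to a map with singular source such as $g_\delta\colon D^2(f_\delta)\to\C^n$; and Theorem \ref{KMDoublePoints} carries no dimensional-correctness hypothesis at all. This is not a technicality: the germ $(x,y)\mapsto(x,y^3,y^4)$, explicitly mentioned in the paper, has a whole line of triple points, so $D^3$ has excess dimension and Houston-type perversity for $k=3$ is simply unavailable---yet the theorem must cover this germ. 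Stability of $f_\delta$ away from $f_\delta^{-1}(\Inst(f_\delta))$ only controls the multiple point spaces off a small set; it does not give the \emph{global} perversity your spectral-sequence argument requires. Your claim that $D^2(f_\delta)$ is a local complete intersection (so that the $A^0$-term is perverse after a shift) is also unjustified: in general these double point spaces are only Cohen--Macaulay, and Proposition \ref{propPerversityConstantSheafLCI} needs LCI.

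The paper's proof avoids multiple point spaces entirely and is much shorter: the statement is trivial unless $d<n-1$, and in that case $f$, hence any unfolding $F$, is finite and generically one-to-one, so by \cite{MararNunoPenafort} the \emph{source} double point space $D(F)$ is a Cohen--Macaulay hypersurface in $\C^{n+1}$. Therefore $\C_{D(F)}[n]$ is perverse, the shifted vanishing cycles $\phi_\pi\C_{D(F)}[n-1]$ of the projection $\pi\colon D(F)\to T$ form a perverse sheaf on $D(f)$, and the same triviality argument as in Lemma \ref{lemSuppVanishingCycleDiscriminant} shows its support lies in $f^{-1}(\Inst(f))$, which has dimension $d$ because $f$ is finite. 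Proposition \ref{prp:stalkCohomologyBoundFromPerversity}, together with the identification of the stalk at the origin with $\tilde H^{\,n-1+i}(D(f_\delta))$ for a good representative, then yields the band $[n-1-d,\,n-1]$ directly. If you want to keep your ICSS-for-$g_\delta$ strategy, you would have to add dimensional-correctness assumptions (and prove perversity for the extra isotypic pieces and for singular sources), which would establish a strictly weaker result than the theorem as stated.
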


\begin{rem}\label{remDoublePointMilnorNumber} Just as the disentanglements and discriminants of Example \ref{exFinDet}, the source double points space $D(f_\delta)$ of a finitely determined map germ $f\colon (\C^n,S) \to (\C^{n+1},0)$ has the homotopy type of a bouquet of spheres of dimension $n-1$. This observation is due to R. Gim\'enez Conejero and J.J. Nu\~no-Ballesteros \cite{GN2020}. 
\end{rem}

\begin{ex}\label{exDoublePointsBiGerm} Let us come back to the bi-germ of immersion
\[f\colon U_1\sqcup U_2\to \C^{n+1}\] associated to a hypersurface $X\subseteq \C^n$, described in Example \ref{exConstructionBiGerm}. Since each of the branches $f\vert_{U_i}$ is injective and immersive, the space $D(f)$ is the preimage by $f$ of the intersection of the the two branches $Y_i=\im f\vert_{U_i}$. Therefore, the source double point space consist of two disjoint copies
\[D(f)=X\sqcup X.\]
The same argument shows that the double point space of the stable perturbation of $f$ is
 \[D(f_\delta)=M\sqcup M\]
where $M$ is the Milnor fibre of $X$. From this point of view the cohomological 
version of Kato's and
Matsumoto's connectivity result is a particular case of Theorem
\ref{KMDoublePoints}. The classical connectivity result due to Milnor 
appears as a particular case of Remark \ref{remDoublePointMilnorNumber}.
\end{ex}

\begin{cor} Let $f\colon (\C^n,0) \to (\C^{n+1},0)$ be a finite map-germ and assume that $\dim(\Inst(f))<n-1$. Then, the singular locus $\Sing Y_\delta$ of the image of any perturbation of $f$ is connected.
\begin{proof} Theorem \ref{KMDoublePoints} implies immediately that $D(f_\delta)$ is connected. Moreover, from the set-theoretical description of the source double point space given above, it follows that the singular locus of the image $Y$ of a finite and generically one-to-one map $f\colon X\to Z$ between complex manifolds is, as a set, the image of the double point space. In other words, finite and generically one-to-one maps satisfy $\Sing Y=f(D(f))$, which clearly implies our claim.  Now observe that, for dimensions $\dim Y>\dim X$, stable mappings are generically one-to-one. Therefore, mappings satisfying  $\dim(\Inst(f))<n-1$ (hence generically stable) are necessarily generically one-to-one as well.
\end{proof}
\end{cor}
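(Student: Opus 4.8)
The plan is to reduce the statement to the connectivity of the source double point space $D(f_\delta)$, which is controlled by Theorem~\ref{KMDoublePoints}, and then to transfer this connectivity to $\Sing Y_\delta$ via the set-theoretic identity $\Sing Y_\delta = f_\delta(D(f_\delta))$. First I would apply Theorem~\ref{KMDoublePoints} with $d=\dim(\Inst(f))$: since $d<n-1$ by hypothesis, the interval $[n-1-d,\,n-1]$ does not contain $0$, hence $\tilde H^0(D(f_\delta))=0$, so $D(f_\delta)$ is connected (or empty, in which case $f_\delta$ is an embedding, $Y_\delta$ is smooth, and there is nothing to prove).

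Next I would establish the identity $\Sing(\im g)=g(D(g))$ for any finite, generically one-to-one map $g\colon X\to Z$ between complex manifolds with $\dim Z=\dim X+1$. For the inclusion $\Sing(\im g)\subseteq g(D(g))$ one uses that a finite map-germ $(\C^n,x)\to(\C^{n+1},y)$ with $g^{-1}(y)=\{x\}$ and $g$ immersive at $x$ is an isomorphism onto its image, so every image point over $X\setminus D(g)$ is smooth. For the reverse inclusion, over a point of $g(D(g))$ the germ of $\im g$ is either a single non-immersive branch or a union of at least two branches, and in either case it is singular because $g$ is generically one-to-one; this genericity cannot be dropped, as the two-to-one parametrization $t\mapsto(t^2,t^4)$ of a smooth parabola shows. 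To apply this with $g=f_\delta$ I would check that $f_\delta$ is generically one-to-one: the assumption $\dim(\Inst(f))<n-1$ forces $f$ to be generically stable, stable map-germs in the dimensions $(n,n+1)$ are generically one-to-one, and one verifies $\dim(\Inst(f_\delta))\le d$, so the same conclusion holds for $f_\delta$. With the identity in hand, $\Sing Y_\delta=f_\delta(D(f_\delta))$ is the continuous image of the connected set $D(f_\delta)$, hence connected.

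The hard part will be the middle step: proving $\Sing(\im g)=g(D(g))$ with proper control over the choice of representatives, and verifying that generic one-to-one-ness genuinely passes from $f$ to the perturbation $f_\delta$ (equivalently, that a good representative can be chosen so that $D(f_\delta)$ surjects onto $\Sing Y_\delta$ with degree one). Everything else is a direct application of Theorem~\ref{KMDoublePoints}.
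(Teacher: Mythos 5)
Your proposal follows essentially the same route as the paper's proof: connectivity of $D(f_\delta)$ from Theorem \ref{KMDoublePoints}, the set-theoretic identity $\Sing Y_\delta = f_\delta(D(f_\delta))$ for finite, generically one-to-one maps, and generic one-to-one-ness of $f_\delta$ deduced from generic stability in dimensions $(n,n+1)$. The extra details you flag (the empty case, the proof of the identity, and passing genericity from $f$ to $f_\delta$) are exactly the points the paper treats, only more tersely, so the argument is correct and not genuinely different.
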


In order to obtain cohomological connectivity results for germs $f\colon (\C^n,0) \to (\C^p,0)$ with arbitrary $p>n+1$, something stronger than $\cK$-finiteness is required: A map $f\colon U \to V$ between complex manifolds of dimensions $n=\dim U$ and $p=\dim V$
is \emph{dimensionally correct} if its multiple point spaces $D^k(f)$
are empty or have the minimal dimension $kn-(k-1)p$ (see Section
\ref{secMultPoints} for the definition of multiple points).

\begin{thm}\label{KM}
  Let $f\colon (\C^n,0) \to (\C^p,0)$  be a dimensionally correct
  map-germ and $p>n+1$. Then, for any perturbation $f_\delta$, all possibly
  non-trivial reduced cohomologies $\tilde H^q(Y_\delta)$ are concentrated
  in the degrees $q$ satisfying
  \[
    q= kn - (k-1) (p-1) - s,
  \]
  for  some 
  $1<k\leq \left\lfloor \frac{p}{p-n} \right\rfloor$ and $0\leq s \leq d=\dim \Inst(f)$.
 \end{thm}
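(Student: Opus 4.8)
The plan is to obtain the concentration of $\tilde H^q(Y_\delta)$ from a spectral sequence built out of the multiple point spaces $D^k(f_\delta)$ and their alternating cohomology, combined with the connectivity bounds for discriminants already established in Theorem \ref{KMB}. The starting point is the image-computing spectral sequence (the ``Goryunov--Mond'' or ``alternating multiple point'' spectral sequence): for a finite map $f_\delta$ with image $Y_\delta$ there is a spectral sequence with $E_1$-page
\[
  E_1^{k,q} \;=\; \tilde H^{q}_{\mathrm{alt}}\bigl(D^k(f_\delta)\bigr) \quad (k\ge 1), \qquad E_1^{0,q}=0,
\]
converging to $\tilde H^{k+q-1}(Y_\delta)$, where $D^k$ denotes the $k$-th multiple point space and the subscript $\mathrm{alt}$ is the isotypic component of the $\mathfrak S_k$-action corresponding to the sign representation. (This is exactly the framework announced in the introduction, attributed to Houston; the perverse-sheaf incarnation of these alternating complexes is the content of Section \ref{secMultPoints}.) Thus it suffices to show that each $\tilde H^q_{\mathrm{alt}}(D^k(f_\delta))$ is concentrated in the predicted range of degrees, and then read off the total degree $k+q-1$.

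The second step is to identify, for fixed $k$, the space $D^k(f_\delta)$ with a discriminant-type object to which Theorem \ref{KMB} applies. Since $f$ is dimensionally correct, $D^k(f)$ has the minimal dimension $kn-(k-1)p$, and this dimension is non-negative precisely when $k\le \lfloor p/(p-n)\rfloor$; for larger $k$ the multiple point spaces are empty, which explains the range $1<k\le\lfloor p/(p-n)\rfloor$ in the statement (the case $k=1$ gives $D^1=\C^n$, contributing nothing to reduced cohomology). The map $f_\delta$ restricted to $D^k$ (more precisely, its ``divided'' or quotient map onto the $k$-fold point locus in the target) is itself a $\cK$-finite — indeed finite, in the relevant dimension regime — map between the appropriate ambient manifolds, and the dimension of its instability locus is bounded by $d=\dim\Inst(f)$, because instabilities of the restricted map come from instabilities of $f$. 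Applying Theorem \ref{KMB} (or its image-version Theorem \ref{KMA}, depending on the codimension) to this restricted map shows that $\tilde H^q(D^k(f_\delta))$ — and then, by taking the sign-isotypic summand, also $\tilde H^q_{\mathrm{alt}}(D^k(f_\delta))$, since passing to an isotypic component of a finite group action cannot enlarge the support of cohomology — vanishes outside an interval of the form $[\,kn-(k-1)(p-1)-d,\ kn-(k-1)(p-1)\,]$. Writing $q = kn-(k-1)(p-1)-s$ with $0\le s\le d$ is precisely this interval.

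The third step is bookkeeping: the spectral sequence converges to $\tilde H^{k+q-1}(Y_\delta)$, but one must check that the differentials cannot ``spread'' cohomology into degrees outside the claimed set, and that no cancellation is needed — here only the $E_1$-level vanishing is used, since a class surviving to $E_\infty$ in bidegree $(k,q)$ forces a nonzero $E_1^{k,q}$, hence $q$ of the stated form. A mild subtlety is that different pairs $(k,q)$ contribute to the same total degree $k+q-1$, so the final statement is phrased as ``$\tilde H^q(Y_\delta)\ne 0$ implies $q=kn-(k-1)(p-1)-s$ for some admissible $(k,s)$'' rather than a single interval; no further argument is needed for that.

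The main obstacle I expect is the second step: namely, correctly identifying $D^k(f_\delta)$ (with its alternating cohomology) as the discriminant (or image) of a genuine $\cK$-finite map to which Theorem \ref{KMB} applies \emph{with the correct bound on the instability-locus dimension}. This requires (i) knowing that dimensional correctness of $f$ is inherited in a suitable form by the iterated multiple-point maps, (ii) controlling $\Inst$ of the restricted map in terms of $\Inst(f)$, and (iii) the compatibility of the sign-isotypic truncation with the vanishing statement. Points (i) and (iii) are standard once the machinery of Section \ref{secMultPoints} is set up; point (ii) is where the geometric input about where instabilities of a multigerm sit — essentially the fact that a point of $D^k(f)$ is unstable only if the corresponding multigerm of $f$ is unstable — must be invoked carefully.
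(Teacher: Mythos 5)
Your overall skeleton (image-computing spectral sequence, concentration of the $E_1$-entries, no need to analyze differentials since only $E_1$-vanishing is used) matches the paper, but the step you yourself flag as the main obstacle is a genuine gap, and the fix you propose does not work. You want to bound $\tilde H^q(D^k(f_\delta))$ (and then its sign-isotypic part) by applying Theorem \ref{KMB} or \ref{KMA} to a ``restricted'' or ``divided'' map on $D^k$. But those theorems only apply to maps with \emph{smooth} source $(\C^n,S)$ and target dimension $p\leq n+1$, precisely because in that range the discriminant (resp.\ image, resp.\ $D(F)$) is a hypersurface, so the shifted constant sheaf is perverse. For $k\geq 2$ and $p>n+1$ the spaces $D^k(f_\delta)$ are in general singular, are not local complete intersections, and the map $\varepsilon^k$ to $\C^p$ has image of codimension greater than one; neither the discriminant nor the image version of the earlier theorems applies, and there is no available statement giving concentration of the \emph{full} cohomology of $D^k(f_\delta)$ in this generality. (There is also an internal bookkeeping slip: if $E_1^{k,q}$ converges to $\tilde H^{k+q-1}(Y_\delta)$, the interval you need for $q$ is $[kn-(k-1)p-d,\,kn-(k-1)p]$, i.e.\ with $p$, not $p-1$; with your stated interval the total degrees come out wrong.)

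The paper's route avoids exactly this problem by never bounding the cohomology of $D^k(f_\delta)$ itself. Instead it works with the alternating pushforward sheaves $\Alt\,\varepsilon^k_*\C_{D^k(F)}$ on the image $\mathcal Y$ of the unfolding $F$: by Proposition \ref{propDimCorrectUnfolding} the unfolding is again dimensionally correct, so Houston's theorem (Theorem \ref{thm:HoustonPervesity}, via Lemma \ref{lem:AlternatingSheavesAreEqual}) says $\Alt\,\varepsilon^k_*\C_{D^k(F)}[kn-(k-1)p+1]$ is perverse; the shifted vanishing-cycle functor for $\pi\colon\mathcal Y\to T$ preserves perversity; Lemma \ref{lem:EstimateSupportVanishingCycles} (triviality of the unfolding at stable points plus the product structure of multiple point spaces) shows the support of $\phi_\pi(\Alt\,\varepsilon^k_*\C_{D^k(F)})$ lies in $\Inst(f)$, of dimension $d$; and then Proposition \ref{prp:stalkCohomologyBoundFromPerversity} gives the concentration of the $E_1$-entries in Lemma \ref{lem:SpectralSequenceForAlternatingCohomology}, from which the theorem follows as you indicate in your third step. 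So to repair your argument you would have to replace your second step by this perverse-sheaf input (or something equivalent to it); the reduction to Theorems \ref{KMB}/\ref{KMA} cannot supply it.
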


As the following example shows, Theorem \ref{KM} does not hold if
the dimensionally correct hypothesis is omitted.

\begin{ex}\label{exNonDimCorrect}By adding a zero coordinate function to the cuspidal edge of Example \ref{exCuspEdge}, we obtain the germ $f'\colon(\C^2,0)\to(\C^4,0)$ given by
  \[
    (x,y)\mapsto(x,y^2,y^3,0).
  \] 
Similarly, adding a zero coordinate to the perturbation of Example \ref{exCuspEdge} gives a perturbation
  $f'_\delta$, given by
  \[
    (x,y)\mapsto(x,y^2,y^3+\delta y(x^2-\delta),0),
  \]
whose image is, obviously, isomorphic to that of the mentioned
example. This image has non-trivial cohomology in degree two.
If $f'$ was dimensionally correct, then Theorem \ref{KM} 
would allow for nontrivial vanishing cohomology only in degrees 
$0$ and $1$. Indeed, this condition on $f'$ is violated 
since the source double point space $D(f')$
contains the whole line $\{y=0\}$ ($f'$ is not immersive there) whereas, for
germs $(\C^2,0)\to(\C^4,0)$, the minimal dimension for double points is
$2\cdot 2-1\cdot 4=0$.
\end{ex}

\begin{rem} 
  A first version of Theorems \ref{KMA} and \ref{KM} was proven in
  \cite{PZ}, under the extra assumptions that $f_\delta$ is stable, that $f$ has
  corank one and---in the case of $p=n+1$---that $f$ is a dimensionally correct monogerm. Two examples of map-germs that only the new version can handle are $(x,y)\mapsto(x^2,y^2,x^3+y^3+xy)$ (which has corank two) and $(x,y)\mapsto(x,y^3,y^4)$ (which is not dimensionally correct, since it has a line of triple points, but satisfies the hypotheses of Theorem \ref{KMA}).
\end{rem}

Finally, let us return to the study of monodromy.
Let  $F$ be a one-parameter unfolding of a finite map-germ $f\colon (\C^n,0)\to (\C^p,0)$, with $p>n$. The projection from the image $\mathcal Y$ of a good representative of $F$ to the deformation parameter restricts to a locally trivial fibration
\[\mathcal Y^*\to D^*\] over the punctured disk. Since the disentanglement $Y_\delta$ is the fibre of the previous fibration, for each $i$ there is corresponding monodromy automorphism
\[h^i\colon H^i (Y_\delta,\C) \to H^i (Y_\delta,\C).\]
On one hand, the fact that $f$ is finite implies that $\mathcal Y$
is analytic, and thus it is clear that the eigenvalues of $h^i$ are roots
of the unity, cf. Remark \ref{rem:RootsOfUnity}. On the other hand $\mathcal Y$ is
not smooth, (in fact $\mathcal Y$ need not be a complete intersection
when $p>n+1$) and thus the classical result about the size of Jordan
blocks does not apply in our setting. As we will see, under the right
hypothesis the classical bound for the Jordan blocks and a spectral
sequence argument can be combined to bound the size of the Jordan blocks
of the monodromy of perturbations.

\begin{thm} \label{monodromy} 
Let $F$ be a stable one-parameter unfolding of a corank one map germ $f\colon (\C^n,0) \to (\C^p,0)$ with $p>n$. Then
the size of the Jordan blocks of $h^i$ are at most $i(i+1)/2$. 
Moreover, if $f$ has only isolated instability locus and $p>n+1$, the possibly non-trivial reduced cohomologies  are $\tilde{H}^{kn-(k-1)(p-1)}(Y_\delta)$, for $1< k \leq  \left\lfloor \frac{p}{p-n} \right\rfloor$, and the size of the corresponding Jordan block is at most $kn-(k-1)p+1$. 
\end{thm}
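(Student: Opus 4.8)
The plan is to exploit the key multiple-point decomposition for images of corank-one stable perturbations: when $f$ has corank one, the image $Y_\delta$ of a stable perturbation carries an image-computing spectral sequence (the Mond–Goryunov–Houston spectral sequence) whose $E_1$-page is built from the alternating cohomology of the multiple point spaces $D^k(f_\delta)$, and which degenerates, up to the relevant bookkeeping, in a controlled way. First I would recall the construction of this spectral sequence, with $E_1^{k,q} = \tilde H^q_{\mathrm{alt}}(D^k(f_\delta))$ (suitably indexed), converging to $\tilde H^{*}(Y_\delta)$; this is already implicit in the machinery assembled for Theorem~\ref{KM}. Crucially, the monodromy on $Y_\delta$ is induced by a compatible monodromy on the tower of spaces $D^k(f_\delta)$, so it acts on the whole spectral sequence, and a morphism of spectral sequences respecting the monodromy lets us bound Jordan blocks on the abutment in terms of Jordan blocks on the pages, with a penalty governed by the number of nonzero differentials one must pass through.

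Next I would analyze each $D^k(f_\delta)$ separately. For a stable unfolding of a corank-one germ, $D^k(f_\delta)$ is itself (a perturbation of) something that, after the standard reductions, is the Milnor fibre of an isolated complete intersection singularity (ICIS) — or a bouquet of spheres in the finitely-determined case — of dimension $kn - (k-1)(p-1)$; its monodromy is then the classical ICIS monodromy, to which the Milnor–Lê Jordan-block bound applies. In the isolated-instability case this is exactly where the statement ``the only possibly non-trivial cohomology is $\tilde H^{kn-(k-1)(p-1)}(Y_\delta)$'' comes from: each $D^k(f_\delta)$ is a bouquet of spheres of that dimension, the $E_1$-page is concentrated in those bidegrees, all differentials vanish for dimension reasons, and the spectral sequence degenerates at $E_1$; hence the monodromy on $\tilde H^{kn-(k-1)(p-1)}(Y_\delta)$ is (a summand of) the alternating part of the classical monodromy on the ICIS $D^k(f_\delta)$ of dimension $kn-(k-1)p$, whose Jordan blocks have size at most $kn-(k-1)p+1$ by the classical monodromy theorem. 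That gives the ``moreover'' clause cleanly, with no spectral-sequence penalty at all.

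For the general corank-one case (no isolated-instability hypothesis), the argument is the same in spirit but the spectral sequence no longer degenerates at $E_1$: non-trivial reduced cohomology of $Y_\delta$ can appear in degrees $q = kn-(k-1)(p-1) - s$ with $0 \le s \le d$ (Theorem~\ref{KM}), coming from the $k$-th column, and a class in $\tilde H^q(Y_\delta)$ is detected on some $E_r$-page as a subquotient of $E_1^{k,\,q-\text{(shift)}}$. I would then combine two estimates. On the pages, the relevant $E_1$-entry is an alternating subspace of $H^*(D^k(f_\delta))$ in cohomological degree at most $i$ (where $i=q$), so by the classical monodromy theorem its Jordan blocks have size at most $i+1$; this follows because $D^k(f_\delta)$, for a corank-one stable perturbation, is still a Milnor fibre of a hypersurface or ICIS, so the Lê–Milnor bound governs its monodromy. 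Then one tracks how many differentials $d_r$ a surviving class must be a cycle/non-boundary through: since the columns contributing lie in the range $1 < k \le \lfloor p/(p-n)\rfloor$ and the spread in the secondary index is at most $d$, the number of pages on which a given total-degree-$i$ class can be ``assembled'' is bounded linearly in $i$, and each stage of the filtration can increase the Jordan-block size additively by at most one (the standard lemma that an extension of $h$-modules with Jordan blocks of sizes $a$ and $b$ has blocks of size at most $a+b$, applied iteratively along a filtration of length $\le i$). Summing the contributions $1 + 1 + \cdots$ over the $\le i$ filtration steps, or more precisely tallying $\sum_{j=1}^{i} 1$ against the base bound, yields the triangular bound $i(i+1)/2$. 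I would phrase this last bookkeeping as: an $h$-stable filtration $0 = F_0 \subseteq F_1 \subseteq \cdots \subseteq F_\ell = H^i(Y_\delta)$ with $\ell \le i$ and each graded piece $F_j/F_{j-1}$ a subquotient of some $\tilde H^{m}_{\mathrm{alt}}(D^k(f_\delta))$ with $m \le i$, hence with Jordan blocks of size $\le m+1 \le i+1$; iterating the extension lemma gives size $\le \ell(i+1)$, which after the sharper count (the $j$-th graded piece sits in cohomological degree $\le j$, contributing $\le j$) telescopes to $\sum_{j=1}^i j = i(i+1)/2$.

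The main obstacle I anticipate is making the filtration/degree bookkeeping precise: one must verify that the image-computing spectral sequence is genuinely $h$-equivariant (this needs the monodromy to be realized by an actual homeomorphism of the whole tower $D^\bullet(f_\delta)$ compatible with the simplicial structure that produces the $E_1$-differentials — available here because $F$ is a \emph{stable} unfolding, so everything is constructed from the discriminant/image of a single stable map), and that the graded pieces of the induced filtration on $H^i(Y_\delta)$ indeed sit in alternating cohomology of $D^k(f_\delta)$ in cohomological degree at most $i$ with the indexing that makes the ``$\le j$ in the $j$-th step'' estimate correct. Once the spectral sequence is set up $h$-equivariantly and the identification of $D^k(f_\delta)$ with a Milnor fibre (of a hypersurface for $p = n+1$, of an ICIS for $p > n+1$, cf. Section~\ref{secMultPoints}) is in place, the Jordan-block bounds are a formal consequence of the classical monodromy theorem plus the elementary extension lemma.
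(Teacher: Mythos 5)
Your proposal is correct and follows essentially the same route as the paper: the monodromy-equivariant (i.e.\ $\C[s,s^{-1}]$-module) image-computing spectral sequence, the fact that for a corank-one stable unfolding the spaces $D^k(F)$ are smooth of expected dimension so each $D^k(f_\delta)$ is the Milnor fibre of a function germ and the classical monodromy theorem bounds Jordan blocks on the $E_1$-entries in degree $i$ by $i+1$, then summing these bounds over the (at most $i$-step) filtration of $\tilde H^i(Y_\delta)$ to get $i(i+1)/2$, with concentration and $E_1$-degeneration giving the sharper bound in the isolated-instability case with $p>n+1$. Two small slips to fix: $D^k(f_\delta)$ has dimension $kn-(k-1)p$ (not $kn-(k-1)(p-1)$, and in the non-isolated case it is the Milnor fibre of a possibly non-isolated hypersurface singularity on the smooth $D^k(F)$ rather than of an ICIS), and passing through differentials to later pages costs nothing since subquotients cannot enlarge Jordan blocks --- the additive penalty comes only from the extensions in the filtration of the abutment, exactly as your final bookkeeping asserts.
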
  


\subsection{Some remarks on generality}In writing this paper, we decided to sacrifice some generality for the sake of clarity. The following remarks list some improvements that ended up being left behind: 

\begin{rem}
Throughout the text, the instability locus can be replaced by the \emph{topological instability locus}. Topological stability is defined by using homeomorphisms instead of diffeomorphisms in the definition of trivial unfolding. In general, the topological instability locus is smaller, giving rise to sharper cohomological connectivity bounds. 
\end{rem}

\begin{rem}
 Theorem \ref{KM} is stated for monogerms, but it applies to multigerms as well. 
 However, the involved representations of the symmetric group made 
the exposition more complicated and seemed off-topic.
\end{rem}

\begin{rem}
The ``dimensionally correct'' condition of Theorem \ref{KM} may be relaxed by introducing  \emph{strict} dimensional correctness and adding some extra hypotheses (counterintuitively, strict dimensional correctness is less demanding than dimensional correctness, see Definition \ref{defStrictlyDimCorrect}). The cohomological connectivity predicted by  Theorem \ref{KM} also holds in the following situations:
\begin{enumerate}
\item The unfolding $F$ underlying $f_\delta$ is strictly dimensionally correct.
\item The germ $f$ is strictly dimensionally correct, and $Y_\delta$ is a disentanglement (i.e. $f_\delta$ is a stable perturbation).
\end{enumerate}
The second situation is, in fact, a particular case of the first, because stable perturbations of strictly dimensionally correct germs arise only from strictly dimensionally correct unfoldings. The reason not to include this improvement is that it depends on the unfolding $F$, not only on $f$. Example \ref{exNonDimCorrect} shows that  perturbations $f_\delta$ of a strictly dimensionally correct map-germ $f$ need not satisfy the cohomological connectivity of Theorem \ref{KM}.
\end{rem}

We thank David Massey, who inspired us to use perverse sheaves as a tool
for understanding perturbations of map-germs. We also thank Juan Jos\'e
Nu\~no Ballesteros for useful discussions about $\cK$-finiteness. The 
third author would like to thank the Singularity group at BCAM in 
Bilbao for their kind hospitality.


\section{Preliminaries}\label{secPreliminaries}
Here we include definitions related to stability, properties involving $\cK$-finiteness and the basic notions of the theory of perverse sheaves which are used to show our results.
\subsection{Stability and $\cK$-finiteness}\label{secStability}

An unfolding $F\colon (\C^n\times\C^r,S\times\{0\})\to (\C^p,0)$ of a germ
$f$ is \emph{trivial} if there exist an unfolding $\Phi$ of $(\id_{\C^n},S)$
and an unfolding $\Psi$ of $(\id_{\C^p},0)$ 
such that the following
diagram commutes:
\[
\begin{tikzcd}
(\C^n\times\C^r,S\times\{0\}) \arrow{r}{F} \arrow{d}{\Phi}
&(\C^p\times \C^r,(0,0))\arrow{d}{\Psi}\\
(\C^n\times\C^r,S\times\{0\})\arrow{r}{f\times \id_{\C^r}}& (\C^p\times \C^r, (0,0)).
\end{tikzcd}
\]
Note that these conditions make $\Phi$ and $\Psi$ into germs of biholomorphism.

A germ $f$ is \emph{stable} if every unfolding of $f$ is trivial. A
$\cK$-finite map $f\colon M\to N$ is stable at $q\in N$ if the germ of $f$
at $S= \Sigma(f)\cap f^{-1}(q)$ is stable. We say that the map
$f$ is stable if it is $\cK$-finite and it is stable at every $q\in N$.

A one-parameter unfolding $F$ is called a \emph{stabilisation} of
$f$ if it admits a good representative such that for every non-zero
$\delta\in T$ the perturbations $f_\delta$ are stable. In this case, $f_\delta$ is called a \emph{stable
perturbation} of $f$, and $Y_\delta=\im f_\delta$ is called a
\emph{disentanglement} of $Y$.

The \emph{instability locus} of $f\colon M\to N$ is the support 
\[\Inst(f)=\supp \frac{\theta(f)}{T\cA_e (f)},\]
where $\theta(f)$ is the sheaf of vector fields along $f$, and $T\cA (f)$ is the extended $\cA$-tangent space of $f$, see \cite{WallFiniteDeterminacyOfSmoothMapGerms}. What gives its name to the instability locus is the following result from 
  \cite[Theorem 3.2]{MNB20}:
\begin{prop}
  \label{propInstContainsTheInstabilities}
  Let $f\colon M\to N$ be a $\cK$-finite map, let $q\in N$ and $S=f^{-1}(q)\cap \Sigma(f)$. If $q\notin\Inst(f)$, then the germ of $f$ at $S$ is stable.
\end{prop}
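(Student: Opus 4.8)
The plan is to identify the hypothesis ``$q\notin\Inst(f)$'' with infinitesimal stability of the multigerm of $f$ at $S$, and then to invoke Mather's stability theorem. (In the notation of \cite{WallFiniteDeterminacyOfSmoothMapGerms}, write $tf\colon\theta_{\C^n}\to\theta(f)$, $\xi\mapsto df\circ\xi$ and $\omega f\colon\theta_{\C^p}\to\theta(f)$, $\eta\mapsto\eta\circ f$, so that $T\cA_e(f)=tf(\theta_{\C^n})+\omega f(\theta_{\C^p})$.)

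By definition $\Inst(f)=\supp\bigl(\theta(f)/T\cA_e(f)\bigr)$, the support being that of the quotient, regarded as an $\mathcal O_N$-module concentrated on $\Delta(f)$. I would first record that this sheaf is concentrated, in the source, on $\Sigma(f)$: at a point $x$ where $df_x$ is surjective --- which forces $n\geq p$ --- the homomorphism $tf$ already maps $\theta_{\C^n}$ onto $\theta(f)$ in a neighbourhood of $x$, by Nakayama's lemma, so its stalk at $x$ vanishes; and when $n<p$ one has $\Sigma(f)=\C^n$, so there is nothing to check. Since $f$ is $\cK$-finite, $f|_{\Sigma(f)}$ is finite, so $S=f^{-1}(q)\cap\Sigma(f)$ is a finite set and the stalk of $\theta(f)/T\cA_e(f)$ at $q$ is canonically the normal module
\[
  N\cA_e(f;S)\;=\;\theta(f;S)\big/\bigl(tf(\theta_{\C^n,S})+\omega f(\theta_{\C^p,q})\bigr)
\]
of the multigerm of $f$ at $S$; the remaining points of $f^{-1}(q)$ are submersion points and contribute nothing, since their $\theta$-component already lies in the image of $tf$. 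Hence $q\notin\Inst(f)$ is equivalent to $N\cA_e(f;S)=0$, i.e.\ to $\theta(f;S)=T\cA_e(f;S)$, which says precisely that the multigerm of $f$ at $S$ is \emph{infinitesimally stable}.

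The proof is then completed by Mather's theorem: for $\cK$-finite holomorphic (multi)germs, infinitesimal stability is equivalent to stability in the sense of Section~\ref{secStability}; see \cite{MatherIII,WallFiniteDeterminacyOfSmoothMapGerms,MNB20}. Thus the germ of $f$ at $S$ is stable, as claimed.

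I expect the only delicate point to be the identification of the stalk $\bigl(\theta(f)/T\cA_e(f)\bigr)_q$ with the multigerm normal module $N\cA_e(f;S)$. When $n<p$ --- where $\cK$-finiteness means $f$ is finite --- this is transparent because $f^{-1}(q)$ is itself finite; in the case $n\geq p$ one genuinely uses $\cK$-finiteness to localise the computation of the stalk to the finite fibre of $\Sigma(f)$ over $q$, taking care that the term $\omega f(\theta_{\C^p,q})$ couples the points of $S$ diagonally, exactly as in the definition of the normal module of a multigerm. Everything else is a formal consequence of the definitions together with the cited form of Mather's criterion.
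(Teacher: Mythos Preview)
The paper does not supply its own proof of this proposition; it is quoted verbatim as \cite[Theorem 3.2]{MNB20} and used as a black box. Your argument is correct and is essentially the standard one underlying that reference: identify the stalk of $\theta(f)/T\cA_e(f)$ at $q$ with the $\cA_e$-normal module of the multigerm at $S$, deduce infinitesimal stability from the vanishing of that stalk, and conclude via Mather's equivalence of infinitesimal stability and stability.

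Two small comments on presentation. First, when $n\geq p$ the fibre $f^{-1}(q)$ is generically $(n-p)$-dimensional, so speaking of ``the remaining points of $f^{-1}(q)$'' is slightly misleading; what you really use is that the sheaf $\theta(f)/tf(\theta_M)$ is supported on $\Sigma(f)$, so its pushforward along $f$ coincides with the pushforward along the finite map $f|_{\Sigma(f)}$, whence the stalk at $q$ decomposes over the finite set $S$. Second, Mather's theorem (infinitesimal stability $\Leftrightarrow$ stability) does not itself require a $\cK$-finiteness hypothesis on the germ; you only need $\cK$-finiteness of the ambient map to ensure $S$ is finite and the stalk identification goes through. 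Neither point affects the validity of the argument.
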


Introducing the instability locus as the support of a sheaf allows us to justify that it is an analytic space. The proof of the following proposition can be extracted from that of 
\cite[Proposition 4.2]{MNB20}:

\begin{prop}\label{propKFiniteImpliesInstAnalytic}
If $f$ is $\cK$-finite, then $\theta(f)/(T_e\cA (f))$ is a coherent $\cO_Y$-module. In particular, $\Inst(f)$ is analytic.
\end{prop}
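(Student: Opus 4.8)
The statement to prove is Proposition~\ref{propKFiniteImpliesInstAnalytic}: if $f$ is $\cK$-finite, then $\theta(f)/(T_e\cA(f))$ is a coherent $\cO_Y$-module, and consequently $\Inst(f)$ is analytic.

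\medskip

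The plan is to reduce the coherence statement to standard finiteness theorems in local analytic geometry, following the argument of \cite[Proposition 4.2]{MNB20}. First I would recall that $\theta(f)$, the sheaf of vector fields along $f$, is naturally an $\cO_N$-module (via composition with $f$) which is finitely generated as such — indeed it is free of rank $p=\dim N$ over $\cO_N$ after choosing coordinates on $N$, pulled back along $f$, so it is a coherent $\cO_N$-module supported on the image. The extended tangent space $T_e\cA(f) = tf(\theta_M) + \omega f(\theta_N)$ is the sum of the image of $\theta_M = \cO_M^n$ under $tf$ (composition of a source vector field with $df$) and the $\cO_M$-submodule (hence $\cO_N$-submodule, since $\cO_M$ is finite over $\cO_N$ on the relevant locus) generated by the pullbacks of coordinate vector fields on $N$. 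The key point is that the quotient $\theta(f)/T_e\cA(f)$ is a priori only an $\cO_N$-module, but one checks it is annihilated by the ideal of the image $Y$, hence is an $\cO_Y$-module; what must be shown is that it is \emph{finitely generated} over $\cO_Y$, equivalently over $\cO_N$.

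\medskip

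The decisive step is where $\cK$-finiteness enters. By Definition~2.1, $\cK$-finiteness means $f|_{\Sigma(f)}$ is a finite map; equivalently (the geometric characterization cited from \cite[Proposition 4.3]{MNB20}) the module $\theta(f)/T\cK_e(f)$ — or, at the level of ideals, $\cO_M/(f^*\mathfrak{m}_N \cdot \cO_M + \text{Jacobian-type ideal})$ — is finite over $\cO_N$. The hard part will be passing from $\cK$-finiteness (a statement about the contact tangent space $T\cK_e(f)$) to finiteness of the cokernel of the $\cA$-tangent map. The bridge is the inclusion $T\cK_e(f) \supseteq T\cA_e(f)$ together with the fact that $T\cK_e(f)/T\cA_e(f)$ is controlled: concretely, $T\cK_e(f) = tf(\theta_M) + f^*(\mathfrak{m}_N)\theta(f)$ while $T\cA_e(f) = tf(\theta_M) + \omega f(\theta_N)$, and $\theta(f)/(tf(\theta_M) + f^*\mathfrak{m}_N\theta(f))$ being finite over $\C$ (which is what $\cK$-finiteness gives at a point, by the geometric criterion) propagates to a neighborhood by coherence/upper-semicontinuity, so the sheaf $\theta(f)/(tf(\theta_M)+f^*\mathfrak{m}_N\theta(f))$ is a coherent $\cO_N$-module with finite (0-dimensional, or at worst finite over $N$) fibers. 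Then $\theta(f)/T_e\cA(f)$ is a quotient of this by the image of $\omega f(\theta_N) \pmod{f^*\mathfrak{m}_N\theta(f)}$, hence still finitely generated over $\cO_N$; since it is killed by the ideal of $Y$, it is a coherent $\cO_Y$-module. I expect the main technical obstacle to be verifying carefully that the pointwise finiteness supplied by $\cK$-finiteness spreads out to an open neighborhood as a genuine coherence statement — this is the standard but delicate passage from stalk-wise finite generation to sheaf coherence, for which one invokes Oka's coherence theorem and the fact that $\theta(f)$ and the relevant submodules are built from coherent sheaves by finitely many algebraic operations (images, sums, quotients).

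\medskip

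Finally, for the last sentence — that $\Inst(f)$ is analytic — I would simply invoke that the support of a coherent $\cO_Y$-module on an analytic space is an analytic subset (the annihilator ideal is coherent, hence defines an analytic subspace whose underlying set is the support). Since $\Inst(f)$ is by definition $\supp\big(\theta(f)/T_e\cA(f)\big)$ and $Y$ is itself analytic (being the image of the finite, hence proper, map $f$ — Remmert's theorem), the coherence established above immediately yields that $\Inst(f)$ is a closed analytic subset of $Y$, and a fortiori of $N$. This last step is routine; essentially all the content is in the coherence claim and its reduction to the $\cK$-finiteness hypothesis.
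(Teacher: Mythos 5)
Your reduction to $\cK$-finiteness breaks at its central step. The inclusion $T\cA_e(f)\subseteq T\cK_e(f)$ you invoke is false in general ($\omega f(\theta_N)$ contains the pullbacks of the constant vector fields on $N$, which lie neither in $tf(\theta_M)$ nor in $f^*\mathfrak{m}_N\,\theta(f)$), and --- more damagingly --- so is the claim that $\theta(f)/T\cA_e(f)$ is a quotient of $\theta(f)/T\cK_e(f)$ by the image of $\omega f(\theta_N)$: that quotient is $\theta(f)/\bigl(T\cA_e(f)+f^*\mathfrak{m}_N\,\theta(f)\bigr)$, and $f^*\mathfrak{m}_N\,\theta(f)$ is not contained in $T\cA_e(f)$ for a non-stable germ. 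So your argument, even if completed, would establish coherence of a strictly smaller module than the one in the statement, not of $\theta(f)/T\cA_e(f)$; and recovering even the weaker conclusion that $\Inst(f)$ is analytic from that smaller quotient would require Mather's preparation-theorem criterion relating it to stability, which you never bring in. There are also structural slips feeding into this: $\theta(f)$ is free of rank $p$ over the \emph{source} ring $\cO_M$, not over $\cO_N$ --- as an $\cO_N$-module it is not finitely generated unless $f$ itself is finite, which is not assumed when $n\geq p$ --- and $\omega f(\theta_N)$ is the $\cO_N$-submodule generated by the $\partial/\partial y_j\circ f$, not an $\cO_M$-submodule (the $\cO_M$-span of these fields is all of $\theta(f)$).

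The missing idea is the finite mapping theorem, which is exactly where the geometric form of $\cK$-finiteness enters in the argument the paper extracts from \cite[Proposition 4.2]{MNB20} (the paper offers no proof beyond this pointer). One works with the coherent $\cO_M$-module $\mathcal{F}=\theta(f)/tf(\theta_M)$, whose support is contained in $\Sigma(f)$ because $tf$ is surjective wherever $df$ is. $\cK$-finiteness says precisely that $f|_{\Sigma(f)}$, hence $f|_{\supp\mathcal{F}}$, is finite, so Grauert's finite mapping theorem makes $f_*\mathcal{F}$ a coherent $\cO_N$-module; then $\theta(f)/T\cA_e(f)$ is the cokernel of the $\cO_N$-linear map $\theta_N\to f_*\mathcal{F}$ induced by $\omega f$, hence coherent over $\cO_N$, and after checking that the ideal of $Y$ acts trivially one obtains a coherent $\cO_Y$-module. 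Your proposed substitute --- pointwise finite dimensionality ``propagating by upper semicontinuity'' together with Oka's theorem --- only yields coherence over $\cO_M$: finite-dimensional fibres of a coherent $\cO_M$-module do not make its pushforward $\cO_N$-coherent unless the map is finite on the support, which is precisely the hypothesis that must be used and which your sketch never deploys. Your final step (the support of a coherent module is analytic) is correct as stated.
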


We finish this subsection with a result which is well known, but whose proof we include for lack of
a reference.  

\begin{prop}\label{propDimOfDelta} 
  Let $f\colon X\to Z$  be a $\cK$-finite map between complex manifolds, 
  let $n=\dim X$ and $p=\dim Z$. If $p\leq n+1$,
  then the dimension of $\Delta(f)$ is $p-1$.
\end{prop}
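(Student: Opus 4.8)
The plan is to analyze the dimension of $\Delta(f)$ through the finite map $f|_{\Sigma(f)}\colon \Sigma(f)\to \Delta(f)$, which exists by $\cK$-finiteness, so that $\dim \Delta(f) = \dim \Sigma(f)$. The first step is to bound $\dim \Sigma(f)$ from below by $n-1$. Recall that $\Sigma(f)$ is the vanishing locus of the ideal of maximal minors of the Jacobian matrix $\D f$, which is a $p\times n$ matrix (when $p\le n$) or an $n\times p$ matrix; in either case $\Sigma(f)$ is a determinantal variety defined by the condition $\operatorname{rank}(\D f) < p$, i.e. by the vanishing of finitely many $p\times p$ minors. By the classical bound on the codimension of determinantal varieties, the expected codimension of $\Sigma(f)$ in $X$ is $(n-p+1)(p-p+1) = n-p+1$, hence $\dim \Sigma(f)\ge n-(n-p+1) = p-1$, provided $\Sigma(f)$ is nonempty. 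When $p\le n$ we have $p-1<n$, so $\Sigma(f)$ is a proper subset and its dimension is thus at most $n-1$; combined with a lower bound this pins things down. When $p=n+1$ the differential is never surjective, so $\Sigma(f)=X$ has dimension $n = p-1$, and the claim is immediate in that case.

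The second step is the upper bound $\dim \Sigma(f)\le p-1$ in the case $p\le n$. This is exactly where $\cK$-finiteness enters essentially: the restriction $f|_{\Sigma(f)}$ is a finite map onto its image $\Delta(f)\subseteq Z$, so $\dim \Sigma(f) = \dim \Delta(f)\le \dim Z = p$. To improve $\le p$ to $\le p-1$, I would argue that $\Delta(f)$ cannot be all of a $p$-dimensional germ: over the complement of $\Delta(f)$ the map $f$ is a submersion, and if $\Delta(f)$ had dimension $p$ it would contain an open set, forcing $f$ to fail to be a submersion on an open set — but the set where $\operatorname{rank}(\D f)=p$ is open and dense in $X$ (its complement $\Sigma(f)$ being a proper analytic subset by the genericity of maximal rank, or by the lower-bound computation above showing $\Sigma(f)\ne X$), and the image of this dense open set is dense in the image of $f$, hence $f(X)$ has nonempty interior-type behavior forcing $\Delta(f)$ to be nowhere dense in $Z$. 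Therefore $\dim \Delta(f)\le p-1$, and hence $\dim \Sigma(f)\le p-1$.

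Combining the two steps gives $\dim \Sigma(f) = p-1$ and therefore $\dim \Delta(f) = \dim \Sigma(f) = p-1$, using once more the finiteness of $f|_{\Sigma(f)}$ for the last equality.

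\textbf{Main obstacle.} The delicate point is establishing that $\Sigma(f)$ is nonempty and has dimension exactly $p-1$ rather than something smaller — for a general map one only gets the inequality $\dim \Sigma(f)\ge p-1$ from the determinantal codimension bound, and equality requires knowing that the determinantal ideal actually cuts out a set of the expected codimension. Here I would use that a $\cK$-finite germ with $p\le n$ is genuinely singular along $\Sigma(f)$ in the expected way; more concretely, one can reduce to showing that $f$ cannot be an immersion-up-to-finiteness on a set of codimension less than $n-p+1$, which follows because a $\cK$-finite map with $p\le n$ is surjective onto a $p$-dimensional target (stated in the introduction), so $\Delta(f) = f(\Sigma(f))$ must be a genuine hypersurface-type locus. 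If one wants to avoid the surjectivity input, the cleanest route is: pick a generic linear projection $Z\to \C^{p-1}$ and a generic point of $\Delta(f)$, and show the fiber of $f$ over it is non-reduced or multiple, exhibiting a $(p-1)$-dimensional family of such points. In any case the lower bound is the crux; the upper bound $\le p-1$ is comparatively soft once finiteness of $f|_{\Sigma(f)}$ is in hand.
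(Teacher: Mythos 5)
Your overall route is the same as the paper's: dispose of $p=n+1$ directly, transfer dimension through the finite map $f|_{\Sigma(f)}$, and get the lower bound $\dim\Sigma(f)\ge n-(n-p+1)=p-1$ from the classical codimension bound for determinantal loci (the paper cites Eagon--Northcott for exactly this). The genuine gap is in your upper bound. The paper obtains $\dim\Delta(f)\le p-1$ from Sard's theorem: the set of critical values has measure zero, so the analytic set $\Delta(f)$ cannot be all of $Z$, hence has dimension at most $p-1$. Your substitute argument does not deliver this. First, you assert that the locus where $\operatorname{rank}(\D f)=p$ is open and dense ``by the genericity of maximal rank''; this is not a general fact for holomorphic maps (the differential can be degenerate everywhere), and your lower-bound computation does not show $\Sigma(f)\ne X$ either---it only bounds $\dim\Sigma(f)$ from below. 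Ruling out $\Sigma(f)=X$ for a $\cK$-finite map with $p\le n$ itself requires a Sard-type input. Second, even granting that the regular locus is dense in $X$, the inference ``the image of this dense open set is dense in the image of $f$, hence $\Delta(f)$ is nowhere dense'' is a non sequitur: a dense set of images of regular points is, a priori, perfectly compatible with the critical values filling an open subset of $Z$ (points of such an open set may also have regular preimages). Note also that for $p<n$, finiteness of $f|_{\Sigma(f)}$ together with properness of $\Sigma(f)$ only gives $\dim\Delta(f)=\dim\Sigma(f)\le n-1$, which is weaker than $p-1$; no soft density consideration closes that gap. The repair is exactly the paper's argument: $\Delta(f)$ is analytic (image of the analytic set $\Sigma(f)$ under the finite map $f|_{\Sigma(f)}$), and by Sard it has measure zero, hence dimension at most $p-1$.

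Two smaller points. Your opening claim that the first step bounds $\dim\Sigma(f)$ from below by $n-1$ is a slip; the computation you actually perform gives $p-1$. And your ``main obstacle'' assessment inverts the difficulty: the lower bound needs no further work beyond the Eagon--Northcott bound (equality follows from combining it with the upper bound, not from showing the determinantal ideal cuts out the expected codimension), whereas the upper bound is precisely the step that genuinely needs Sard.
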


\begin{proof}
  The case $p=n+1$ is clear, because
  then $\Sigma(f)=X$, the map is finite and $\Delta(f)=\im f$. We may thus
  assume $q\leq n$. Since $f$ is $\cK$-finite, $\Delta(f)$ is an analytic
  space of the same dimension as $\Sigma(f)$. On the one hand, 
  Sard's Theorem implies
  that $\Delta(f)$ is not all of $Z$, hence its dimension is at
  most $p-1$. On the other hand, $\Sigma(f)$ is defined as the vanishing 
  locus of $p$-minors of an $n\times p$-matrix with $p \leq n$. 
  The results in \cite{EagonNorthcott62} imply that the dimension of 
  any component of $\Delta(f)$ is greater or equal to 
  $n - (n - p + 1 ) = p-1$. 
\end{proof}

\subsection{Perverse sheaves}

In this subsection, we will summarize those parts of the 
machinery of perverse sheaves on complex analytic spaces which 
we shall need for the proof of our theorems. We follow 
the standard reference \cite{D2} and all the details 
can be found there. 

Throughout this section, $X$ stands for a complex analytic variety. 
For a sheaf $\mathscr F$ on $X$ its
\textit{sheaf cohomology groups} will be denoted by 
\[
  H^i(X; \mathscr F).
\]
We write $\C_X$ for the 
constant sheaf on $X$ associated to the field $\C$.
Recall that the cohomology of $\C_X$ is isomorphic to the singular cohomology 
with complex coefficients:
\[
  H^\bullet(X;\C_X) \cong H_\sing^\bullet(X).
\]

By $C(X)$ we denote the category of complexes of sheaves of $\C_X$-modules on  $X$:
\[
 \mathscr F^\bullet :\qquad \cdots \to \mathscr F^{i-1} \to \mathscr F^i \to \mathscr F^{i+1} \to \cdots
\]
From any such complex of sheaves we obtain the collection of 
\textit{cohomology sheaves}, denoted by 
\[
  \mathcal H^i( \mathscr F^\bullet), \quad i\in \Z.
\]
A morphism of complexes of sheaves $\varphi^\bullet \colon \mathscr F^\bullet \to \mathscr G^\bullet$
is called a \textit{quasi-isomorphism}, if the induced maps on the cohomology sheaves 
\[
  \varphi \colon \mathcal H^i(\mathscr F^\bullet) \to \mathcal H^i(\mathscr G^\bullet)
\]
is an isomorphism for all $i\in\Z$. 
By $\mathcal F^\bullet[d]$ we will denote the \textit{shift by} $d$ of the complex 
$\mathcal F$ which is given by the terms 
\[
  \left( \mathcal F^\bullet[d] \right)^k = \mathcal F^{k-d}
\]
for every $k$ together with the appropriately shifted differentials from $\mathcal F$. 

By $D(X)$ we denote the \textit{derived category} of sheaves of $\C_X$-modules on $X$ 
which is obtained from $C(X)$ by \textit{localizing} at the set of 
quasi-isomorphisms.
In particular this construction entails that two complexes of sheaves $\mathscr F^\bullet$
and $\mathscr G^\bullet$ are \textit{isomorphic} in $D(X)$ if and only if they are 
\textit{quasi-isomorphic} in $C(X)$. 

A complex of sheaves $\mathscr F^\bullet$ is in the \textit{bounded} derived 
category $D^b(X)$ if its nontrivial cohomology sheaves $\mathcal H^i(\mathscr F^\bullet)$ 
are confined to a bounded range of indices $i$.
For any such complex of sheaves $\mathscr F^\bullet \in D^b(X)$ we can 
define the \emph{support} as 
\[
  \supp\mathscr F^\bullet=\overline{\{x\in X\mid  
    \mathcal H^i( \mathscr F^\bullet)_x\neq 0 \text{ for some }i\in \Z\}}
\]
where we denote by $\mathcal H^i(\mathscr F^\bullet)_x$ the stalk of the cohomology 
sheaf $\mathcal H^i(\mathscr F^\bullet)$ at the point $x\in X$.

\medskip
A sheaf $ \mathscr F$ on $X$ can be regarded as a complex of sheaves concentrated in degree 0. 
This is a fully faithful embedding of the category of sheaves on $X$ into the 
bounded derived category, cf. \cite[Proposition 1.3.3 iii]{D2}.  This allows us to simplify notation 
and write $\mathscr F$ instead of $\mathscr F^\bullet$ for the complex 
of sheaves associated to a sheaf $\mathscr F$.


Under this embedding into the bounded derived category, 
classical sheaf cohomology reappears as 
\textit{hypercohomology}. More generally, for a continuous map 
$f \colon X \to Y$ of topological spaces and a sheaf $\mathscr F$ on 
$X$ one has 
\[
  R^i f_* \mathscr F \cong 
  \mathcal H^i( R f_* \mathscr F )
\]
where the left hand side denotes the $i$-th derived pushforward of a single sheaf 
and 
\[
  R f_* \colon D^b(X) \to D^b(Y)
\]
denotes the derived pushforward in the derived categories, cf. \cite[Section 2.3]{D2}.
For the special case of a projection $p \colon X \to \{pt\}$ to a point, one obtains 
\[
  H^i(X,\mathscr F) = 
  R^i p_* \mathscr F = 
  \mathcal H^i( R p_* \mathscr F )
  = \mathbb H^i( X, \mathscr F )
\]
where 
\[
  \mathbb H^i( X, - ) \colon D^b(X) \to \C\text{-Vect}
\]
is the $i$-th \textit{hypercohomology} functor.

Again for a map $f \colon X \to Y$ and a complex of sheaves $\mathscr F$ on $X$
the hypercohomology functors satisfy the following fundamental property:
\[
\mathbb H^i(Y, R f_*\mathscr F)=\mathbb H^i(X, \mathscr F),
\]
the \textit{Leray spectral sequence}, see \cite[Proposition 2.3.4]{D2}.

Similar translations of the above identifications between the category of 
$\C_X$-modules and the derived category exist for relative cohomology for 
pairs of spaces; see \cite{D2} for details.

\medskip

We shall say that a complex of sheaves $\mathscr F \in D^b(X)$ 
on a \textit{complex analytic} space $X$ 
is \emph{constructible} if there exists a complex analytic stratification of $X$ 
with locally closed \emph{complex analytic}
strata $S_\alpha \subset X$ such that 
\begin{enumerate}
  \item 
    The restriction of $\mathscr F$ 
    to each one of the $S_\alpha$ has locally constant cohomology sheaves, 
  \item 
    All stalks are finite dimensional $\C$-vector spaces. 
\end{enumerate}
The category of bounded constructible complexes of sheaves 
on $X$ will be denoted by $D_c^b(X)$. 

From now on, we will mostly be interested in a very special type of 
bounded complexes of constructible sheaves: 
the \textit{perverse sheaves} for the middle perversity. Note that a perverse sheaf need not be a sheaf, but a complex of sheaves.
It is our intention for the reader to accept ``being perverse'' 
as a good property which can be used without knowing its details. 
We proceed by stating the necessary properties and results for the 
intended usage.

\begin{prop} 
  \label{prp:stalkCohomologyBoundFromPerversity} \cite[Remark 5.1.19]{D2}
  Let $\sF$ be a perverse sheaf on a complex analytic variety $X$,  and let $d$ be the dimension of the support of $\sF$. For any point $x \in X$, the stalk cohomology groups satisfy
  \[
    \mathcal H^i(\sF)_x = 0 \textnormal{, for all } i\notin [-d, 0].
  \]
\end{prop}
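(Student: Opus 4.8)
The plan is to extract the two ends of the range $[-d,0]$ from the two conditions defining a perverse sheaf for the middle perversity. The bound $i\le 0$ is the formal half: the support condition requires $\dim\supp\mathcal H^j(\sF)\le -j$ for every $j\in\Z$, and for $j>0$ the right-hand side is negative, so $\mathcal H^j(\sF)=0$; in particular $\mathcal H^i(\sF)_x=0$ for all $i>0$ and all $x\in X$, with no reference to $d$.

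For the bound $i\ge -d$ one needs both defining conditions, and I would argue by induction on $d=\dim\supp\sF$. (By biduality, and because $\mathbb D_X\sF$ is perverse with the same support, this lower bound on stalk cohomology is equivalent to the upper bound $\mathcal H^j(i_x^!\mathbb D_X\sF)=0$ for $j>d$ on costalks, so one may work with whichever of the two is more convenient.) The base case $d=0$ is immediate: a perverse sheaf supported in dimension $0$ is a finite direct sum of skyscrapers $i_{x*}V_x$, and the support and cosupport conditions force each $V_x$ to be concentrated in degree $0$. For $d\ge 1$ one uses the gluing (recollement) description of the perverse $t$-structure: fix a stratification adapted to $\sF$ and, working on $\supp\sF$, let $U$ be the open dense locus of top dimension $d$, which is smooth and on which $\sF$ has locally constant cohomology sheaves; there $\sF$ has cohomology concentrated in the single degree $-d$, so its stalks at points of $U$ lie in degree $-d$ and the bound holds. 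Let $Z'=\supp\sF\setminus U$, of dimension $<d$. The recollement triangle $i_{Z'*}i_{Z'}^!\sF\to\sF\to Rk_*k^*\sF\xrightarrow{+1}$ attached to the open inclusion $k\colon U\hookrightarrow\supp\sF$, combined with the cosupport condition along the strata of $Z'$ and the inductive hypothesis applied to $\sF|_U$, then propagates the bound to the points of $Z'$.

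The delicate step --- and the one I expect to be the main obstacle --- is precisely this propagation through the recollement triangle: it is here that the cosupport half of the definition of perversity enters in an essential way, together with the local geometry of complex analytic spaces (Artin--Lefschetz-type vanishing for perverse sheaves on small, respectively punctured, Stein neighborhoods). Since this is a completely standard feature of the perverse $t$-structure on a complex analytic variety, for the purposes of the present paper the statement may simply be quoted as \cite[Remark 5.1.19]{D2}.
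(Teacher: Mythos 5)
The paper does not prove this proposition at all: it is quoted verbatim from \cite[Remark 5.1.19]{D2}, in keeping with the authors' stated policy of using the perverse-sheaf machinery as a black box, so the last sentence of your proposal is exactly the route the paper takes. Your sketch of an actual proof is, in outline, the standard argument and is essentially correct: the bound $i\le 0$ follows from the support condition alone, and the lower bound follows by restricting to the support, observing that on the open dense smooth top-dimensional locus $U$ both halves of perversity force concentration in the single degree $-d$, and then propagating over the smaller-dimensional remainder.

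Two points would need tightening if you wanted the sketch to stand on its own. First, the induction as you set it up runs over perverse sheaves, but the object it must be applied to in the triangle $i_{Z'*}i_{Z'}^!\sF\to\sF\to Rk_*k^*\sF$ is $i_{Z'}^!\sF$, which is only an object of ${}^pD^{\ge 0}(Z')$ and in general not perverse (and the inductive hypothesis is not what handles $\sF|_U$, whose support has full dimension $d$; that piece is handled directly by the degree $-d$ concentration). The standard repair is to strengthen the induction to the statement that any $\sG\in{}^pD^{\ge 0}(Z)$ on a variety of dimension $e$ satisfies $\mathcal H^j(\sG)=0$ for $j<-e$, equivalently to induct over the strata using the costalk conditions $\mathcal H^j(i_S^!\sF)=0$ for $j<-\dim_{\C} S$; with that formulation your recollement step goes through. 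Second, the step you flag as delicate does not in fact require Artin--Lefschetz-type vanishing on Stein neighborhoods: for the stalk of $Rk_*k^*\sF$ at a point of $Z'$ one only needs the trivial observation that hypercohomology of any space with coefficients in a complex concentrated in degrees $\ge -d$ vanishes in degrees $<-d$. Artin-type vanishing would be relevant for bounding such hypercohomology from above, which is not needed here. With these adjustments your argument is the standard proof of the cited fact; for the purposes of the paper, quoting \cite[Remark 5.1.19]{D2}, as you and the authors both do, is sufficient.
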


Note that since by definition any perverse sheaf is constructible, 
its support is always a closed \emph{complex analytic} subset and it makes 
sense to speak of its (complex) dimension. 

\begin{prop}\label{propPerversityConstantSheafLCI} \cite[Theorem 5.1.20]{D2}
  \label{prp:PerversityOfConstantSheafOnCompleteIntersections}
  If $X$ is a locally complete intersection of complex dimension $d$, 
  then the shifted constant sheaf $\C_X[d]$ is perverse.
\end{prop}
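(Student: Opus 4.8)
The plan is to verify directly the two conditions that define a perverse sheaf. Recall that a constructible complex on $X$ is perverse precisely when it lies in both ${}^{p}D^{\leq 0}(X)$ (the support condition) and ${}^{p}D^{\geq 0}(X)$ (the cosupport condition), the latter being equivalent to the Verdier dual satisfying the support condition. The support condition for $\C_X[d]$ is immediate, since its only non-vanishing cohomology sheaf is $\mathcal H^{-d}(\C_X[d])=\C_X$, whose support is all of $X$ and $\dim X=d=-(-d)$; this uses nothing beyond $\dim X=d$. So the whole content is to show $\C_X[d]\in {}^{p}D^{\geq 0}(X)$. Since perversity is local, I would pass to a germ and assume $X=V(g_1,\dots,g_c)\subseteq U$ is cut out in an open $U\subseteq\C^N$ by a regular sequence with $d=N-c$, and then induct on the codimension $c$.

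For the base case $c=0$ the space $X=U$ is a complex manifold, so Poincar\'e duality makes $\C_U[d]$ Verdier self-dual; having already checked the support condition, $\C_U[d]$ is perverse. For the inductive step I would put $X'=V(g_1,\dots,g_{c-1})$, a locally complete intersection of dimension $d+1$ for which $\C_{X'}[d+1]$ is perverse by the inductive hypothesis, and use that $g:=g_c|_{X'}$ is a non-zero-divisor on $X'$---this is exactly the property built into the notion of a regular sequence---so that $X=g^{-1}(0)$ is a Cartier divisor in $X'$; write $i\colon X\hookrightarrow X'$. The geometric input is the canonical distinguished triangle $i^{*}\C_{X'}\to\psi_g\C_{X'}\to\phi_g\C_{X'}\xrightarrow{+1}$ of \cite{D2} relating the nearby and vanishing cycles of $g$. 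Since $i^{*}\C_{X'}=\C_X$, shifting by $[d]$ exhibits $\C_X[d]$ inside a distinguished triangle whose remaining two terms are the suitably shifted nearby and vanishing cycles of the perverse sheaf $\C_{X'}[d+1]$. I would then invoke the fact (Gabber; see \cite{D2}) that the appropriately shifted nearby- and vanishing-cycle functors are $t$-exact for the perverse $t$-structure, so those two terms are perverse sheaves, and conclude from the long exact sequence of perverse cohomology attached to the triangle that ${}^{p}\mathcal H^{j}(\C_X[d])=0$ for all $j<0$. Combined with the support condition, this gives perversity and closes the induction.

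The main obstacle is not the shift-and-dimension bookkeeping but the two imported ingredients on which it rests: the perverse $t$-exactness of the shifted nearby- and vanishing-cycle functors, and, at the outset, the commutative-algebra fact that a locally complete intersection of the expected dimension is genuinely cut out by a regular sequence, so that each $g_j$ really is a non-zero-divisor on $V(g_1,\dots,g_{j-1})$ and the inductive dimension count is valid. If one wishes to avoid nearby cycles, an alternative I would keep in reserve is to shrink $U$ so that $X'$ is Stein; then the complement $U'=X'\setminus X$ of the hypersurface $X$ is again Stein, so for the open immersion $j\colon U'\hookrightarrow X'$ the functors $Rj_{*}$ and $j_{!}$ are perverse $t$-exact by Artin's vanishing theorem, and feeding $\C_{X'}[d+1]$ into the excision triangle $j_{!}j^{*}\C_{X'}[d+1]\to\C_{X'}[d+1]\to i_{*}\C_X[d+1]\xrightarrow{+1}$ places $\C_X[d]$ in ${}^{p}D^{\geq 0}(X)$ exactly as before.
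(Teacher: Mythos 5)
Your proposal is correct, but it is worth pointing out that the paper itself contains no proof of this proposition: it is quoted directly from [D2, Theorem 5.1.20], where Dimca deduces it from the Hamm--L\^e theorem on rectified homological depth of local complete intersections. Your argument is the other standard route, and its steps check out. The support condition is indeed immediate. The inductive set-up is sound: writing $X=V(g_1,\dots,g_c)$ locally in $U\subseteq\C^N$ with $d=N-c$, one has $\dim V(g_1,\dots,g_j)=N-j$ for every $j$ (Krull's principal ideal theorem gives the lower bound, and the fact that one equation drops the dimension of each component by at most one gives the upper bound), so each intermediate space is again a local complete intersection and the induction on $c$ is well founded; the full strength of the regular-sequence statement is not even needed for the topological step. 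The triangle $i^{-1}\C_{X'}\to\psi_g\C_{X'}\to\phi_g\C_{X'}\xrightarrow{+1}$ is exactly the convention adopted in this paper ($\phi_g$ as the cone of the comparison morphism, cf.\ (\ref{eqn:vanishingCycleFunctorStalk})), so after shifting by $d$ the two right-hand terms are perverse by the t-exactness of $\psi_g[-1]$ and $\phi_g[-1]$ --- the latter is precisely Theorem \ref{thm:vanishingCycleFunctorPreservesPerversity}, and the nearby-cycle statement is in the same theorem of [D2] --- and the long exact sequence of perverse cohomology kills ${}^{p}\mathcal H^{j}(\C_X[d])$ for $j<0$, as you say. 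Your reserve variant via the excision triangle and Artin-type vanishing for hypersurface complements also works, after rotating the triangle so that $i_{*}\C_X[d]$ appears on the left. As for the comparison: Dimca's cited proof rests on Hamm--L\^e's Lefschetz-type depth theorem, while yours rests on the (comparably deep) t-exactness of the shifted nearby/vanishing-cycle functors; your version has the appeal of using exactly the machinery this paper assembles for its main theorems, at the cost of importing the nearby-cycle half of that t-exactness, which the paper never states explicitly.
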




Let $g: \cX \to \C$ be a holomorphic function defined on a complex
analytic variety and set $X=g^{-1}(0)$. Associated to $g$, there is the
\emph{vanishing cycle functor} \[\phi_g : D^b_c(\cX)\to D^b_c(X) .\]

 For any point $x\in X$ and any $\sF\in D^b_c(\cX)$, the
stalk cohomology of $\phi_g\mathscr F$ can be computed as follows
(see e.g. \cite[page 106 (4.1)]{D2}): 
\begin{equation} 
  \label{eqn:vanishingCycleFunctorStalk}
  \mathcal H^i (\phi_g \sF)_{x} = \mathbb H^{i+1}( B_{r}, B_{r}
  \cap g^{-1}(\{\delta\}); \sF),
\end{equation} 
where $B_{r}$ is a sufficiently small open ball in $\cX$
centered at $x$ and $\delta \in \C\setminus  \{ 0\}$ is small enough
with respect to $r$. The last result we need is essential, as it allows
to produce new perverse sheaves from old:

\begin{thm}
  \label{thm:vanishingCycleFunctorPreservesPerversity} \cite[Theorem 5.2.21]{D2} 
  The vanishing cycle functor shifted by one $\phi_g [-1]\colon D^b_c(\cX)\to
  D^b_c(X)$  takes perverse sheaves on $\cX$ to perverse sheaves on $X$.
\end{thm}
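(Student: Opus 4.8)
The plan is to deduce this from the theorem that the perversely normalized nearby cycle functor $\psi_g[-1]$ is $t$-exact for the middle perversity (often attributed to Gabber, and recorded in \cite{D2}), thereby reducing the statement about $\phi_g[-1]$ to a formal distinguished-triangle argument. Write $i\colon X\hookrightarrow \cX$ for the closed embedding and let $\psi_g\colon D^b_c(\cX)\to D^b_c(X)$ denote the nearby cycle functor, normalized as in \cite{D2} so that $\psi_g[-1]$ is its perverse version. First I would record the canonical functorial distinguished triangle
\[
  i^*\sF[-1]\longrightarrow \psi_g[-1]\sF \xrightarrow{\ \mathrm{can}\ } \phi_g[-1]\sF \xrightarrow{\ +1\ }
\]
together with the elementary facts that, for a closed embedding $i$, the functor $i^*$ is right $t$-exact and $i^!$ is left $t$-exact; hence $i^*\sF\in {}^pD^{\le 0}(X)$ and $i^!\sF\in {}^pD^{\ge 0}(X)$ whenever $\sF$ is perverse on $\cX$.

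Granting the $t$-exactness of $\psi_g[-1]$, the complex $\psi_g[-1]\sF$ is perverse, so its perverse cohomology sheaves ${}^p\mathcal H^k$ vanish for $k\ne 0$, while those of $i^*\sF[-1]$ vanish for $k\ge 2$. Taking the long exact sequence of perverse cohomology of the triangle above and chasing it in degrees $k\ge 1$ then forces ${}^p\mathcal H^k(\phi_g[-1]\sF)=0$ for all $k\ge 1$, i.e. $\phi_g[-1]\sF\in {}^pD^{\le 0}(X)$. For the opposite bound I would invoke that $\phi_g[-1]$ commutes with Verdier duality $\mathbb D$ (again \cite{D2}): since $\mathbb D$ exchanges ${}^pD^{\le 0}(X)$ with ${}^pD^{\ge 0}(X)$ and carries perverse sheaves on $\cX$ to perverse sheaves, applying the previous step to $\mathbb D\sF$ and dualizing gives $\phi_g[-1]\sF\in {}^pD^{\ge 0}(X)$. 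Alternatively, one obtains the same conclusion by running the analogous chase on the second canonical triangle, the one built from the variation map and involving $i^!\sF$ and $\psi_g[-1]\sF$. Either way, the two inclusions together say that $\phi_g[-1]\sF$ is perverse.

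The real content, and the only genuinely hard part, is the input: the $t$-exactness of $\psi_g[-1]$ for the middle perversity. Everything downstream is a mechanical triangle chase. A self-contained proof of that input would require either reproducing Gabber's argument or directly verifying the support and cosupport conditions for $\psi_g[-1]\sF$ — fixing a Whitney stratification of $X$ adapted to $g$, describing the stalks and costalks of nearby cycles through the local Milnor fibrations (cf. the analogue of formula \eqref{eqn:vanishingCycleFunctorStalk} for $\psi_g$), and bounding their cohomological amplitude by stratified Morse theory. This is precisely the technical machinery the paper chooses to import from \cite{D2}, so for our purposes it is reasonable to treat Theorem \ref{thm:vanishingCycleFunctorPreservesPerversity} as a black box.
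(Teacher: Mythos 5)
The paper itself offers no proof of this statement: it is imported verbatim from \cite[Theorem 5.2.21]{D2} and used as a black box, so there is no internal argument to compare yours against. Your sketch is the standard textbook derivation and is correct as far as it goes: granting the $t$-exactness of $\psi_g[-1]$ (Gabber), the chase in the triangle $i^*\sF[-1]\to\psi_g[-1]\sF\to\phi_g[-1]\sF\to$ does give $\phi_g[-1]\sF\in{}^pD^{\le 0}(X)$, since right $t$-exactness of $i^*$ kills ${}^p\mathcal H^{k}(i^*\sF[-1])$ for $k\ge 2$ and perversity of $\psi_g[-1]\sF$ kills the middle term in degrees $k\ge 1$; and the opposite bound follows either from the variation triangle involving $i^!\sF$ or from duality. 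One caveat on the duality route: the commutation of $\phi_g[-1]$ with Verdier duality is itself a nontrivial statement that is usually proved by splitting into unipotent and non-unipotent parts and involves a monodromy twist; this is harmless for perversity but should be cited with the same care as the main theorem, so the variation-triangle argument is the cleaner of your two options. What your route buys over the paper's treatment is that it isolates precisely which deep input is needed (the $t$-exactness of the perverse nearby cycle functor) and shows that everything else is a formal triangle chase; what it does not buy is self-containedness, since that input is of essentially the same depth as the cited theorem — so, as you say yourself, for the purposes of this paper treating Theorem \ref{thm:vanishingCycleFunctorPreservesPerversity} as a black box is the same decision the authors made.
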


The ingredients we have introduced can be put together to give a simple proof of the cohomological connectivity of Milnor fibers: 

 Let $g$ be a suitable representative of a holomorphic map germ
 $(\C^{n+1},0) \to (\C,0)$ defined on some open subset $\mathcal X
 \subset \C^{n+1}$.  Take the perverse sheaf  $\C_{\mathcal X}[n+1]$
 and apply the shifted vanishing cycle functor associated to $g$ to
 obtain a perverse sheaf on $X = g^{-1}(0)$. The stalk cohomology at a point $x\in X$
 recovers the reduced cohomology of the Milnor fibre of the germ of $X$
 at $x$ as follows:
\begin{eqnarray*}
  \mathcal H^i (\phi_g  \C_{\mathcal{X}}[n])_{x}&=& \mathbb
   H^{i+1}(B_r, B_r \cap g^{-1}(\delta); \C_{\mathcal X}[n])\\
   &=& H^{i+n+1}(B_r, B_r \cap g^{-1}(\delta); \C_{\mathcal X} ) \\ &=&
  H_\sing^{i+n+1}(B_r, B_r \cap g^{-1}(\delta) ) \\ &=&
  \tilde H_\sing^{i+n}(B_r \cap g^{-1}(\delta) ).
\end{eqnarray*}

Since the Milnor fibre of $g$ at a regular point  of the function has trivial
reduced cohomology, we deduce that the support of $\phi_g
\C_{\mathcal{X}}[n]$ is contained in $\Sing X$. Letting $d=\dim(\Sing
X)$, Proposition \ref{prp:stalkCohomologyBoundFromPerversity} gives a bound on the stalk
cohomologies at the origin 
  \[
    \mathcal H^i\left(\phi_g \C_{\mathcal{X}}[n]\right)_0 \cong 
    \tilde H^{i+n}_{\text{sing}}(B_r \cap g^{-1}(\delta)) 
    = 0 \quad \text{ for all } i \notin [-d,0]
  \]
which turns out to be the desired cohomological connectivity
result for the Milnor fiber of $g$.

As the reader will see, the proofs of our results follow the same
pattern: Find an appropriate perverse sheaf and apply the vanishing cycle
functor associated to the projection to the parameter space. By virtue
of Proposition \ref{propDimOfDelta}, an estimate on the dimension of the
support of the sheaf will directly translate to a bound on the non-zero
degrees of the reduced cohomology of a nearby object.



\section{Proof of Theorems \ref{KMB}, \ref{KMA} and \ref{KMDoublePoints}}\label{secProofsKMAKMB}
Recall that, for $p=n+1$,  finiteness and $\cK$-finiteness are
the same and $Y=\Delta (f)$.  Consequently, Theorem \ref{KMB} and
Theorem \ref{KMA} can be considered two instances of the same result for $\cK$-finite germs
 $f\colon (\C^n,0)\to(\C^p,0)$ with $p\leq
n+1$. For a $\cK$-finite germ $f$, any unfolding $F$
is $\cK$-finite and, since $p\leq n+1$, the discriminant $\Delta(F)$ is a
hypersurface in $\C^{p}\times \C$, by Proposition \ref{propDimOfDelta}. In
particular, the shifted constant sheaf $\C_{\Delta(F)}[p]$  is perverse. Projecting on the unfolding parameter gives a family 
\[
  \Delta(F)\stackrel{\pi}\longrightarrow T,
\] 
and one checks the fiber over $\delta\in T$ to be $\Delta(f_\delta)$. Applying
the associated vanishing cycle functor  gives a perverse sheaf
$\phi_\pi \C_{\Delta(F)}[p-1]$ on $\Delta(f)$.
 
\begin{lem}\label{lemSuppVanishingCycleDiscriminant}
  The support of $\phi_\pi\C_{\Delta(F)}[p-1]$ is contained 
  in $\Inst(f)$.
\end{lem}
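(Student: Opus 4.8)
The plan is to compute the stalk cohomology of $\phi_\pi\C_{\Delta(F)}[p-1]$ at an arbitrary point $y \in \Delta(f)$ and show it vanishes whenever $y \notin \Inst(f)$. By the stalk formula \eqref{eqn:vanishingCycleFunctorStalk} for the vanishing cycle functor, applied to the function $\pi$ (whose zero locus is $\Delta(f) = \pi^{-1}(0)$), we have
\[
  \mathcal H^i\left(\phi_\pi\C_{\Delta(F)}[p-1]\right)_y = \mathbb H^{i+p}\left(B_r, B_r \cap \pi^{-1}(\delta); \C_{\Delta(F)}\right) = H^{i+p}\left(B_r \cap \Delta(F), B_r \cap \Delta(f_\delta)\right),
\]
where $B_r$ is a small ball around $(y,0)$ in $\C^p \times \C$ and $\delta$ is small with respect to $r$. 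So it suffices to show that, for $y \notin \Inst(f)$, the inclusion of the local perturbed discriminant $B_r \cap \Delta(f_\delta)$ into the local total discriminant $B_r \cap \Delta(F)$ induces isomorphisms on all cohomology groups; in fact it is enough to exhibit a deformation retraction, or more robustly to show the local fibration $\pi \colon B_r \cap \Delta(F) \to D_\delta$ is trivial near $y$.

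First I would invoke Proposition \ref{propInstContainsTheInstabilities}: since $y \notin \Inst(f)$, the multigerm of $f$ at $S = f^{-1}(y) \cap \Sigma(f)$ is stable. The key point is that a stable germ admits no non-trivial unfoldings, so the one-parameter unfolding $F$, when restricted to a suitable neighborhood of $S \times \{0\}$ in the source and of $(y,0)$ in the target, is a \emph{trivial} unfolding of the germ of $f$ at $S$. Concretely, there are germs of biholomorphisms $\Phi$ of $(\C^n \times \C, S \times \{0\})$ and $\Psi$ of $(\C^p \times \C, (y,0))$ fitting into the triviality diagram of Section \ref{secStability}, with $\Psi$ respecting the projection to the parameter $T$ (this last normalization is standard and can be arranged). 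Such a $\Psi$ carries $\Delta(F)$ to $\Delta(f) \times T$ near $(y,0)$ and intertwines $\pi$ with the projection $\Delta(f) \times T \to T$. Choosing a good representative compatibly, this shows that on a small ball $B_r$ around $(y,0)$ the pair $(B_r \cap \Delta(F), B_r \cap \Delta(f_\delta))$ is homeomorphic to a product pair $((\text{cone on something}) \times D_\delta, (\text{same}) \times \{\delta\})$, hence the inclusion $B_r \cap \Delta(f_\delta) \hookrightarrow B_r \cap \Delta(F)$ is a homotopy equivalence and the relative cohomology above vanishes in every degree. Therefore $\mathcal H^i(\phi_\pi\C_{\Delta(F)}[p-1])_y = 0$ for all $i$ and all $y \notin \Inst(f)$, which is precisely the claim that $\supp \phi_\pi\C_{\Delta(F)}[p-1] \subseteq \Inst(f)$.

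The main obstacle I anticipate is the bookkeeping needed to pass from the \emph{germ}-level triviality of the unfolding (which is what stability gives) to a statement about an actual \emph{representative} on a fixed ball $B_r$ with $\delta$ small relative to $r$, while matching this with the good representative $F$ fixed at the outset. One has to check that the biholomorphisms $\Phi, \Psi$ can be taken to preserve the projection to $T$ — equivalently, that triviality can be realized within the category of unfoldings over $T$ rather than merely abstractly — and that shrinking to achieve the product structure near $y$ is consistent with the already-chosen good representative. This is the kind of standard but slightly delicate Milnor–Lê-type argument about compatible shrinking of neighborhoods; it is routine in singularity theory but is the only place where genuine care is required. Everything else — the stalk formula, the translation to relative singular cohomology, the homotopy invariance — is formal.
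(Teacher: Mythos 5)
Your proposal is correct and follows essentially the same route as the paper: take $y\notin\Inst(f)$, use Proposition \ref{propInstContainsTheInstabilities} to get stability of the germ of $f$ at $S=f^{-1}(y)\cap\Sigma(f)$, deduce triviality of $F$ near $(y,0)$, and conclude that the stalk of the vanishing cycles vanishes because the local pair $(\Delta(F)\cap B_r,\,\pi^{-1}(\delta)\cap B_r)$ becomes a product pair $((\Delta(f)\times\C)\cap B_r,\,\Delta(f)\cap B_r)$ with trivial relative cohomology. The normalization you flag as delicate --- that $\Psi$ respects the projection to $T$ --- is automatic here, since by the paper's definition $\Psi$ is an unfolding of $(\id_{\C^p},0)$ and hence already of the form $(y,t)\mapsto(\psi_t(y),t)$.
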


\begin{proof}
 

 Given a point  away from the instability locus $y \in \Delta(f) \setminus \Inst(f)$,  
  $S:=f^{-1}(y)\cap \Sigma(f)$ is a finite set because $f$ is $\cK$-finite. 
  By Proposition \ref{propInstContainsTheInstabilities},
  the germ $f \colon (U,S) \to (V,y)$
  of $f$ at $S$ is stable and thus the unfolding 
  \[
    F \colon (U \times T, S\times \{0\}) \to (V \times T,(y,0)) 
  \]
  is a trivial unfolding. Consequently, there exist unfoldings $\Phi$ and $\Psi$
  of the identity mappings $\id_U$ and $\id_V$, respectively, such that the following diagram commutes:
\[
\begin{tikzcd}
(U \times T, S \times \{0\} ) \arrow{r}{F} \arrow{d}{\Phi}
&(V \times T, (y,0))\arrow{d}{\Psi}\\
 (U,S) \times (T,0)\arrow{r}{f\times \id_{T}}&  (V,y) \times (T,0).
\end{tikzcd}
\]
The stalk at $y$ 
  of the sheaf of vanishing cycles of $\pi$ on $\Delta(F)$ is 

  \begin{eqnarray*}
    \mathcal H^{i}(\phi_\pi\C_{\Delta(F)}[p-1])_{y} &=& 
    \mathbb H^{i+1}(\Delta(F)\cap B_r, \pi^{-1}(\delta)\cap B_r; 
      \C_{\Delta(F)}[p-1]) 
    \\
    & \cong & 
    \mathbb H^{i+1}((\Delta(f)\times \C)\cap B_r, \Delta(f) \cap B_r; 
    \C_{\Delta(f)\times \C}[p-1])
    \\
    &=& H^{i+p}((\Delta(f) \times \C) \cap B_r, \Delta(f) \cap B_r; \C )\\
    &=& 0,
  \end{eqnarray*}
  which finishes the proof.
\end{proof}

\begin{proof}[Proof of Theorems \ref{KMB} and \ref{KMA}]
  From the inclusion $\supp(\phi_\pi\C_{\Delta(F)}[p-1])\subseteq \Inst(f)$
  and the hypothesis that $\dim \Inst(f)\leq d$, 
  applying Proposition \ref{prp:stalkCohomologyBoundFromPerversity} we obtain that
  \[
    \mathcal H^i(\phi_\pi\C_{\Delta(F)}[p-1])_0  = 0 \textnormal{ for } i\notin [-d, 0].
  \]
  Theorems \ref{KMB} and \ref{KMA} follow, because for a good representative
  the stalk at the origin is precisely
  \begin{eqnarray*}
    \mathcal H^i(\phi_\pi\C_{\Delta(F)}[p-1])_{0} 
    &=& \mathbb H^{i+1}(\Delta(F), \Delta(f_\delta); \C_{\Delta(F)}[p-1]) \\
    & = & \tilde H^{i+p-1}(\Delta(f_\delta)),
  \end{eqnarray*}
  where the last equality is due to the fact that $\Delta(F)$ is contractible.
\end{proof}
The proof of Theorem \ref{KMDoublePoints} is very similar and thus will only be sketched. Before that, we discuss some subtleties of the analytic structure of $D(f)$. In order to avoid pathologies related to unfoldings, the source double point space $D(f)$ is given an analytic structure which need not be reduced. For our purposes, we do not need to know the details of this construction \cite[Definition 2.2]{MararNunoPenafort}, but only the following two properties:

\begin{enumerate}
\item  For any unfolding $F=(f_t,t)\colon U\times T\to V\times T$, the fiber over $\delta\in T$ of the family 
\[D(F)\stackrel{\pi}{\longrightarrow}T\]
is the complex space $D(f_\delta)$. 
\item If $f\colon (\C^n,S)\to(\C^{n+1},0)$ is finite and generically one-to-one, then the space $D(f)$ is a hypersurface.

\end{enumerate}
 The first statement follows from  \cite[Lemma 4.2]{MararNunoPenafort}. For the second statement, the proof of \cite[Lemma 2.3]{MararNunoPenafort} shows that $D(f)$ is a  Cohen Macaulay subspace of $\C^n$ of codimension one, hence a hypersurface.
 
 \begin{proof}[Proof of Theorem \ref{KMDoublePoints}]
First of all, observe that the statement of Theorem \ref{KMDoublePoints} is trivial unless the dimension $d$ of the instability locus is smaller than $n-1$. Consequently, we may assume $f\colon (\C^n,S)\to (\C^{n+1},0)$ to be a finite map-germ with $\dim(\Inst(f))<n-1$.

The map $f$ is generically one-to-one, because stable mappings are generically one-to-one, and the conditions of finiteness and $\dim(\Inst(f))<n-1$ imply that the preimage of the instability locus is nowhere dense. 

Any unfolding $F$ of $f$ is also finite and generically one-to-one, hence $D(F)$ is a hypersurface in $\C^{n+1}$ and the sheaf $\C_{D(F)}[n]$ is perverse. The shifted vanishing cycle functor associated to the projection $\pi\colon D(F)\to T$ gives a perverse sheaf $\phi_\pi\C_{D(F)}[n-1]$ on $D(f)$. The same argument used in Lemma \ref{lemSuppVanishingCycleDiscriminant} shows the inclusion
\[\supp \phi_\pi\C_{D(F)}\subseteq f^{-1}(\Inst(f)),\]
where the dimension of $f^{-1}(\Inst(f))$ is equal to $d$, because $f$ is finite. Then the result follows 
from the computation of the stalk 
\[
 \mathcal H^i(\phi_\pi\C_{D(F)}[n-1])_{0}\cong \tilde H^{n-1+i}(\Delta(f_\delta)).\qedhere\]
\end{proof}

\section{Multiple point spaces}\label{secMultPoints}

For finite germs $f \colon (\C^n,0) \to (\C^p,0)$ with $p> n+1$, the 
image $Y$ is no longer a hypersurface, and may even fail to be a complete intersection, as shown by the well known twisted cubic:
\[
  (\C^2,0) \to (\C^4,0), \quad (s,t) \mapsto (s^3,s^2t,st^2,t^3).
\]
Since the constant sheaf on non complete 
intersections is not necessarily perverse, we cannot follow the same reasoning as in the proof 
of Theorems \ref{KMA} and \ref{KMB}. Instead, we study the 
cohomology of the disentanglement via the \textit{image computing 
spectral sequence} due to Goryunov and Mond \cite[Proposition 2.3]{GoryunovMond93}:
\[
  E^{p,q}_1 = H^q_{\Alt}(\mathscr D^{p+1}(f)) \Rightarrow H^{p+q}(Y).
\]
and the perversity of the sheaves 
\[
  \Alt R \varepsilon^k_* \C_{\mathscr D^k(f)}[kn -(k-1)p]
\]
on the image $Y$, discovered by Houston \cite[Theorem 2.9]{Houston00}. 
This involves the \textit{strict multiple point spaces} $\sD^k(f)$ 
and their \textit{alternating cohomology} which we will now discuss.

Unfortunately, there is no common agreement on the definition of multiple 
point spaces and different notions are in circulation. What we will refer to 
as the strict multiple points $\mathscr D^k(f)$ is the definition used by 
Goryunov, Mond \cite{GoryunovMond93}, and 
Houston \cite{Houston97}, \cite{Houston97Global}, \cite{Houston00}.
As Example \ref{exDimensionalCorrectness} shows, the strict multiple point spaces are 
badly behaved in deformations: They do not specialize to fibers. 
To remedy this fact, there is another, more subtle definition of 
multiple point spaces $D^k(f)$ due to Gaffney \cite{Ga83} which we will describe 
in Section \ref{secMultiplePointSpaces}. These are what we will refer to as 
simply the \textit{multiple point spaces of} $f$. 
Fortunately, the results about $\mathscr D^k(f)$ we want to use can be adapted to the spaces $D^k(f)$ without difficulties.

\subsection{The strict multiple point spaces $\mathscr D^k(f)$} 
\label{sec:StrictMultiplePointSpaces}

Let $f\colon X\to Z$ be a complex analytic map between holomorphic 
manifolds.  For $k\geq 1$, the $k$-th {\it strict multiple point space} of  $f$ is defined to be 
  \[
    \sD^k(f)= \textnormal{closure} 
    \{(x_{1},\dots,x_{k})\in X^k\mid f(x_i)=f(x_j), x_{i}\neq x_{j}
    \text{ for all }i\neq j\}.
  \]
Note that $\sD^1(f) =X$.

We recall the construction of the alternating complex due to Goryunov 
and Mond \cite{GoryunovMond93}. 
 $\sD^k(f)$ are symmetric in the sense that group $S_k$ acts on them
by permutating the points $x_1,\dots,x_k$. This action preserves 
the fibers of the maps
\[
  \varepsilon^k \colon \sD^k(f) \to Z, \quad 
  (x_1,\dots,x_k) \mapsto f(x_1) = \dots = f(x_k).
\]
As a consequence, there is an action of $S_k$ on the pushforward sheaf
$\varepsilon^k_*\C_{\sD^k(f)}$ on $Z$.
For each $\sigma
\in S_k$, we write the associated automorphism as \[
  \sigma^*\colon \varepsilon^k_*\C_{\sD^k(f)}\to
  \varepsilon^k_*\C_{\sD^k(f)}
\]
 and define
the \emph{alternating operator} 
$\Alt\colon \varepsilon^k_*\C_{\sD^k(f)}\to \varepsilon^k_*\C_{\sD^k(f)}$
by the formula
\[
  \Alt=\frac{1}{k!}\sum_{\sigma\in S_k}\sign(\sigma)\sigma^*.
\]
The image of $\Alt$ defines a subsheaf, which we denote by 
$\Alt \varepsilon^k_* \C_{\sD^k(f)}$.

\medskip
It is clear that the map $X^k \to X^{k-1}$ 
which forgets the $j$-th coordinate takes a $k$-multiple point to a
$(k-1)$-multiple point. Hence, we have maps
\[
  \varepsilon^{k,j}\colon \sD^k(f)\to \sD^{k-1}(f).
\]
These maps induce morphisms $\varepsilon^{k,j}_*\colon
\varepsilon^{k-1}_*\C_{\sD^{k-1}(f)}\to \varepsilon^{k}_*\C_{\sD^{k}(f)}$,
and one can see that there is a well defined differential
\[
  \partial^k\colon \Alt \varepsilon^{k-1}_*\C_{\sD^{k-1}(f)}\to 
  \Alt \varepsilon^k_*\C_{\sD^k(f)},
\]
of the form
\[
  \partial^k=\sum_{i=1}^k(-1)^i\varepsilon^{k,i}_*.
\]
One can also check the equality $\partial^{k+1}\circ\partial^k=0$,
so that we obtain a complex of sheaves $\left(\Alt
\varepsilon^{\bullet}_*\C_{\sD^{\bullet}(f)},\partial^\bullet\right)$, 
the \textit{alternating complex}.

\begin{prop} (\cite[Proposition 2.1]{GoryunovMond93})
  \label{prp:GoryunovMondAlternatingComplex}
  The augmented complex \[
    0 \to \C_Y \to 
    \Alt \varepsilon^{1}_*\C_{\sD^{1}(f)} \to
    \Alt \varepsilon^{2}_*\C_{\sD^{2}(f)} \to
    \cdots
  \]
  is exact.
\end{prop}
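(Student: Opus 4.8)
The plan is to verify exactness of the augmented alternating complex stalkwise, so fix a point $z \in Y$ and examine the stalks at $z$. Since $\varepsilon^k$ is proper (as $f$ is finite — or at least proper on a suitable representative), the stalk $(\varepsilon^k_* \C_{\sD^k(f)})_z$ is identified with the cohomology $H^0$ of the fiber $(\varepsilon^k)^{-1}(z) = \sD^k(f) \cap (f^{-1}(z))^k$, which is itself a finite set; concretely it is the space of $\C$-valued functions on the set of ordered $k$-tuples $(x_1,\dots,x_k)$ of points of the fiber $f^{-1}(z)$ that are "multiple points" in the closure sense. The first step is to make this description precise: if $f^{-1}(z) = \{y_1,\dots,y_m\}$ then, after passing to a small enough representative, a tuple lies in $\sD^k(f)$ iff its entries all lie in $f^{-1}(z)$, with no genericity condition surviving after taking closure, so the stalk is just the free $\C$-vector space on the $m^k$ ordered $k$-tuples — i.e.\ $(\varepsilon^k_*\C_{\sD^k(f)})_z \cong V^{\otimes k}$ where $V = \C^m$ is the free vector space on $f^{-1}(z)$. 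Under this identification, the $S_k$-action is the permutation action on tensor factors, and the forgetful maps $\varepsilon^{k,j}$ induce, on stalks, the "insert a copy" maps dual to deletion; the face maps $\varepsilon^{k,i}_*$ become the usual coface maps of the simplicial structure.

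The second step is to recognize the resulting complex of stalks. Passing to the alternating (sign-isotypic) part, $\Alt (V^{\otimes k}) = \Lambda^k V$, and the differential $\partial^k = \sum_i (-1)^i \varepsilon^{k,i}_*$ becomes, up to sign, the standard map $\Lambda^{k-1} V \to \Lambda^k V$ given by wedging against a fixed vector — namely $e := y_1 + \dots + y_m$, the sum of all basis vectors (this is where the augmentation $\C_Y \to \Alt\varepsilon^1_*\C_{\sD^1(f)}$ enters: on stalks it is the inclusion $\C \to V$, $1 \mapsto e$). So the augmented complex at the stalk level is precisely the Koszul-type complex
\[
0 \to \C \to V \to \Lambda^2 V \to \Lambda^3 V \to \cdots
\]
with all maps $v \mapsto e \wedge v$. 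The third step is the elementary linear-algebra fact that this complex is exact whenever $e \neq 0$, i.e.\ whenever $f^{-1}(z) \neq \emptyset$, which holds for every $z \in Y$. Indeed, extending $e$ to a basis of $V$, the complex splits as a tensor product of the two-term complex $0 \to \C \xrightarrow{\sim} \langle e\rangle \to 0$ with an exterior algebra on a complement, and exactness is immediate; alternatively, contraction with any functional $\lambda$ satisfying $\lambda(e) = 1$ provides an explicit contracting homotopy $\omega \mapsto \iota_\lambda \omega$ for the operator $\omega \mapsto e \wedge \omega$.

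The main obstacle — and the only point requiring genuine care rather than bookkeeping — is the first step: justifying that, on a sufficiently small representative, the stalk of $\varepsilon^k_*\C_{\sD^k(f)}$ at $z$ really is the full tensor power $V^{\otimes k}$ with no lower-dimensional correction terms, and that the combinatorial identification of the face maps with the simplicial cofaces is correct on the nose (including signs). This hinges on two things: properness of $\varepsilon^k$, so that $(\varepsilon^k_*\mathcal F)_z = H^0((\varepsilon^k)^{-1}(z); \mathcal F|_{\text{fiber}})$, and the fact that the closure defining $\sD^k(f)$ contributes exactly the diagonals, so that set-theoretically the fiber $(\varepsilon^k)^{-1}(z)$ is the whole Cartesian power $(f^{-1}(z))^k$ — which follows because the non-diagonal configurations are dense in each such component when $f$ is generically injective on each branch, and in general one argues branch by branch. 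Once the stalk computation is pinned down, exactness is the purely formal Koszul statement above, applied at every $z \in Y$, and since exactness of a complex of sheaves can be checked on stalks, the proposition follows.
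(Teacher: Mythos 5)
Your overall route is the right one, and it is essentially the argument behind the cited result \cite[Proposition 2.1]{GoryunovMond93} (the paper itself only quotes it): check exactness stalkwise, identify the alternating stalk complex with the Koszul-type complex $0 \to \C \to V \to \Lambda^2 V \to \cdots$ given by wedging with $e=\sum_{y\in f^{-1}(z)}y$, and conclude by the contracting homotopy $\iota_\lambda$ with $\lambda(e)=1$; your steps 2 and 3 are fine. The genuine gap is in step 1, which you correctly single out as the delicate point but then justify with a false claim: for the \emph{strict} multiple point spaces $\sD^k(f)$, which are closures of the off-diagonal loci, the fiber $(\varepsilon^k)^{-1}(z)$ is in general a \emph{proper} $S_k$-stable subset of $(f^{-1}(z))^k$, and the proposed density argument ("non-diagonal configurations are dense, argue branch by branch") fails precisely when there are no nearby genuine multiple points. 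For instance, for the injective germ $t\mapsto(t^2,t^3)$ one has $\sD^2(f)=\emptyset$, while $(f^{-1}(0))^2$ is a single (diagonal) point; so the stalk of $\varepsilon^2_*\C_{\sD^2(f)}$ is $0$, not $V^{\otimes 2}$. In general the stalk is $\C^{P}$ for an $S_k$-invariant subset $P\subseteq (f^{-1}(z))^k$ which contains all pairwise-distinct tuples but typically not all diagonal ones.

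The repair is short, and it is exactly the mechanism the paper uses in Lemma \ref{lem:AlternatingSheavesAreEqual}: any diagonal tuple is fixed by a transposition, so elements of the stalk supported at diagonal points have vanishing alternating part. Hence the restriction map $\C^{(f^{-1}(z))^k}\to \C^{P}$, which commutes with the face maps $\varepsilon^{k,i}$ (these preserve $P$), induces an isomorphism on $\Alt$-parts, and the alternating stalk complex is indeed $0\to\C\to V\to\Lambda^2V\to\cdots$ with $V=\C^{f^{-1}(z)}$ and differential $\omega\mapsto e\wedge\omega$ (up to a nonzero constant coming from the signs $(-1)^i$), with $e\neq 0$ because $z\in Y=\im f$. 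With this substitution for your step 1 — you do not need the fiber to be the full Cartesian power, only its alternating part — the proof is correct and coincides with the standard one.
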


Goryunov and Mond have already argued that  one has an isomorphism
\begin{equation}
  H^i( Y, \Alt \varepsilon^k_* \C_{\sD^k(f)}) \cong H^i_{\Alt}(\sD^k(f))
  \label{eqn:GoryunovMondAlternatingSheavesAndCohomology}
\end{equation}
where the term on the right hand side is the \textit{alternating cohomology}
\[
  \left\{ [c] \in H^i_\sing(\sD^k(f)) : 
  \sigma^* [c] = \sign(\sigma)\cdot [c] \text{ for all } \sigma \in S_k
  \right\},
\]
a subspace of the singular cohomology of $\sD^k(f)$.
These considerations were taken to the derived category in 
\cite{Houston00}. Note that, since $\varepsilon^k$ is finite, 
the associated pushforward of sheaves is an exact functor 
and thus in particular
\[  \varepsilon^k_* \C_{\sD^k(f)}
  = R\varepsilon_*^k \C_{\sD^k(f)}
\]
as complexes of sheaves on $Y$ in the derived category. 
The same holds for their respective alternating subsheaves.

\begin{definition}\label{defStrictlyDimCorrect}
  Let $f\colon X\to Z$ be a complex analytic mapping between two complex
  analytic manifolds with $n=\dim X$ and $p=\dim Z$.
  Then $f$ is called {\it strictly dimensionally correct} if for all $k\geq 2$
  the strict multiple point space $\mathscr D^k(f)$ is either empty, 
  or has dimension $ k n-(k-1) p$.
\end{definition}

With these notations gathered
we may cite the key result due to Houston, cf. \cite[Theorem 2.9]{Houston00},
slightly adapted to our setup:

\begin{thm} \label{thm:HoustonPervesity}
  Suppose $f \colon X \to Z$ is a strictly dimensionally correct complex analytic 
  map between complex manifolds of dimensions $n = \dim X < \dim Z = p$. Then 
  $ \Alt \varepsilon^k_*\C_{\sD^k(f)}[kn-(k-1)p] $ is a perverse sheaf.
\end{thm}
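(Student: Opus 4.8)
The plan is to reduce the perversity statement to the two support/cosupport conditions which characterize perverse sheaves for the middle perversity, and to verify them stratum by stratum using the dimensional-correctness hypothesis. First I would fix a complex analytic Whitney stratification of $Z$ adapted to the maps $\varepsilon^k$, refining the natural stratification by the local number of preimages, so that over each stratum $\varepsilon^k$ is a topological fiber bundle and $\Alt\varepsilon^k_*\C_{\sD^k(f)}$ has locally constant cohomology sheaves; constructibility of $\Alt\varepsilon^k_*\C_{\sD^k(f)}[kn-(k-1)p]$ is then immediate since $\varepsilon^k$ is finite and the alternating operator is a projector defined over $\C$. The support condition requires that for each stratum $S$ of complex dimension $s$ the stalk cohomology sheaves $\mathcal H^i(\Alt\varepsilon^k_*\C_{\sD^k(f)}[kn-(k-1)p])$ vanish for $i>-s$; since the complex is concentrated in a single degree (a sheaf placed in degree $-(kn-(k-1)p)$), this reduces to showing that over a stratum $S$ with non-vanishing stalk one has $\dim S \leq kn-(k-1)p$. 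But a point $y\in Z$ lies in the support of $\Alt\varepsilon^k_*\C_{\sD^k(f)}$ only if $\varepsilon^{k,-1}(y)$ is nonempty, i.e. $y$ has at least $k$ preimages (counted appropriately), and the union of such strata is exactly $\varepsilon^k(\sD^k(f))$, which by finiteness of $\varepsilon^k$ has dimension at most $\dim\sD^k(f) = kn-(k-1)p$ by strict dimensional correctness. This gives the support condition.

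For the cosupport condition I would invoke Verdier duality: the alternating sheaf $\Alt\varepsilon^k_*\C_{\sD^k(f)}$ is a direct summand of $\varepsilon^k_*\C_{\sD^k(f)} = R\varepsilon^k_*\C_{\sD^k(f)}$, cut out by the idempotent $\Alt\in\C[S_k]$, and since $\varepsilon^k$ is finite (hence proper with finite fibers) $R\varepsilon^k_*$ commutes with Verdier duality up to the usual shift; concretely $\mathbb D(R\varepsilon^k_*\C_{\sD^k(f)}) \cong R\varepsilon^k_*\mathbb D(\C_{\sD^k(f)})$. Here one uses that $\sD^k(f)$, though possibly singular, is known to be Cohen--Macaulay of pure dimension $kn-(k-1)p$ under strict dimensional correctness — this is part of Houston's and Marar--Mond's analysis of multiple point spaces, where $\sD^k(f)$ is cut out in $X^k$ (or rather in an iterated blow-up / the $k$-fold fiber product set-up) by a regular sequence of the expected length — so that its dualizing complex is a sheaf placed in degree $-2(kn-(k-1)p)$; equivalently $\C_{\sD^k(f)}[kn-(k-1)p]$ is self-dual up to twist by the (shifted) dualizing sheaf, and being Cohen--Macaulay forces $\mathbb D\bigl(\C_{\sD^k(f)}[kn-(k-1)p]\bigr)$ to again be a sheaf in degree $-(kn-(k-1)p)$, not merely a complex. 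Since $\Alt$ is a self-adjoint idempotent (up to the sign character being sent to itself under duality), it commutes with $\mathbb D$, so $\mathbb D\bigl(\Alt\varepsilon^k_*\C_{\sD^k(f)}[kn-(k-1)p]\bigr)$ is again of the same shape — a single sheaf in the correct degree, supported on a set of dimension $\leq kn-(k-1)p$ — and hence satisfies the support condition by the argument of the previous paragraph. By the self-duality of the middle perversity, support condition for $\sF$ together with support condition for $\mathbb D\sF$ is precisely perversity of $\sF$.

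The main obstacle, and the point where one must be careful rather than formal, is the Cohen--Macaulayness of $\sD^k(f)$ and the precise behaviour of the dualizing complex: one needs that strict dimensional correctness does not merely bound $\dim\sD^k(f)$ but actually forces $\sD^k(f)$ to be a Cohen--Macaulay space of that pure dimension, so that $\C_{\sD^k(f)}[\dim]$ is its own dual up to twist by a sheaf (rather than a genuine complex) — this is exactly the content that makes the argument go through and is the reason Houston works with the specific presentation of $\sD^k(f)$ via the Marar--Mond equations. A secondary technical point is checking that the $S_k$-action is compatible with all of these identifications — that $\Alt$ commutes with $R\varepsilon^k_*$, with restriction to strata, and with Verdier duality — which is routine since $\Alt$ is induced by an actual action on the space $\sD^k(f)$ over $Z$, but must be stated. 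Once these are in place the perversity follows, as above, from Proposition \ref{prp:stalkCohomologyBoundFromPerversity}-type dimension counting applied to both $\sF$ and $\mathbb D\sF$.
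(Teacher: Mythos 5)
Your proposal cannot be checked against an argument in the paper, because the paper does not prove Theorem \ref{thm:HoustonPervesity}: it is quoted (slightly adapted) from Houston \cite[Theorem 2.9]{Houston00}. Judged on its own, your support-condition paragraph is fine, but the cosupport half contains a genuine gap. First, the Cohen--Macaulayness of $\sD^k(f)$ that your duality step rests on is not available under the stated hypotheses: the Marar--Mond type results you allude to concern a specific scheme structure on multiple point spaces of \emph{corank one} germs (cf. \cite{MararMondCorank1}), whereas the theorem here makes no corank assumption and $\sD^k(f)$ carries only its reduced structure as a closure; Houston neither proves nor uses such a statement. Second, and more fundamentally, even if $\sD^k(f)$ were Cohen--Macaulay, the inference ``Cohen--Macaulay of pure dimension $kn-(k-1)p$, hence $\mathbb{D}\bigl(\C_{\sD^k(f)}[kn-(k-1)p]\bigr)$ is again a single sheaf in degree $-(kn-(k-1)p)$'' conflates the coherent (Grothendieck) dualizing complex with the topological Verdier dualizing complex. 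Cohen--Macaulayness is an algebraic condition on local rings and puts no bound on the local link cohomology that governs $\mathbb{D}\C_{\sD^k(f)}$; what would be needed is a condition of the type ``local complete intersection / maximal rectified homological depth'' (as in Proposition \ref{propPerversityConstantSheafLCI}) or a rational homology manifold hypothesis, none of which follow from strict dimensional correctness.

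A symptom that the route is wrong: alternation enters your argument only through the assertion that $\Alt$ commutes with $\mathbb{D}$, so if the argument worked it would prove that the full complex $\varepsilon^k_*\C_{\sD^k(f)}[kn-(k-1)p]$ is perverse, equivalently (since pushforward along the finite map $\varepsilon^k$ preserves and reflects the stalk and costalk conditions) that $\C_{\sD^k(f)}[kn-(k-1)p]$ is perverse on $\sD^k(f)$ itself. That statement is strictly stronger than the theorem and false in general; the point of Houston's result --- the reason the paper calls these sheaves ``non-trivially perverse'' --- is precisely that the alternating summand is perverse even when the multiple point spaces are too singular for the shifted constant sheaf to be. Houston's actual proof verifies the costalk condition directly, using vanishing theorems for the alternating (co)homology of multiple point spaces of finite complex analytic maps established in \cite{Houston97}, with no duality or Cohen--Macaulay input. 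So the essential idea needed for the cosupport condition is missing from your plan.
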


\subsection{The multiple point spaces $D^k(f)$} \label{secMultiplePointSpaces} 

As we already mentioned before, the strict multiple point spaces discussed 
in the previous section are not well behaved in families. This is illustrated 
by the following example.

\begin{ex}\label{exStrictDimCorrect}
  \label{exDimensionalCorrectness}
  The cuspidal edge $f \colon (x,y)\mapsto(x,y^2,y^3)$ of Example \ref{exCuspEdge}
  and the similar germ $f' \colon (x,y)\mapsto (x,y^2,y^3,0)$ of Example
  \ref{exNonDimCorrect} are both strictly dimensionally correct because 
  the maps are injective and therefore the multiple point spaces are empty.

  However, the map $F' \colon (x,y,t)\mapsto
  (x,y^2,y^3+ty(x^2-1),0,t)$ from the unfolding in 
  Example \ref{exNonDimCorrect} is \emph{not} strictly dimensionally
  correct: Its multiple point spaces $\sD^1(F)$ have dimension two while 
  the expected dimension is one. 
  
  Without the dummy variable, i.e. for the map $f$ as in 
  Example \ref{exCuspEdge}, not only $f$, but also the unfolding 
   \[
    F \colon (x,y,t)\mapsto(x,y^2,y^3+t\cdot y(x^2-t))
  \]
  is strictly dimensionally correct with expected dimension two for 
  the double point spaces. In this example we encounter another problem:
  Observe that the strict double points of $f$ are empty
  while those of $F$ are not. This shows the failure of specialization
  \[\sD^2(F)\cap\{t=0\}\neq\sD^2(f)\]
  and thereby illustrates the necessity to use Gaffney's multiple 
  point spaces. 
\end{ex}

The pathological behaviour exhibited in this example can be avoided by taking
a different definition of multiple points due to Gaffney \cite{Ga83}. Our
exposition follows \cite{Nuno-Ballesteros2015On-multiple-poi}, where
the reader can find proofs and details omitted here.

\medskip

Let $f\colon(\C^n,S)\to(\C^p,0)$ be a finite multi-germ. Since $f$
is finite, hence $\cK$-finite, it admits a stable unfolding  \cite[Theorem 7.2]{MNB20}
\[
  F=(f_t,t)\colon(\C^n \times \C^r,S\times\{0\})\to(\C^p \times \C^r,(0,0))
\]
where $t$ is the coordinate of the parameter space $(\C^r,0)$.
It is clear that a point in $\sD^k(F)$ has $t_i=t_j$, for all $1\leq
i,j\leq k$. Therefore $\sD^k(F)$ can be embedded in $(\C^n)^k\times \C^r$
rather than in $(\C^n \times \C^r)^k$. The
multiple point spaces of $f$ are defined as
\[
  D^k(f):=\sD^k(F)\cap\{t=0\}.
\]

The space $D^k(f)$ does not depend on the chosen stable unfolding $F$.
 Multiple point spaces of finite mappings between complex manifolds are
      defined by patching the multiple point spaces of their corresponding
      multi-germs.
      
To compare $\mathscr D^k(f)$ to $D^k(f)$, we write the \emph{the $k$-th fat diagonal of $X$} as
\[ 
  \Delta^k = \{ (x_1,\dots,x_k) \in X^k \mid x_i = x_j \text{ for some } i \neq j \}.
\]
\begin{prop}{(Properties of the multiple point spaces)}
  \label{prp:PropertiesOfTheMultiplePointSpaces}
Let $f\colon X\to Z$ be a finite mapping between complex manifolds. Then
  \label{rem:PropertiesOfMultiplePoints}
  \begin{enumerate}
    \item  the spaces $D^k(f)$ 
      and $\sD^k(f)$ satisfy the relation
      \[\sD^k(f)=\overline{D^k(f)\setminus \Delta^k};\]  

          \item 
	    if $f$ is stable, then $D^k(f)=\sD^k(f)$; 
  \item 
	 unlike $\mathscr D^k(f)$, the spaces $D^k(f)$ behave well under
      deformations in the sense that, for any unfolding $F\colon X\times T\to
      Z\times T$,    the family $\pi\colon D^k(F)\to T$ satisfies
      \[
	D^k(f_{\delta})=\pi^{-1}(\delta),\text{ for any }\delta \in T.
      \]

  \end{enumerate}
\end{prop}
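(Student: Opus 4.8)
The plan is to prove the three items in the order (2), (3), (1), since (2) is essentially definitional once one unpacks the construction of $D^k$, item (3) is the main content, and item (1) follows by combining the previous two with a dimension-count on the fat diagonal. Throughout I would fix a stable unfolding $F\colon(\C^n\times\C^r,S\times\{0\})\to(\C^p\times\C^r,(0,0))$ realising $D^k(f)=\sD^k(F)\cap\{t=0\}$, and recall the well-definedness (independence of $F$) which we are allowed to cite from \cite{Nuno-Ballesteros2015On-multiple-poi}.

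For item (2), suppose $f$ is already stable. Then the trivial (zero-parameter) unfolding is a stable unfolding, so $D^k(f)=\sD^k(f)\cap\{t=0\}=\sD^k(f)$ directly from the definition; alternatively, if one insists on a positive-dimensional stable unfolding $F$, stability of $f$ makes $F$ a trivial unfolding, hence $F$ is $\cA$-equivalent to $f\times\id_{\C^r}$, and $\sD^k(f\times\id_{\C^r})=\sD^k(f)\times\C^r$, so intersecting with $\{t=0\}$ returns $\sD^k(f)$. For item (3), the key observation is that an unfolding $F\colon X\times T\to Z\times T$ of $f$ is itself a family of maps $f_\delta$, and a stable unfolding $\widetilde F$ of $f$ extends (after enlarging the parameter space) to a stable unfolding of each $f_\delta$ compatibly in $\delta$: concretely one takes the combined unfolding $G\colon X\times T\times\C^r\to Z\times T\times\C^r$, $(x,\delta,t)\mapsto(\widetilde f_{t}(x,\delta),\delta,t)$ where $\widetilde F=(\widetilde f_t,t)$ is built so that $\widetilde f_0=F$. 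Then $\sD^k(G)$ maps to $T$ with fibre over $\delta$ equal to $\sD^k$ of the stable unfolding of $f_\delta$, and slicing $\{t=0\}$ gives $D^k(F)\to T$ with fibre $D^k(f_\delta)$. The only subtlety is checking that slicing and taking closures commute, i.e. that $\overline{\{f(x_i)=f(x_j),x_i\neq x_j\}}\cap\{t=0,\delta=\delta_0\}$ agrees with the analogous closure computed inside the slice; this is where one uses that $G$ is a \emph{stable} (hence in particular flat, dimensionally controlled) unfolding, so that no components of $\sD^k$ are absorbed into the diagonal upon specialisation — exactly the pathology that Example \ref{exStrictDimCorrect} warns against for $\sD^k$ alone. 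I expect this commutation of closure with specialisation to be the main obstacle, and I would handle it by invoking flatness of the projection $\pi\colon\sD^k(G)\to T$ (a standard consequence of the good properties of $D^k$ recorded in \cite{Nuno-Ballesteros2015On-multiple-poi}), which guarantees that $\pi$ has no embedded or vertical components and hence that the scheme-theoretic fibre is the full specialisation of the generic fibre.

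Finally, for item (1), I would argue set-theoretically first and then note the scheme structures match. By definition $D^k(f)=\sD^k(F)\cap\{t=0\}$ for a stable unfolding $F$; since $F$ is stable, item (2) gives $D^k(F)=\sD^k(F)$, so by item (3) (applied to the unfolding $F$ of $f$) the space $D^k(f)$ is the $\{t=0\}$-fibre of $D^k(F)\to\C^r$. Now on the locus where the $x_i$ are pairwise distinct, $\sD^k(f)$ and $D^k(f)$ coincide by construction, because that locus is cut out of $\sD^k(F)$ by $t=0$ together with $x_i\neq x_j$, which is precisely the defining open set of $\sD^k(f)$ before taking closure. Taking closures on both sides, $\overline{D^k(f)\setminus\Delta^k}=\overline{\sD^k(f)\setminus\Delta^k}$; but $\sD^k(f)$ is by definition the closure of a set contained in the complement of $\Delta^k$, so $\sD^k(f)=\overline{\sD^k(f)\setminus\Delta^k}$, which yields $\sD^k(f)=\overline{D^k(f)\setminus\Delta^k}$ as claimed. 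For the scheme structure one observes that $D^k(f)$ is Cohen–Macaulay of the expected codimension (again from \cite{Nuno-Ballesteros2015On-multiple-poi}), so it has no embedded components supported on $\Delta^k$ and is therefore determined by its restriction to the complement of the fat diagonal, making the set-theoretic identification an isomorphism of complex spaces.
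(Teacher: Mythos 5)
The paper does not actually prove this proposition: it is quoted from \cite{Nuno-Ballesteros2015On-multiple-poi}, to which the reader is referred for all proofs, so your proposal is measured against the argument given there, which --- like yours --- rests entirely on the definition $D^k(f)=\sD^k(F)\cap\{t=0\}$ together with the independence of the chosen stable unfolding. Your items (2) and the set-theoretic core of (1) are correct and essentially definitional: for (1), a point of $\sD^k(F)\cap\{t=0\}$ lying off the fat diagonal is exactly a tuple of pairwise distinct points with equal $f$-images, so $D^k(f)\setminus\Delta^k$ coincides with the open set whose closure defines $\sD^k(f)$, and taking closures gives the claim.

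Two steps need repair. In (3), your intermediate claim that the fibre of $\sD^k(G)\to T$ over $\delta$ equals $\sD^k$ of the sliced map is both unproven and unnecessary: slicing does not commute with the closure defining $\sD^k$ in general (this is precisely the pathology of Example \ref{exStrictDimCorrect}), the flatness of $\sD^k(G)\to T$ you invoke to patch this is not a standard fact, and attributing it to the ``good properties of $D^k$ recorded in \cite{Nuno-Ballesteros2015On-multiple-poi}'' is circular, since that reference is where the proposition itself is established. The fix is to bypass the intermediate slice: with your common unfolding $G(x,\delta,t)=(\widetilde f_t(x,\delta),\delta,t)$ (unfolding terms added only in the $Z$-component), the single stable map $G$ is at once a stable unfolding of $F$ with parameter $t$ and, at each base point, a stable unfolding of $f_{\delta_0}$ with parameters $(\delta-\delta_0,t)$; well-definedness then gives $D^k(F)=\sD^k(G)\cap\{t=0\}$ and $D^k(f_{\delta_0})=\sD^k(G)\cap\{t=0,\,\delta=\delta_0\}$, so $D^k(f_{\delta_0})=\pi^{-1}(\delta_0)$ even as complex spaces, with no closure-versus-specialisation issue at all. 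Second, in (1) the closing assertion that $D^k(f)$ is always Cohen--Macaulay of the expected codimension is false in general --- failure of the expected dimension is exactly why the ``dimensionally correct'' hypothesis is imposed in Theorem \ref{KM}, cf.\ Example \ref{exNonDimCorrect}. Fortunately it is not needed: since $\sD^k(f)$ is by definition the (reduced) closure of a set, item (1) is a statement about underlying reduced spaces, and your set-theoretic argument already proves it; the scheme-theoretic upgrade should simply be dropped.
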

Just as in the case of $\sD^k(f)$, the spaces $D^k(f)$ are $S_k$-invariant spaces endowed with finite maps $\varepsilon^{k,j}\colon D^k(f)\to D^{k-1}(f)$, giving rise to a complex 
\[
  \left(\Alt
\varepsilon^{\bullet}_*\C_{D^{\bullet}(f)},\partial^\bullet\right).
\]
We wish to replace $\sD^k(f)$ by our favourite $D^k(f)$ in the study
of $\C_Y$. While the spaces $D^k(f)$ and $\sD^k(f)$ and hence also 
the sheaves $\varepsilon_*^k \C_{D^k(f)}$ and $\varepsilon^k_* \C_{\sD^k(f)}$ 
may differ, the corresponding \textit{alternating} sheaves do not:

\begin{lem} \label{lem:AlternatingSheavesAreEqual} 
  For any finite map $f$ between complex manifolds, the complexes $\left(\Alt
\varepsilon^{\bullet}_*\C_{D^{\bullet}(f)},\partial^\bullet\right)$ and $\left(\Alt
\varepsilon^{\bullet}_*\C_{\sD^{\bullet}(f)},\partial^\bullet\right)$ are equal.
\end{lem}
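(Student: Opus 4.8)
The plan is to exhibit a canonical isomorphism between the two complexes, induced simply by restriction of sections from $D^k(f)$ to $\sD^k(f)$. The starting observation is that Proposition \ref{prp:PropertiesOfTheMultiplePointSpaces}(1) gives, for each $k$, an inclusion $\sD^k(f)\subseteq D^k(f)$ as a \emph{closed} analytic subspace, together with the fact that the (open) complement $D^k(f)\setminus\sD^k(f)$ is contained in the $k$-th fat diagonal $\Delta^k$: indeed $\sD^k(f)=\overline{D^k(f)\setminus\Delta^k}$ and $D^k(f)$ is closed in $X^k$. It is worth recalling here that the constant sheaf $\C_{(-)}$ only depends on the underlying topological space, so the (possibly non-reduced) scheme structures are irrelevant. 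First I would record the resulting $S_k$-equivariant short exact sequence of sheaves on $D^k(f)$,
\[
0\to\mathscr K\to\C_{D^k(f)}\to i_*\C_{\sD^k(f)}\to 0,
\]
where $i\colon\sD^k(f)\hookrightarrow D^k(f)$ is the closed embedding and $\mathscr K$ is the kernel of the restriction map; its stalk is $\C$ at the points of $D^k(f)\setminus\sD^k(f)$ and $0$ elsewhere. Applying the (exact) pushforward along the finite $S_k$-invariant map $\varepsilon^k$ and using $\varepsilon^k\circ i=\varepsilon^k$, one obtains an $S_k$-equivariant short exact sequence
\[
0\to\varepsilon^k_*\mathscr K\to\varepsilon^k_*\C_{D^k(f)}\xrightarrow{q}\varepsilon^k_*\C_{\sD^k(f)}\to 0
\]
of sheaves on $Z$.

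The heart of the argument is that $\Alt$ annihilates $\varepsilon^k_*\mathscr K$. Since $\varepsilon^k$ is finite, the stalk $(\varepsilon^k_*\mathscr K)_y$ at a point $y$ is the direct sum of copies of $\C$ indexed by the points $x=(x_1,\dots,x_k)$ of the fibre $(\varepsilon^k)^{-1}(y)$ that lie in $D^k(f)\setminus\sD^k(f)$, hence in $\Delta^k$. For such an $x$ there are indices $i\neq j$ with $x_i=x_j$, so the transposition $\tau=(i\,j)$ fixes $x$ and therefore fixes the corresponding summand $\C e_x$. Pairing each $\sigma\in S_k$ with $\sigma\tau$ and using $\sign(\sigma\tau)=-\sign(\sigma)$ then yields $\Alt(e_x)=0$, the usual sign cancellation. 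Since $\Alt$ is idempotent, any element of $\Alt\,\varepsilon^k_*\C_{D^k(f)}$ lying in $\ker q=\varepsilon^k_*\mathscr K$ is both fixed and annihilated by $\Alt$, hence zero; so $q$ is injective on $\Alt$-parts. As $q$ is $S_k$-equivariant it commutes with $\Alt$, so it also restricts to a surjection $\Alt\,\varepsilon^k_*\C_{D^k(f)}\to\Alt\,\varepsilon^k_*\C_{\sD^k(f)}$. Being bijective on every stalk, this restricted map is an isomorphism of sheaves $\bar q^k\colon\Alt\,\varepsilon^k_*\C_{D^k(f)}\xrightarrow{\sim}\Alt\,\varepsilon^k_*\C_{\sD^k(f)}$.

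Finally I would check that the $\bar q^k$ assemble into an isomorphism of complexes, i.e. that they intertwine the differentials $\partial^\bullet$. This is pure naturality: the coordinate-forgetting maps $\varepsilon^{k,j}$ on the $D^\bullet(f)$ restrict to the corresponding maps on the $\sD^\bullet(f)$, so the morphisms $\varepsilon^{k,j}_*$ entering $\partial^k$ commute with the restriction maps $\C_{D^k(f)}\to\C_{\sD^k(f)}$; summing over $j$ with the alternating signs gives $\bar q^k\circ\partial^k=\partial^k\circ\bar q^{k-1}$. Since each $\bar q^k$ is induced by the canonical restriction $\C_{D^k(f)}\to i_*\C_{\sD^k(f)}$, we may and do identify the two alternating complexes along it, which is the assertion of the lemma. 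The argument is elementary once the short exact sequence is in place; the only two points that deserve attention are that the constant sheaves ignore the scheme structure — so that Proposition \ref{prp:PropertiesOfTheMultiplePointSpaces}(1) may be used verbatim at the level of topological spaces — and the sign cancellation for $\Alt$ over fat-diagonal points, which is precisely where the geometric input $\sD^k(f)=\overline{D^k(f)\setminus\Delta^k}$ is used.
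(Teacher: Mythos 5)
Your proof is correct and follows essentially the same route as the paper: the paper likewise decomposes the stalk of $\varepsilon^k_*\C_{D^k(f)}$ over the fibre into the part coming from $\sD^k(f)$ and the part coming from fat-diagonal points, and kills the latter's alternating part by exactly your transposition sign-cancellation. Your packaging via the short exact sequence $0\to\mathscr K\to\C_{D^k(f)}\to i_*\C_{\sD^k(f)}\to 0$ and the explicit check that the maps intertwine the differentials $\partial^\bullet$ are just slightly more formal versions of what the paper does (or leaves implicit), so no gap.
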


Note that with this lemma we may replace $\Alt \varepsilon^k_* \C_{\sD^k(f)}$ by 
$\Alt \varepsilon_*^k \C_{D^k(f)}$ in Proposition \ref{prp:GoryunovMondAlternatingComplex}. 
The idea behind the proof of Lemma \ref{lem:AlternatingSheavesAreEqual} is not 
new and appears already in \cite{GoryunovMond93}, as well as \cite{Houston00} and
\cite{Houston97Global}. Due to the lack of a citeable reference, we 
give a self contained argument.

\begin{proof}
  We verify that the stalks of both sheaves coincide at an arbitrary point 
  $y \in Y$. Fix one such point and let 
  \[
    P := \left( \varepsilon^k \right)^{-1}(y) \subset D^k(f)
  \]
  be its preimage under the finite map $\varepsilon^k \colon D^k(f) \to Y$.
  Set $P' := \sD^k(f) \cap P$ to be the preimage in the subspace 
  $\sD^k(f)$. According to Proposition \ref{prp:PropertiesOfTheMultiplePointSpaces}, 
  every point $x \in P \setminus P'$ has to be contained in the fat diagonal
  $\Delta^k$. Now the pushforward of $\C_{D^k(f)}$ along $\varepsilon^k$ 
  decomposes as 
  \begin{eqnarray*}
    \varepsilon_*^k \C_{D^k(f)} &=& 
    \left( \bigoplus_{x \in P'} \C_{D^k(f),x} \right) 
    \oplus
    \left( \bigoplus_{x \notin P'} \C_{D^k(f),x} \right).
  \end{eqnarray*}
  The alternating operator clearly respects this decomposition 
  since $P'$ is itself an $S_k$-invariant subspace. 
  Note that the first summand is nothing but the stalk of 
  $\varepsilon_*^k \C_{\sD^k(f)}$ by definition of $P'$. Thus, 
  we may conclude the proof by showing that the second summand 
  has no alternating part.

  To see the latter, let $x = (x_1,\dots,x_k) \in P \setminus P'$ 
  be an arbitrary point and $c \in \C_{D^k(f),x}$ an element of 
  its associated stalk. Since $x$ has to be a diagonal point, 
  there exists one pair of indices $0< i<j \leq k$ such that 
  $x_i = x_j$. Let $\tau \in S_k$ be the corresponding transposition. 
  Then we have 
  \[
    - \Alt c = \tau* \Alt c = \Alt (\tau*c) = \Alt c,
  \]
  because $\Alt c$ is alternating and $\tau$ takes the point $x$ to 
  itself. Thus, $\Alt c = 0$ for every element $c$ of any stalk 
  of $\C_{D^k(f)}$ at points outside $\sD^k(f)$.
\end{proof}

\begin{rem}\label{remHypercohomAltSheavesDk}
  Parallel to (\ref{eqn:GoryunovMondAlternatingSheavesAndCohomology}) we have the equality
  \[
    H^i(Y,\Alt\varepsilon^k_*\C_{D^k(f)})=H^i_{\Alt}(D^k(f),\C).
  \]
  This shows indirectly that $\sD^k(f)$ and $D^k(f)$ have the
  same alternating cohomology, thanks to the equality $\left(\Alt
  \varepsilon^{\bullet}_*\C_{D^{\bullet}(f)},\partial^\bullet\right)=\left(\Alt
  \varepsilon^{\bullet}_*\C_{\sD^{\bullet}(f)},\partial^\bullet\right)$ 
  proved in Lemma \ref{lem:AlternatingSheavesAreEqual}.
\end{rem}

As in the case of $\sD^k(f)$, if $f\colon X\to Z$ is a mapping between two complex
  analytic manifolds with $n=\dim X$ and $p=\dim Z$, then every irreducible component of $D^k(f)$ has dimension at least $kn-(k-1)p$. The use of $D^k(f)$ rather than $\sD^k(f)$ calls for the following adaptation:

\begin{definition}
  A finite map-germ $f\colon(\C^n,S)\to(\C^p,0)$ with $p>n$ is
  \emph{dimensionally correct} if for all $k\geq 2$ the multiple point
  space $D^k(f)$ is either empty or has dimension $k n-(k-1) p$. 
\end{definition}

It is clear that every unfolding $F$ of a finite map-germ $f$ must be finite as well. Moreover, any stable unfolding of $F$ is a stable unfolding of $f$ as well. From this observation, one concludes easily the following result:

\begin{prop}\label{propDimCorrectUnfolding}If a map-germ $f$ is dimensionally correct, then every unfolding $F$ of $f$ is dimensionally correct.
\end{prop}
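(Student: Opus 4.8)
The plan is to reduce the statement to property (3) of Proposition~\ref{prp:PropertiesOfTheMultiplePointSpaces}, the general lower bound on the dimension of multiple point spaces recalled just above, and the elementary fibre-dimension inequality. Write the unfolding as
\[
  F\colon (\C^n\times\C^r,\, S\times\{0\})\to(\C^p\times\C^r,0),
\]
a finite map-germ between manifolds of dimensions $n+r$ and $p+r$, for which the expected dimension of $D^k(F)$ is $k(n+r)-(k-1)(p+r)=kn-(k-1)p+r$. Since the cited lower bound already gives $\dim Z\geq kn-(k-1)p+r$ for every irreducible component $Z$ of $D^k(F)$, what remains is the reverse inequality, together with the claim that $D^k(f)=\emptyset$ forces $D^k(F)=\emptyset$.

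First I would record two structural facts. Because every point of $\sD^k$ of an unfolding has all its parameter coordinates equal, $D^k(F)$ sits naturally inside $(\C^n)^k\times\C^r$, and the projection to the last factor is a map $\pi\colon D^k(F)\to\C^r$ with $\pi^{-1}(\delta)=D^k(f_\delta)$ by Proposition~\ref{prp:PropertiesOfTheMultiplePointSpaces}(3); in particular $\pi^{-1}(0)=D^k(f)$. Moreover $D^k(F)$ is a multi-germ supported on points of $(\C^n)^k\times\{0\}$, so each of its irreducible components meets the central fibre $\{t=0\}$.

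Now let $Z$ be an irreducible component of $D^k(F)$, set $e=\dim Z$, and pick a base point $z\in Z$, necessarily with $t$-coordinate $0$. The fibre $\pi^{-1}(0)$ is defined inside $Z$ by the $r$ equations $t_1=\dots=t_r=0$, so every component of $Z\cap\pi^{-1}(0)$ through $z$ has dimension at least $e-r$. If $D^k(f)=\pi^{-1}(0)$ were empty this would be absurd, since $z\in Z\cap\pi^{-1}(0)$; hence in that case $D^k(F)$ has no components and is empty. Otherwise dimensional correctness of $f$ gives $\dim\bigl(Z\cap\pi^{-1}(0)\bigr)\leq\dim D^k(f)=kn-(k-1)p$, so $e-r\leq kn-(k-1)p$, i.e. $e\leq kn-(k-1)p+r$. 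Combined with the lower bound, $e=kn-(k-1)p+r$ for every component $Z$, which is exactly dimensional correctness of $F$.

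There is no real obstacle here; the only point requiring care is the bookkeeping behind the two structural facts. Identifying $\pi^{-1}(0)$ with $D^k(f)$ is precisely Proposition~\ref{prp:PropertiesOfTheMultiplePointSpaces}(3) (which in turn rests on the observation, noted before the statement, that a stable unfolding of $F$ is again a stable unfolding of $f$), and the fact that every component of the germ $D^k(F)$ meets $\{t=0\}$ is immediate once one recalls that $D^k(F)$ is taken as a germ along $(\C^n)^k\times\{0\}$. The finiteness of $F$, which is needed for all of this, holds automatically for unfoldings of finite germs.
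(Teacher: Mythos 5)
Your proof is correct and follows exactly the route the paper intends: the paper only records that stable unfoldings of $F$ are stable unfoldings of $f$ and leaves the conclusion as ``easy'', and your argument---identifying $D^k(f)$ with the central fibre of $\pi\colon D^k(F)\to\C^r$ via Proposition~\ref{prp:PropertiesOfTheMultiplePointSpaces}, then combining the general lower bound $\dim D^k(F)\geq k(n+r)-(k-1)(p+r)=kn-(k-1)p+r$ with the fact that cutting by the $r$ parameter equations drops dimension by at most $r$ (and that every component of the germ meets $\{t=0\}$, which also handles the empty case)---is precisely the bookkeeping the paper omits. No gaps.
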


\begin{rem}
  \label{rem:DiscussionOfDimensionalCorrectness} The previous assertion
  does not hold if one replaces the property of being dimensionally
  correct by that of being strictly dimensionally correct (see Example
  \ref{exStrictDimCorrect}).  Being dimensionally correct implies being
  strictly dimensionally correct, but the converse is not true, as shown
  again by the germ $(x,y)\mapsto(x,y^2,y^3,0)$.
\end{rem}

\subsection{The image computing spectral sequence} 

Let $f\colon (\C^n,0)\to (\C^p,0)$ be a finite map-germ with $p>n$ and let
$F$ be a one-parameter unfolding of $f$. We wish to compute the 
reduced cohomology of the disentanglement $Y_\delta$ of a good representative 
of $F$. Goryunov and Mond have shown \cite{GoryunovMond93} how 
to do this in terms of the alternating complex. We are basically going 
to rephrase their argument in the derived category so that we can bring the 
perversity of the alternating sheaves, Theorem \ref{thm:HoustonPervesity}, into the 
picture. 

In the derived category, Proposition \ref{prp:GoryunovMondAlternatingComplex} 
and Lemma \ref{lem:AlternatingSheavesAreEqual} 
assert that the constant sheaf $\C_{\mathcal Y}$ on the image $\mathcal Y$ of the unfolding $F$
is quasi-isomorphic to the alternating complex 
$\left( \Alt \varepsilon^\bullet_* \C_{D^k(F)}, \partial^\bullet \right)$ 
and we may thus replace $\C_{\mathcal Y}$ by this complex.

If we let $\pi \colon \mathcal Y \to T$ be the projection onto the unfolding 
parameter, then
the reduced cohomology of the disentanglement $Y_\delta$ 
are the vanishing cycles of $\pi$ at the origin:
\begin{eqnarray*}
  \tilde H^{i}(Y_\delta) &\cong& \mathcal H^i(  \phi_\pi\left( \C_{\mathcal Y}\right))_0 \\
  &\cong&  \mathcal H^i(\left(\phi_\pi (\Alt \varepsilon^\bullet_* \C_{D^\bullet(F)}) \right)_0.
\end{eqnarray*}

\medskip

The nearby and the vanishing cycles of
$\pi \colon \mathcal Y \to T$ are computed
using the well known diagram
\begin{equation}
  \xymatrix{
    Y_0 \ar@{^{(}->}[r]^i \ar[d] & 
    \mathcal Y \ar[d]^\pi & 
    \mathcal Y \setminus Y_0 \ar@{_{(}->}[l]_j  \ar[d]&
    Y_\delta \times S \ar[l]_\rho \ar[d] \\
    \{0\} \ar@{^{(}->}[r] & 
    T &
    T^* \ar@{_{(}->}[l] &
    S \ar[l]^{\exp}
  }
  \label{eqn:DiagramVanishingCycles}
\end{equation}
for a good representative $F$ of a $1$-parameter family and its image
$\mathcal Y$. Here, $\exp \colon S \to T^*$ is the universal cover 
of the punctured unit disc and 
$Y_\delta \times S$ the trivialized fiber product over the infinite strip $S$, 
cf. \cite[Chapter 4.2]{D2}.

\begin{lem}
  For a good representative $F$ of a $1$-parameter unfolding of 
  a $\mathcal K$-finite map 
  $f \colon (\C^n,0) \to (\C^p,0)$ with $n<p$
  there is a spectral sequence with first page 
  \begin{eqnarray*}
    E^{i,j}_1 &=& \mathcal H^i\left(
    \phi_\pi\left(\Alt \varepsilon^{j+1}_* \C_{D^{j+1}(F)}\right)
    \right)_0
  \end{eqnarray*}
  converging to the reduced cohomology of the disentanglement $Y_\delta$ of $f$ 
  \[
    \tilde H^{i+j}( Y_\delta) = \mathcal H^{i+j}(\phi_\pi \C_{\mathcal Y})_0.
  \]
  Here $\pi \colon \mathcal Y \to T$ is the projection of the image $\mathcal Y$ of 
  $F$ to the deformation parameter.
  \label{lem:SpectralSequenceForAlternatingCohomology}
\end{lem}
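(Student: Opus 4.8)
The plan is to obtain the spectral sequence as the hypercohomology spectral sequence of the complex of sheaves $\Alt \varepsilon^\bullet_* \C_{D^\bullet(F)}$ on $\mathcal Y$, transported through the vanishing cycle functor. Concretely, by Proposition \ref{prp:GoryunovMondAlternatingComplex} together with Lemma \ref{lem:AlternatingSheavesAreEqual}, the augmented complex
\[
  0 \to \C_{\mathcal Y} \to \Alt \varepsilon^1_* \C_{D^1(F)} \to \Alt \varepsilon^2_* \C_{D^2(F)} \to \cdots
\]
is exact, so that $\C_{\mathcal Y}$ is quasi-isomorphic to the (non-augmented) complex $K^\bullet := \left(\Alt \varepsilon^{\bullet+1}_* \C_{D^{\bullet+1}(F)}, \partial^\bullet\right)$ placed in degrees $j \geq 0$. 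Since $\varepsilon^k$ is finite, each term is a genuine sheaf (equal to its own derived pushforward), so $K^\bullet$ is a bona fide bounded complex of constructible sheaves representing $\C_{\mathcal Y}$ in $D^b_c(\mathcal Y)$.

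Next I would apply the exact functor $\phi_\pi \colon D^b_c(\mathcal Y) \to D^b_c(Y_0)$ followed by taking the stalk at $0$, which is again exact (it is a stalk of a hypercohomology computed on a small ball, cf. \eqref{eqn:vanishingCycleFunctorStalk}); both operations commute with the formation of the stupid (brutal) filtration on $K^\bullet$ by the column index $j$. The spectral sequence of this filtered complex is the standard ``second hypercohomology spectral sequence'' whose $E_1$-page is obtained by applying $\mathcal H^i(\phi_\pi(-))_0$ termwise:
\[
  E^{i,j}_1 = \mathcal H^i\left(\phi_\pi\left(\Alt \varepsilon^{j+1}_* \C_{D^{j+1}(F)}\right)\right)_0,
\]
with differentials $d_1$ induced by the Čech-type differential $\partial^\bullet$ of the alternating complex. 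Because $K^\bullet$ represents $\C_{\mathcal Y}$, this spectral sequence converges to $\mathcal H^{i+j}(\phi_\pi \C_{\mathcal Y})_0$, which by the Milnor--L\^e description of vanishing cycles for the good representative $F$ and the contractibility of $\mathcal Y$ equals $\tilde H^{i+j}(Y_\delta)$. Convergence is automatic since the filtration is finite in each total degree: the spaces $D^k(F)$ are empty for $k$ large, so $K^\bullet$ is a bounded complex, and all stalks are finite-dimensional.

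The main point requiring care, and the step I expect to be the chief obstacle, is justifying that passing to the derived category does not lose information: one must check that the quasi-isomorphism $\C_{\mathcal Y} \simeq K^\bullet$ in $C(\mathcal Y)$ really does induce, after applying $\phi_\pi$ and stalks at $0$, an isomorphism on all cohomology — i.e., that the abutment of the spectral sequence is the hypercohomology of $\phi_\pi$ applied to the \emph{complex} $K^\bullet$ rather than to its separate cohomology sheaves. This is where working in $D^b_c(\mathcal Y)$ is essential: $\phi_\pi$ is defined on the derived category and sends quasi-isomorphisms to isomorphisms, so $\phi_\pi(\C_{\mathcal Y}) \cong \phi_\pi(K^\bullet)$ in $D^b_c(Y_0)$, and the second hypercohomology spectral sequence of the filtered object $\phi_\pi(K^\bullet)$ (filtered by the column degree) has exactly the stated $E_1$-page and abutment. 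One should also remark that the $d_1$-differential is indeed induced by $\partial^\bullet$, which follows from functoriality of $\phi_\pi$ applied to the maps $\varepsilon^{k,i}_*$, and that the whole construction is independent of the choice of good representative of $F$.
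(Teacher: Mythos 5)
Your proposal is correct and follows essentially the same route as the paper: replace $\C_{\mathcal Y}$ by the alternating complex via Proposition \ref{prp:GoryunovMondAlternatingComplex} and Lemma \ref{lem:AlternatingSheavesAreEqual}, apply the vanishing cycle functor and the stalk at $0$ in the derived category, and take the spectral sequence of the column (stupid) filtration, identifying the abutment with $\tilde H^{i+j}(Y_\delta)$ through the contractibility of the good representative. The paper's proof is just the concrete implementation of your appeal to the standard hypercohomology spectral sequence: it resolves the alternating complex by a double complex of injectives, checks that the functors defining nearby/vanishing cycles can be applied column-wise, and realizes $\phi_\pi\C_{\mathcal Y}$ as the total complex of the cone of a comparison morphism of double complexes, whose column filtration yields exactly the stated $E_1$-page.
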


\begin{proof}
  The standard procedure to obtain a quasi-isomorphism of the complex 
  $\Alt \varepsilon^\bullet_* \C_{D^\bullet(F)}$ 
  with a complex of injectives is to construct a double complex of injectives 
  $I^{\bullet,\bullet}$ where each column $I^{\bullet,j}$ resolves the sheaf 
  $\Alt \varepsilon_*^{j+1} \C_{D^{j+1}(F)}$, cf. \cite[Section 2]{GoryunovMond93}.
  Then the original complex is quasi-isomorphic to the total complex $\Tot(I^{\bullet,\bullet})$ 
  of this double complex $I^{\bullet,\bullet}$. 

  The \textit{nearby cycles} of $\pi$ are defined as 
  \[
    \psi_\pi \left( \Alt \varepsilon_*^\bullet \C_{D^\bullet(F)} \right) 
    :=
    i^{-1} R(\rho\circ j)_* (\rho \circ j)^{-1} 
    \left( \Alt \varepsilon_*^\bullet \C_{D^\bullet(F)} \right).
  \]
  This is a composition of three functors. The first one, 
  $(\rho \circ j)^{-1}$, takes injective sheaves on $\mathcal Y$ 
  to injective sheaves on the open subset $\mathcal Y \setminus Y_0$, 
  see \cite[Proposition 2.4.1]{KashiwaraShapira} and the third one, 
  $i^{-1}$, is an exact functor. To compute the nearby cycles 
  of $\Alt \varepsilon_*^\bullet \C_{D^\bullet(F)}$---which involves the derived pushforward in the middle---we may therefore apply the functor $i^{-1} (\rho \circ j)_* (\rho \circ j)^{-1}$
  to $\Tot(I^{\bullet,\bullet})$. 
  Note that applying this functor 
  to a seperate column $I^{\bullet,j}$ yields the nearby cycles of the single sheaf 
  $\Alt \varepsilon_*^{j+1} \C_{D^{j+1}(F)}$ considered as a complex of sheaves 
  concentrated in a single degree.

  The \textit{vanishing cycles}  $\phi_{\pi}\C_{\mathcal Y} $
  of $\pi$ are defined as the mapping 
  cone of the comparison morphism $c$ 
  \[
    \xymatrix{
      i^{-1} \C_{\mathcal Y} \ar[r]^c &
      \psi_{\pi} \C_{\mathcal Y} \ar[r] &
      \phi_\pi \C_{\mathcal Y} \ar[r]^>>>>>{[+1]} &
      .
    }
  \]
  Following Definition \ref{defGoodRep}),
  the image $\mathcal Y$ of a good representative is connected, hence the 
  stalk of $i^{-1}\C_{\mathcal Y}$ 
  at the origin is simply the vector space $\C$, concentrated in degree zero. It is easy to 
  check that on this single nontrivial term the comparison morphism $c$ 
  is always an inclusion which makes the vanishing cycles coincide 
  with the reduced cohomology of the disentanglement, cf. (\ref{eqn:vanishingCycleFunctorStalk}). 
  
  We may again replace $\C_{\mathcal Y}$ by the 
  alternating complex $\Alt \varepsilon_*^\bullet \C_{D^\bullet(F)}$ and 
  subsequently by $\Tot(I^{\bullet,\bullet})$ in the distinguished triangle of 
  the comparison morphism:
  \[
    \xymatrix{
      i^{-1}\Tot(I^{\bullet,\bullet})\ar[r]^<<<<c &
      i^{-1} (\rho\circ j)_* (\rho \circ j)^{-1}\Tot(I^{\bullet,\bullet}) \ar[r] &
      \phi_\pi\Tot(I^{\bullet,\bullet})\ar[r]^>>>>>{[+1]} &
      .
    }
  \]
  Now note that the functor $i^{-1} (\rho\circ j)_* (\rho \circ j)^{-1}$ 
  commutes with taking the total complex in a natural way. 
  Thus we may identify the distinguished triangle of the comparison 
  morphism $c$ with the total complex of the following ``comparison morphism of 
  double complexes''
  \[
    \xymatrix{
      i^{-1} I^{\bullet,\bullet}\ar[r]^<<<<{\tilde c} &
      i^{-1} (\rho\circ j)_* (\rho \circ j)^{-1}I^{\bullet,\bullet} \ar[r] &
      \text{Cone}(\tilde c) \ar[r]^>>>>>{[+1]}&
    }
  \]
  which turns the vanishing cycles $\phi_\pi\Tot(I^{\bullet,\bullet})$ 
  into the total complex of the double complex $\text{Cone}(\tilde c)$.

  The spectral sequence in question is now the spectral sequence of 
  this double complex $\text{Cone}(\tilde c)$. A column-wise 
  inspection of this double complex reveals that
  we indeed find the vanishing cycles of the alternating sheaves on the 
  first page.
\end{proof}

\begin{rem}\label{remIsomInftyPage}
  From the fact that the spaces $D^k(F)$ are empty for $k$ big enough, it follows that the spectral sequence collapses at a certain page. Since we chose a field for the coefficients in 
  cohomology, the infinity page determines the cohomology of the total complex. To be precise,  there is an isomorphism of vector spaces 
\[\mathcal H^\ell (\phi_\pi\C_{\mathcal Y})_0\cong \bigoplus_i E^{i,\ell-i}_\infty.\]
\end{rem}

\begin{ex}\label{exSpectralSeqBiGermImmersion}Here we come back to the immersion bi-germ  $f\colon (\C^n,S)\to(\C^{n+1},0)$  associated to a hypersurface $X=V(g)$, introduced in Example \ref{exDoublePointsBiGerm}, where we find a particularly simple spectral sequence.
  First of all, there are no triple or higher multiplicity points. This is because the stable unfolding $F$ of $f$ (or any unfolding of $f$ for that matter) is also a bi-germ of an immersion, hence it cannot map more than two points to one. This gives $\sD^k(F)=\emptyset$, and thus $D^k(f)=\emptyset$, for all $k\geq 3$ . Consequently, the nonzero terms of the first page 
  $E_1^{\bullet,\bullet}$ are concentrated in the second column, corresponding to the alternating cohomologies $H^i_{\Alt}(D^2(f_\delta))$. 

The fact that there are no triple points implies also that the maps $D^2(F)\to D(F)$ and $D^2(f_\delta)\to D(f_\delta)$ are homeomorphisms. Recall also from Example \ref{exDoublePointsBiGerm}
that $D(f_\delta)=M_1\sqcup M_2$, where $M_1$ and $M_2$ are copies of the Milnor fibre of $X=V(g)$. The homeomorphism $D^2(f_\delta)\to D(f_\delta)$ transforms the action of the generator $\sigma\in S_2$ into the map taking a point $x\in M_1$ to the same point in $M_2$, and vice-versa. In particular, after identifying $D^2(f_\delta)$ with $M_1\sqcup M_2$, we can choose a system of generators of the cohomology $H^i(D^2(f_\delta))$ consisting of some cocycles $c_i$  generating the cohomology of $M_1$ and their corresponding cocycles $\sigma\cdot c_i$, which generate the cohomology of $M_2$. It now follows immediately that the alternating part is generated by the cocycles $c_i-\sigma\cdot c_i\in H^i_{\Alt}(D^2(f_\delta))$. The  linear map extending $c_i\mapsto c_i-\sigma\cdot c_i$ gives an isomorphism 
\[H^i(M)\cong H^i_{\Alt}(D^2(f)).\]

Finally, since $E_1^{\bullet,\bullet}$ contains a single nonzero column, the spectral sequence collapses at page one, that is, $E_\infty^{\bullet,\bullet}=E_1^{\bullet,\bullet}$. Now, the isomorphism $\mathcal H^\ell(\phi_\pi\C_{\mathcal Y})_0\cong \bigoplus_i E^{i,\ell-i}_\infty$ takes the  form
\[\tilde H^i(M)\cong \tilde H^{i+1}(Y_\delta),\]
as Remark \ref{remCorrespondenceMilnorDisentanglement} claimed.
\end{ex}

\begin{rem}\label{remTwoPointSuspension}The previous example has a more visual version in homology (Figure \ref {BigermSuspension} depicts the case of $g=x^2+y^2$ which, after a change of coordinates, is the same as in Figure \ref{figSaddle}, but with a real picture better suited for the present discussion). We know that $D(f_\delta)$ consists of two copies  $M_1$ and $M_2$ of $M$, contained in two copies $U_1$ and $U_2$ of an open ball $U\in \C^n$. Now let $c$ be a cycle in $M$ and let $c_i$ be the corresponding copies in $U_i$ (the two circles one the left side in Figure \ref{figSaddle}).  Since $U$ is contractible, there are chains $a_i$ in $U_i$ with boundary $\delta a_i=c_i$(the green and blue disks).  Observe that $\partial a_i$ is supported on $M_i$, which is the double point space of $f_\delta$, and that $f_\delta$ glues $a_1$ and $a_2$ along the boundaries $\partial a_i$. After changing the sign of $a_2$ if necessary, we observe that $f_*(a_1+a_2)$ is an $(i+1)$-dimensional cycle on $Y_\delta$ (the blue and green cycle on the right side). The desired isomorphism is determined by $c\mapsto f_*(a_1+a_2)$.
\end{rem}

 \begin{figure}
\begin{center}
\includegraphics[scale=0.9]{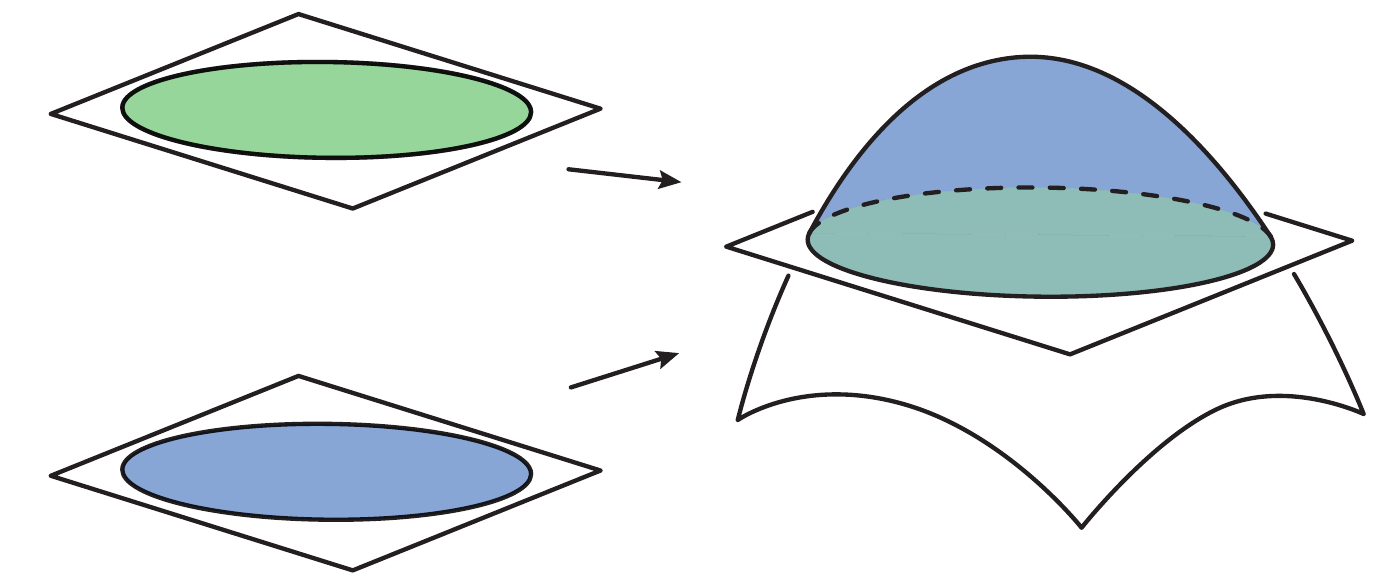}
\end{center}
\caption{The disentanglement of a bi-germ of immersion has the homology of a two-point suspension.}
\label{BigermSuspension}
\end{figure}

\section{Proof of Theorem \ref{KM}}

In this section we will use the notation of Theorem \ref{KM} and 
impose the hypothesis found there,
that is, $f\colon (\C^n,0) \to (\C^p,0)$ is a dimensionally correct
map-germ with $p>n$ and instability locus $\Inst(f)$ of dimension $d$,
$F\colon W \to V\times T$ is a good representative of a one-parameter unfolding of $f$ 
and $\mathcal Y=\im F$ is the image of $F$. 

We intend to control the stalk cohomology of $\phi_{\pi}\C_{\mathcal Y}$, 
i.e. the reduced cohomology of the disentanglement, by means of the first page 
of the spectral sequence in Lemma \ref{lem:SpectralSequenceForAlternatingCohomology}. 
Thus, we have to bound the vanishing cohomology of the alternating sheaves 
$\Alt \varepsilon_*^{j+1} \C_{D^{j+1}(F)}$ on the image and we will do so by 
exploiting the perversity of their respective shifts and estimating 
the dimension of the support of their vanishing cycles. 

\begin{lem} \label{lem:EstimateSupportVanishingCycles} 
  The support of $\phi_\pi \left(\Alt \varepsilon^k_*\C_{D^k(F)}\right)$ 
  is contained in $\Inst(f)$.
\end{lem}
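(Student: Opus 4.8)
The plan is to mimic the argument of Lemma \ref{lemSuppVanishingCycleDiscriminant}, but now working with the strict (equivalently, Gaffney) multiple point spaces attached to the unfolding $F$ rather than with the discriminant. The key point is that away from the instability locus the unfolding is a \emph{trivial} unfolding, and triviality of the unfolding forces triviality of the associated family of multiple point spaces; hence the vanishing cycles of $\pi$ on $D^k(F)$ — and therefore on the alternating sheaf $\Alt\varepsilon^k_*\C_{D^k(F)}$ — must vanish at such points.

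First I would fix a point $y\in \mathcal Y_0\setminus\Inst(f)$ in the central fibre lying outside the instability locus. Write $S=f^{-1}(y)\cap\Sigma(f)$, which is finite since $f$ is $\cK$-finite. By Proposition \ref{propInstContainsTheInstabilities} the multigerm of $f$ at $S$ is stable, so the one-parameter unfolding $F$ of this multigerm is trivial: there are unfoldings $\Phi$ of $(\id,S)$ and $\Psi$ of $(\id,y)$ fitting into the commuting square relating $F$ to $f\times\id_T$. The biholomorphisms $\Phi$ and $\Psi$ induce compatible biholomorphisms of the $k$-fold fibre products, hence identify the (strict, and likewise Gaffney) multiple point spaces $D^k(F)$ near the relevant fibre with $D^k(f)\times T$, intertwining the maps $\varepsilon^k$ and the $S_k$-actions. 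Consequently, near the point $y$ one has an isomorphism of $S_k$-sheaves
\[
  \varepsilon^k_*\C_{D^k(F)}\;\cong\;\bigl(\varepsilon^k_*\C_{D^k(f)}\bigr)\boxtimes\C_T ,
\]
and passing to alternating parts gives $\Alt\varepsilon^k_*\C_{D^k(F)}\cong\bigl(\Alt\varepsilon^k_*\C_{D^k(f)}\bigr)\boxtimes\C_T$ on a neighbourhood of $y$ in $\mathcal Y$.

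Next I would compute the stalk of the vanishing cycles using \eqref{eqn:vanishingCycleFunctorStalk}. Since the sheaf is, locally around $y$, a product with the constant sheaf on the disc $T$, the relevant relative hypercohomology
\[
  \mathcal H^{i}\bigl(\phi_\pi(\Alt\varepsilon^k_*\C_{D^k(F)})\bigr)_y
  = \mathbb H^{i+1}\bigl(\mathcal Y\cap B_r,\;\pi^{-1}(\delta)\cap B_r;\;\Alt\varepsilon^k_*\C_{D^k(F)}\bigr)
\]
is the relative cohomology of a space which retracts onto its slice $\pi^{-1}(\delta)\cap B_r$, with coefficients in a sheaf pulled back from that slice; hence it vanishes for all $i$. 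This shows $y\notin\supp\phi_\pi(\Alt\varepsilon^k_*\C_{D^k(F)})$, so the support is contained in $\mathcal Y_0\cap\Inst(f)=\Inst(f)$, as claimed.

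The main obstacle, and the place where care is needed, is the passage from triviality of the unfolding $F$ to a product decomposition of the \emph{Gaffney} multiple point spaces $D^k(F)$ — not merely the strict ones $\sD^k(F)$ — in a neighbourhood of the fibre over $y$, together with compatibility of this decomposition with the maps $\varepsilon^{k,j}$, the $S_k$-actions, and hence with the whole alternating complex. For the strict spaces this is essentially formal from the definition as a closure of an $S_k$-orbit locus, but for $D^k(F)$ one must invoke the independence of $D^k$ from the chosen stable unfolding (Proposition \ref{prp:PropertiesOfTheMultiplePointSpaces}) and the fact that stability of the multigerm at $S$ makes $D^k$ and $\sD^k$ coincide there, so that locally the two notions agree and the product structure transports as needed. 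Once this local identification is in place the vanishing-cycle computation is routine.
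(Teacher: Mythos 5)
Your proposal is correct and follows essentially the same route as the paper: triviality of the unfolding away from $\Inst(f)$, transport of this triviality to a product decomposition of the multiple point spaces compatible with $\varepsilon^k$ and the $S_k$-actions, and vanishing of the relative hypercohomology by the resulting product/retraction structure. The only cosmetic difference is how the product structure on the Gaffney spaces $D^k(F)\cong D^k(f)\times T$ is justified; the paper does this via functoriality under the isomorphisms $\Phi,\Psi$ together with the cited result \cite[Proposition 3.8]{Nuno-Ballesteros2015On-multiple-poi}, rather than via the coincidence of $D^k$ and $\sD^k$ at stable germs.
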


\begin{proof}Take a point away from the instability locus
  $y\in Y\setminus \Inst(f)$ and let $S=f^{-1}(y)$. Since $f$
is stable at $y$, the unfolding $(F,S\times\{0\})$ of the germ
$(f,S)$ is trivial. Therefore, there exist an unfolding $\Phi$ of
$(\id_{\C^n},S)$ and an unfolding $\Psi$ of $(\id_{\C^p},0)$, such that
\[(F,S\times\{0\})=\Psi^{-1}\circ ((f,S)\times\id_{(T,0)})\circ\Phi.\]
We choose representatives $F, f, \Phi$ and $\Psi$ satisfying
the equality\[F=\Psi^{-1}\circ (f\times\id_{T})\circ\Phi.\]

As one may easily check, this makes the multiple
points $D^k(F)$ and $D^k(f\times \id_{T})$ isomorphic via
$\Phi^{-1}\times\stackrel{k}{\dots}\times\Phi^{-1}$. Moreover,
the multiple points in $D^k(f\times \id_{T})$ are of the form
$((x_1,t),\dots,(x_k,t))$. Forgetting all but one of the copies of $t$
gives an isomorphism \cite[Proposition 3.8]{Nuno-Ballesteros2015On-multiple-poi} \[\Omega\colon D^k(f\times\id_{T})\to D^k(f)\times
T.\] In turn, the spaces $\cY$ and $Y\times T$ are isomorphic via $\Psi$.

There are two geometric considerations where the $k$-fold product structure
of $\Phi^{-1}\times\stackrel{k}{\dots}\times\Phi^{-1}$ plays a role:
First of all, the isomorphisms and the $\varepsilon^k$ mappings are compatible
in the sense that there is a commutative diagram
 \[
\begin{tikzcd} \phantom{Aaa} D^k(F)\phantom{Aaa}
\arrow{r}{\Omega\circ(\Phi^{-1}\times\stackrel{k}{\dots}\times\Phi^{-1})}
\arrow{d}{\varepsilon^k} &\phantom{Aaa}D^k(f)\times
T\phantom{Aaa}\arrow{d}{\varepsilon^k\times \id_T}\\
\phantom{Aaa}\cY\phantom{Aaa}\arrow{r}{\Psi}& \phantom{Aaa}Y\times
T.\phantom{Aaa} \end{tikzcd} 
\] 
Second, the map 
$\Omega\circ(\Phi^{-1}\times\stackrel{k}{\dots}\times\Phi^{-1})$ is
compatible with 
the action of the symmetric group $S_k$ on both
$D^k(F)$ and on $D^k(f)\times T$ (the latter
induced by the action of $S_k$ on $D^k(f)$).
This makes the construction of the $\Alt$ operators on
both sides equivalent and the pushforward
$\Psi_*$ takes the sheaf $\Alt \varepsilon^k_*\C_{D^k(F)}$ on $\cY$
to the sheaf $\Alt (\varepsilon^k\times \id_T)_*\C_{D^k(f)\times T}$
on $Y\times T$.

We can now show that  $y$ is not in the support of $\phi_\pi \left( \Alt
\varepsilon^k_*\C_{D^k(F)}\right)$ by checking the vanishing of  the stalk
cohomology: Take a suitable ball $B_r$ in $V$ around
$y$. Then, for $\delta$ sufficiently small we find
 \begin{eqnarray*}
   & & \mathcal H^i\left(\phi_\pi \left(\Alt \varepsilon^k_*\C_{D^k(F)}\right) \right)_{y}\\
    &=& \mathbb H^{i+1}\left(\mathcal Y\cap B_r, \mathcal Y\cap \pi^{-1}(\delta)\cap B_r;
      \Alt \varepsilon^k_*\C_{D^k(F)}  \right)\\
    & \cong & \mathbb H^{i+1}\left((Y\times T)\cap B_r, (Y\times
    \{\delta\})\cap B_r ; \Alt \varepsilon^k_*\C_{D^k(f)\times T} \right)\\
    &=& 0.
  \end{eqnarray*}
  The vanishing in the last line follows from the fact that 
  due to the product structure clearly 
  $(Y \times T) \cap B_r$ retracts onto the fiber $(Y \times \{\delta\}) \cap B_r$ 
  for $r \gg \delta > 0$.
\end{proof}

Now we can determine where the non-trivial entries of the first 
page of the spectral sequence in Lemma \ref{lem:SpectralSequenceForAlternatingCohomology}
are concentrated.

\begin{prop}\label{propFirstPage}
Let $F$ be a dimensionally correct one-parameter unfolding of a germ $(\C^n,0)\to(\C^p,0)$ with $p> n$. 
  \label{prp:nonZeroEntriesFirstPage}
  For every integer $k\geq 2$, the nonzero cohomologies 
  \[
    \mathcal H^i\left(\phi_\pi \left( \Alt \varepsilon^k_*\C_{D^k(F)}\right) \right)_{0}
  \] 
  are concentrated in degrees $i\geq 0$ with 
      \[
      kn-(k-1)p-d \leq i \leq kn-(k-1)p.
    \]
   In the case $k=1$ the above vanishing cycles are all zero. 
\end{prop}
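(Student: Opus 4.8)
The plan is to manufacture, for each $k\geq 2$, a perverse sheaf on the central fibre $Y=\pi^{-1}(0)$ whose stalk cohomology at the origin is, up to a shift, the alternating vanishing cohomology $\mathcal H^i(\phi_\pi\Alt\varepsilon^k_*\C_{D^k(F)})_0$ we want to control, and then to read off its concentration from the dimension of its support via Proposition~\ref{prp:stalkCohomologyBoundFromPerversity}. Since $f$ is dimensionally correct, so is the unfolding $F\colon\C^{n+1}\to\C^{p+1}$ (Proposition~\ref{propDimCorrectUnfolding}), hence $F$ is in particular strictly dimensionally correct (Remark~\ref{rem:DiscussionOfDimensionalCorrectness}); as the source dimension $n+1$ of $F$ is strictly smaller than its target dimension $p+1$, Theorem~\ref{thm:HoustonPervesity} together with Lemma~\ref{lem:AlternatingSheavesAreEqual} shows that $\Alt\varepsilon^k_*\C_{D^k(F)}[k(n+1)-(k-1)(p+1)]$ is perverse on $\mathcal Y$. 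Because $k(n+1)-(k-1)(p+1)=\bigl(kn-(k-1)p\bigr)+1$, applying the shifted vanishing cycle functor $\phi_\pi[-1]$, which preserves perversity by Theorem~\ref{thm:vanishingCycleFunctorPreservesPerversity}, shows that
\[
  \mathscr P_k:=\bigl(\phi_\pi\Alt\varepsilon^k_*\C_{D^k(F)}\bigr)[kn-(k-1)p]
\]
is perverse on $Y$. By Lemma~\ref{lem:EstimateSupportVanishingCycles} the support of $\mathscr P_k$ lies in $\Inst(f)$ and so has dimension at most $d$, whence Proposition~\ref{prp:stalkCohomologyBoundFromPerversity} yields $\mathcal H^i(\mathscr P_k)_0=0$ for $i\notin[-d,0]$. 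Undoing the shift, this is precisely the vanishing of $\mathcal H^i(\phi_\pi\Alt\varepsilon^k_*\C_{D^k(F)})_0$ for $i\notin[kn-(k-1)p-d,\,kn-(k-1)p]$, which is the asserted range apart from the constraint $i\geq 0$.

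To obtain the constraint $i\geq 0$ I would use that $\Alt\varepsilon^k_*\C_{D^k(F)}$ is an honest sheaf concentrated in degree $0$. As in the proof of Theorems~\ref{KMB} and~\ref{KMA}, for a good representative the stalk at the origin is $\mathcal H^i(\phi_\pi\Alt\varepsilon^k_*\C_{D^k(F)})_0=\mathbb H^{i+1}(\mathcal Y,Y_\delta;\Alt\varepsilon^k_*\C_{D^k(F)})$, which vanishes automatically for $i+1<0$, while for $i=-1$ it equals the kernel of the restriction map $H^0_{\Alt}(D^k(F))\to H^0_{\Alt}(D^k(f_\delta))$; here I use Remark~\ref{remHypercohomAltSheavesDk} and the identity $\pi^{-1}(\delta)=D^k(f_\delta)$ from Proposition~\ref{prp:PropertiesOfTheMultiplePointSpaces}. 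Now dimensional correctness, together with the general lower bound on the dimension of multiple point spaces, forces $D^k(F)$ to be empty (in which case there is nothing to prove) or pure of dimension $kn-(k-1)p+1$, strictly larger than $\dim D^k(f)=kn-(k-1)p$; hence no irreducible component of $D^k(F)$ can be contained in the central fibre $D^k(f)=D^k(F)\cap\{t=0\}$, so every connected component of $D^k(F)$ meets the generic fibre $D^k(f_\delta)$. Consequently the restriction $H^0(D^k(F))\to H^0(D^k(f_\delta))$, and with it its alternating part, is injective, the kernel above vanishes, and $\mathcal H^i(\phi_\pi\Alt\varepsilon^k_*\C_{D^k(F)})_0=0$ for all $i<0$. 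This finishes the case $k\geq 2$.

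For $k=1$ one has $\Alt\varepsilon^1_*\C_{D^1(F)}=F_*\C_W$, since $\sD^1(F)=D^1(F)=W$ and $S_1$ is trivial. As $F$ is finite and hence proper, vanishing cycles commute with $F_*$ (see \cite{D2}), so $\phi_\pi F_*\C_W\cong F_*\phi_{\pi\circ F}\C_W$; but $\pi\circ F\colon W\to T$ is the restriction of a coordinate projection, hence a submersion on the smooth manifold $W$, so $\phi_{\pi\circ F}\C_W=0$ and the $k=1$ vanishing cycles all vanish.

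I expect the only non-formal step to be the bound $i\geq 0$: every other step is a direct application of the perversity results collected in Section~\ref{secPreliminaries} together with Lemma~\ref{lem:EstimateSupportVanishingCycles}, whereas ruling out a nonzero alternating class in degree $-1$ genuinely relies on the dimensional correctness of the unfolding $F$ --- it is this that guarantees $D^k(F)$ is pure of the expected dimension, and hence that no component of $D^k(F)$ is absorbed into the central fibre.
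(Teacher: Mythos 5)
Your core argument coincides with the paper's: dimensional correctness of $F$ plus Theorem \ref{thm:HoustonPervesity} and Lemma \ref{lem:AlternatingSheavesAreEqual} give perversity of $\Alt\varepsilon^k_*\C_{D^k(F)}[kn-(k-1)p+1]$, Theorem \ref{thm:vanishingCycleFunctorPreservesPerversity} preserves it under $\phi_\pi[-1]$, and Lemma \ref{lem:EstimateSupportVanishingCycles} together with Proposition \ref{prp:stalkCohomologyBoundFromPerversity} yields exactly the window $[kn-(k-1)p-d,\,kn-(k-1)p]$. You differ only in the two auxiliary points. For the constraint $i\geq 0$, the paper identifies the stalk with $\tilde H^i_{\Alt}(D^k(f_\delta))$, a subspace of the reduced singular cohomology of $D^k(f_\delta)$, which kills negative degrees at once; your route (trivial vanishing for $i\leq -2$, and for $i=-1$ injectivity of $H^0_{\Alt}(D^k(F))\to H^0_{\Alt}(D^k(f_\delta))$) is viable but has one caveat: you invoke $\dim D^k(f)=kn-(k-1)p$, i.e.\ dimensional correctness of the germ $f$ itself, whereas the proposition as stated only assumes the unfolding $F$ is dimensionally correct, and purity of $D^k(F)$ alone does not rule out an irreducible component of $D^k(F)$ lying inside the central fibre $\{t=0\}$ (that is precisely what dimensional correctness of $f$ excludes). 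In the intended application this is harmless, since Theorem \ref{KM} assumes $f$ dimensionally correct and Proposition \ref{propDimCorrectUnfolding} transfers it to $F$, but to prove the proposition literally as stated you should either add that hypothesis or justify the exclusion differently. Also, the step ``every connected component of $D^k(F)$ meets the generic fibre'' deserves the small-representative justification: after shrinking, the components of $D^k(F)\cap(\varepsilon^k)^{-1}(B_r)$ are germs along the finite set $(\varepsilon^k)^{-1}(0)$, and on each irreducible component not contained in $\{t=0\}$ the function $t$ is non-constant, hence its image contains a punctured disc, so the component meets $\pi^{-1}(\delta)$ for all small $\delta$; then a locally constant (alternating) class vanishing on the fibre vanishes, as you claim. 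Your $k=1$ argument (vanishing cycles commute with the finite pushforward $F_*$, and $\pi\circ F=t$ is a submersion on the smooth source, so $\phi_{t}\C_W=0$) is correct and even stronger than the paper's, which only uses the product structure of the source to kill the stalk at the origin.
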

\begin{proof}
  Since $F$ is dimensionally correct, the sheaves
  \[
    \Alt \varepsilon^k_*\C_{D^k(F)} [kn-(k-1)p+1]
  \]
  are perverse, and, by virtue of 
  Theorem \ref{thm:vanishingCycleFunctorPreservesPerversity}, 
  so are the sheaves $$\phi_\pi\left(\Alt \varepsilon^k_*\C_{D^k(F)}\right)[kn-(k-1)p]$$ 
  on $Y_0$. Since these sheaves
  are supported on a space of dimension at most $d$, it follows 
  from Proposition \ref{prp:stalkCohomologyBoundFromPerversity} that
  the non trivial cohomologies of their stalks at the origin are 
  concentrated in degrees $i$ with $-d \leq i \leq 0$. 

  Shifting everything back by $kn-(k-1)p$, we obtain the desired bounds 
  for $k\geq 2$.
  Note that, since the alternating cohomology
  \begin{eqnarray*}
    \mathcal H^i\left( 
    \phi_{\pi}\left(  \Alt \varepsilon_*^k \C_{D^k(F)}  \right) \right)_0
    &=& \tilde{H}^i_{\Alt} (D^k(f_\delta))
    \subset \tilde{H}^i(D^k(f_\delta))
  \end{eqnarray*}
  is a subspace of the singular reduced cohomology of the disentanglement, 
  there can be no contributions in negative degrees. 

  For $k=1$ the disentanglement always provides a product structure 
  \[(D^1(F),(0,0)) \cong (X \times T,(0,0))\] and therefore 
  \[
    \mathcal H^i\left(\phi_\pi \left(\Alt \varepsilon^1_*\C_{D^1(F)}\right)\right)_{0}
    = H^i(X\times T,X\times \delta)=0.
  \]
  This finishes the proof.
\end{proof}

\begin{proof}[Proof of Theorem \ref{KM}]
The proof follows by the same
spectral sequence argument as in \cite[Theorem 1.1]{PZ}:
The concentration of cohomology follows immediately
from the isomorphism
\[
  \mathcal H^\ell(\phi_\pi\C_{\mathcal Y})_0\cong
  \bigoplus_i E^{i,\ell-i}_\infty
\]
 from Remark \ref{remIsomInftyPage} 
and the vanishing of the entries of the first page 
coming from Proposition \ref{prp:nonZeroEntriesFirstPage}.
\end{proof} 
Let us give an example which illustrates the general
situation:
\begin{ex}
  Let $f\colon(\C^{16},0)\to(\C^{21},0)$
  be a dimensionally correct map whose instability locus has
  dimension $d=2$.  According to Proposition \ref{propFirstPage}, the possibly nonzero entries in the first page of the spectral sequence $E_1^{i,j}$ from Lemma 
  \ref{lem:SpectralSequenceForAlternatingCohomology} are the following:
 \begin{equation*}
  \begin{array}{ccccccccccccccccccccccccccccc}
 \multicolumn{1}{c|}{i \uparrow}	&
       &    &    &     \\
 \multicolumn{1}{c|}{}	&
       &    &    &     \\
  \multicolumn{1}{c|}{11} &
      & H^{11}_{\Alt}(D^2(f_\delta)) &   &   & \\
  \multicolumn{1}{c|}{10} &
      \phantom{\vdots}    & H^{10}_{\Alt}(D^2(f_\delta)) &   &   \\
  \multicolumn{1}{c|}{	  9} &
   \phantom{\vdots}       & H^{9}_{\Alt}(D^2(f_\delta))&    &     \\
  \multicolumn{1}{c|}{	  8} &
     \phantom{\vdots}    &    &    &   
       \\
  \multicolumn{1}{c|}{	  7 }&
   \phantom{\vdots}      &    &    &   
       \\
   \multicolumn{1}{c|}{  6} &
  \phantom{\vdots}       &   & H^{6}_{\Alt}(D^3(f_\delta)) &    
       \\
  \multicolumn{1}{c|}{	  5 }&
 \phantom{\vdots}         &    & H^{5}_{\Alt}(D^3(f_\delta)) &    
       \\
  \multicolumn{1}{c|}{	  4} &
 \phantom{\vdots}        &   & H^{4}_{\Alt}(D^3(f_\delta)) &    
       \\
  \multicolumn{1}{c|}{	  3} &
 \phantom{\vdots}        &   &    &   
       \\
  \multicolumn{1}{c|}{	  2} &
 \phantom{\vdots}        &   &    & 
       \\
  \multicolumn{1}{c|}{	  1} &
 \phantom{\vdots}        &   &   & H^{1}_{\Alt}(D^4(f_\delta)) 
       \\
  \multicolumn{1}{c|}{	  0} &
       \phantom{H^{10}_{\Alt}(D^2(f_\delta))}  & \phantom{\vdots}  &   & H^{0}_{\Alt}(D^4(f_\delta))  \\
\hline	\multicolumn{1}{c|}{}
& 0 & 1 & 2 & 3 &\underset{j}{\longrightarrow}
  \end{array}
\end{equation*}

In this case, the sequence necessarily collapses on this first page. In
general, the differentials between the nonzero entries on the first
page can lead to cancellations for the following ones and on the limit
page. In either case, the positions of the nonzero entries on the first 
page give a bound on the nonzero entries of 
$\bigoplus_i E_\infty^{i,k-i} \cong \mathcal
H^k(\phi_\pi\C_{\mathcal Y})_0$. 
For this particular example we see that the non-trivial cohomologies
of a disentanglement $Y_\delta$ are concentrated on the degrees $\ell\in
\{3,4\}\cup\{6,7,8\}\cup\{10,11,12\}.$ 

\end{ex}

\section{Monodromy for disentanglements}\label{secMonodromy}

Our study of the monodromy for disentanglements in 
Theorem \ref{monodromy} will be based on the monodromy 
on the multiple point spaces and their alternating cohomology. This 
requires an adaptation of the definitions to fibrations with 
group actions. 

Note that any stable one parameter unfolding 
\[
  F \colon (\C^n,0) \times (\C,0) \to (\C^p,0)\times (\C,0)
\]
of a finite map 
germ $f$ with $p>n$ as in Theorem \ref{monodromy}
gives rise to analytic fibrations 
\begin{equation}
  \pi \circ \varepsilon^k \colon D^k(F) \cap (\varepsilon^k)^{-1}(B \times D\setminus\{0\}) 
  \to D\setminus\{0\}
  \label{eqn:MultiplePointsFibrations}
\end{equation}
for suitable choices of a representative $F$, a ball $B \subset \C^p$ in 
the target of $f$, and a disc $D \subset\C$ in 
the parameter space.
By construction, the symmetric group $\mathfrak S_k$ acts fiberwise on the multiple point 
space $D^k(F) \cap (\varepsilon^k)^{-1}(B\times D)$, for each $k>0$.

As already noted by K. Houston in \cite[Lemma 2.14]{Houston97},
there exist \textit{equivariant} 
local trivializations. 
We may therefore suppose that the trivialization of the pullback of the fibration 
(\ref{eqn:MultiplePointsFibrations}) to the universal cover 
$\exp \colon S \to D\setminus\{0\}$ is $\mathfrak S_k$-equivariant and 
that the parallel transport of the fiber 
\[
  h \colon D^k(f_\delta) \to D^k(f_\delta) 
\]
along a closed, counterclockwise loop in $D^*$ around the origin 
commutes with the $\mathfrak S_k$-action on $D^k(f_\delta)$. 
It is easy to see that this implies that also the maps induced by $h$ on cohomology 
\[
  h^i \colon H^i( D^k(f_\delta)) \to 
  H^i( D^k(f_\delta) )
\]
restrict to operators on the alternating cohomology 
$H^i_{\Alt}(D^k(f_\delta))$. We therefore have 
a well defined monodromy action  
\[
  h \colon 
  \phi_\pi\left(\Alt \varepsilon_*^k \C_{D^k(F)} \right)
  \to \phi_\pi\left(\Alt \varepsilon_*^k \C_{D^k(F)} \right)
\]
on the vanishing cycles of the alternating sheaves. As we shall see now, this gives the vanishing cycles of 
the alternating sheaves the structure of a $\C[s, s^{-1}]$-module where 
$s$ is a formal variable standing for the monodromy automorphism $h$.

To relate monodromy to our usual spectral sequence, it is useful to introduce a point of view about eigenvalues and Jordan blocks slightly different from that of the introduction: Consider the monodromy of a fibration $
  E\xrightarrow{\pi} D^*$ over the punctured disk
with fibre $F$, and assume that the fibration was obtained as a small representative of an analytic germ $(E,0)\to(D,0)$, as in \cite{Le78}. Consider the one-variable Laurent polynomial ring $R:=\C[s, s^{-1}].$ The monodromy action gives each cohomology $H^i(F)$ an $R$-module structure, by declaring that multiplication by $s$ acts on $H^i(F)$  by the  monodromy automorphism $h^i\colon H^i(F)\to H^i(F)$. It is well known that $F$ is a homotopy equivalent to a finite CW-complex, and that each monodromy automorphism $h^i$ is idempotent. As a consequence, each $H^i(F)$ is a finitely generated torsion $R$-module. Note that $R$ is a principal ideal domain so that according to the structure theorem, every finitely generated torsion $R$-module $A$ can be uniquely decomposed as \[ A\cong \oplus_{j=1}^m R/(s-\lambda_j)^{a_j},\]
 where $\lambda_j $ are non-zero complex numbers and $a_j$ are positive integers. It turns out that the set of eigenvalues of $h^i$ can be recovered as \[\supp(A):=\{ \lambda_j \}_{1\leq j \leq m},\] and the maximal size of the Jordan blocks  is \[J(A):= \max_{1 \leq j \leq m} \{a_j\}.\]

Now we list two simple facts related to finite generated torsion $R$-modules, where the proofs are left to the reader. 
\begin{lem}\label{cm1}
Let $A \overset{u}{\to} B \overset{v}{\to}  C$ be a complex of finitely generated torsion $R$-modules. Then $\Ker v/ \im u$ is also a finitely generated torsion $R$-module and we have that \begin{itemize}
\item[(a)] $\supp(\Ker v/ \im u) \subset \supp( B) $,
\item[(b)] $J(\Ker v/ \im u) \leq J(B) $.
\end{itemize}
\end{lem}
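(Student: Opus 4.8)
The plan is to prove Lemma \ref{cm1} by a direct application of the structure theorem for finitely generated modules over the principal ideal domain $R = \C[s,s^{-1}]$. First I would observe that $R$ is Noetherian, so that any submodule or quotient of a finitely generated $R$-module is again finitely generated; in particular $\Ker v$ is a submodule of the finitely generated module $B$, hence finitely generated, and $\Ker v/\im u$ is a quotient of it, hence finitely generated as well. Moreover, since $B$ is torsion and $\Ker v/\im u$ is a subquotient of $B$, it is torsion too. Thus the structure theorem applies and $\supp(-)$ and $J(-)$ are defined for all three modules in sight.

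Next I would establish part (a). Write $Q := \Ker v/\im u$. Since $Q$ is a subquotient of $B$, I would argue that every associated prime of $Q$ is an associated prime of $B$; equivalently, working with the primary decomposition coming from the structure theorem, each cyclic summand $R/(s-\lambda)^a$ appearing in $Q$ forces $(s-\lambda)$ to be among the prime factors of the annihilator of $B$. Concretely, the support (in the module-theoretic sense $\{\lambda : Q_{(s-\lambda)} \neq 0\}$) of a subquotient is contained in the support of the ambient module, and the $\supp$ defined in the excerpt via the structure theorem agrees with this. This gives $\supp(Q) \subseteq \supp(B)$.

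For part (b), I would localize at each maximal ideal $\mathfrak m = (s-\lambda)$ with $\lambda \in \supp(B)$. Localization is exact, so $Q_{\mathfrak m} = \Ker(v_{\mathfrak m})/\im(u_{\mathfrak m})$ is a subquotient of $B_{\mathfrak m}$. Over the discrete valuation ring $R_{\mathfrak m}$, a finitely generated torsion module decomposes as a direct sum of cyclic modules $R_{\mathfrak m}/\mathfrak m^{a_i}$, and the maximal exponent $a_i$ equals the largest $a$ with $\mathfrak m^{a-1} B_{\mathfrak m} \neq 0$, i.e. the nilpotency index of multiplication by $(s-\lambda)$ (equivalently, the largest Jordan block size for that eigenvalue). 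A submodule of $B_{\mathfrak m}$ has all its invariants bounded by those of $B_{\mathfrak m}$ (e.g.\ because the elementary divisors of a submodule divide those of the module, by the theory over a PID, or simply because $\mathfrak m^a$ annihilates the submodule whenever it annihilates $B_{\mathfrak m}$), and the same bound passes to the quotient $Q_{\mathfrak m}$ of that submodule since $\mathfrak m^a Q_{\mathfrak m} = 0$ whenever $\mathfrak m^a \Ker(v_{\mathfrak m}) = 0$. Taking the maximum over all $\lambda$ yields $J(Q) \leq J(B)$.

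I do not anticipate a serious obstacle here; the only mild subtlety is matching the two notions of ``support'' and ``Jordan block size'' — the one phrased in the excerpt via the explicit decomposition $A \cong \bigoplus_j R/(s-\lambda_j)^{a_j}$ and the intrinsic one via localization and annihilators — but this is a standard reconciliation. The key point to get right is that both $\supp$ and $J$ are monotone under passing to submodules and to quotients, so that they are monotone under subquotients; once that is in place, both (a) and (b) are immediate, and the statement about $\Ker v/\im u$ being finitely generated torsion is just Noetherianity plus the torsion property being inherited by subquotients.
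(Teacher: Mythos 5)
Your proof is correct, and since the paper explicitly leaves the proof of this lemma to the reader, there is no written argument to compare it against; your route (Noetherianity for finite generation, torsion inherited by subquotients, then monotonicity of $\supp$ and of the annihilator exponent at each prime $(s-\lambda)$ under passing to submodules and quotients) is exactly the standard argument the authors intend. The only point worth stating explicitly, which you already handle, is that $J$ at a fixed $\lambda$ is the minimal $a$ with $(s-\lambda)^a$ killing the local piece, so it can only drop for a subquotient.
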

\begin{lem}\label{cm2}
Let $0  \to A \to  B \to   C\to  0$ be a short exact sequence of finitely generated torsion $R$-modules. Then  we have that \begin{itemize}
\item[(a)] $\supp(B) = \supp( A) \cup \supp(C) $,
\item[(b)] $\max\{ J(A), J(C)\} \leq J(B) \leq J(A)+J(C) $.
\end{itemize}
\end{lem}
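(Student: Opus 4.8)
To prove Lemma~\ref{cm2}, the plan is to reduce both statements to elementary linear algebra by using that a finitely generated torsion module $A$ over $R=\C[s,s^{-1}]$ is the same thing as a finite-dimensional $\C$-vector space equipped with an invertible endomorphism, namely multiplication by $s$: each cyclic summand $R/(s-\lambda)^{a}$ with $\lambda\neq 0$ is finite-dimensional over $\C$, and on it $s$ acts by a single Jordan block of size $a$ with eigenvalue $\lambda$. Consequently $\supp(A)$ is exactly the set of eigenvalues of $s$ acting on $A$, and $J(A)$ is the size of the largest Jordan block of this operator. A short exact sequence $0\to A\to B\to C\to 0$ of such modules then amounts to an $s$-invariant subspace of $B$ isomorphic to $A$ whose quotient is isomorphic to $C$, and the primary decomposition of each module coincides with the decomposition into generalized eigenspaces of $s$.

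For part (a) I would invoke multiplicativity of the characteristic polynomial: choosing a basis of $B$ adapted to the subspace $A$, the operator $s$ on $B$ becomes block upper triangular with diagonal blocks the operators induced on $A$ and on $C$, so $\chi_B=\chi_A\cdot\chi_C$. Hence the set of roots of $\chi_B$---which is $\supp(B)$---is the union of the sets of roots of $\chi_A$ and $\chi_C$, that is, $\supp(B)=\supp(A)\cup\supp(C)$. (Equivalently, one localizes the sequence at the prime $(s-\lambda)$ and uses exactness of localization to see that $B_{(s-\lambda)}\neq 0$ if and only if $A_{(s-\lambda)}\neq 0$ or $C_{(s-\lambda)}\neq 0$.)

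For part (b), fix an eigenvalue $\lambda$ and pass to the corresponding generalized eigenspaces $A_\lambda\subset B_\lambda$, $C_\lambda$, on which $t:=s-\lambda$ acts nilpotently; since $J$ of a module is the maximum over its eigenvalues of the nilpotency index of the corresponding $t$, it suffices to prove both inequalities for $t$ acting on the exact sequence $0\to A_\lambda\to B_\lambda\to C_\lambda\to 0$ and then take the maximum over $\lambda$. The lower bound is immediate, since $A_\lambda$ is a $t$-submodule and $C_\lambda$ a $t$-quotient of $B_\lambda$: if $t^{N}B_\lambda=0$ then $t^{N}A_\lambda=0$ and $t^{N}C_\lambda=0$, and taking $N=J(B)$ gives $J(A),J(C)\leq J(B)$. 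For the upper bound, let $N_A$ and $N_C$ be the nilpotency indices of $t$ on $A_\lambda$ and on $C_\lambda$; given $b\in B_\lambda$, its image in $C_\lambda$ is annihilated by $t^{N_C}$, hence $t^{N_C}b\in A_\lambda$ and therefore $t^{N_A}(t^{N_C}b)=0$. Thus $t^{N_A+N_C}$ kills $B_\lambda$, so its nilpotency index there is at most $N_A+N_C$, and taking maxima over $\lambda$ yields $J(B)\leq J(A)+J(C)$.

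None of the steps presents a genuine obstacle; the one point I would state carefully is that a short exact sequence need not respect the primary decomposition of the middle term in general, but here the generalized eigenspace (equivalently, primary) decomposition is canonically determined by the $s$-action, which is preserved by the morphisms in the sequence, so the sequence does split as a direct sum over the eigenvalues $\lambda$ and the reduction in part (b) is legitimate. The whole argument transcribes verbatim into the language of finite-length modules over the discrete valuation rings $R_{(s-\lambda)}$, should a purely commutative-algebra formulation be preferred.
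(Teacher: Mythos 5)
Your proof is correct. The paper explicitly leaves the proofs of Lemmas \ref{cm1} and \ref{cm2} to the reader, so there is no argument in the text to compare against; your reduction to Jordan-block linear algebra (equivalently, to the primary decomposition over the PID $\C[s,s^{-1}]$, which is preserved by $s$-equivariant maps and by exactness of localization) is precisely the standard elementary argument the authors had in mind, and both the eigenvalue statement via $\chi_B=\chi_A\chi_C$ and the bound $J(B)\leq J(A)+J(C)$ via $t^{N_C}b\in A_\lambda$, $t^{N_A}t^{N_C}b=0$ are carried out correctly.
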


\begin{proof}[Proof of Theorem \ref{monodromy}]
The explanation in the beginning of this section shows that the spectral sequence
\[ E_1^{i,j}= \mathcal H^i\left(
   \phi_\pi\left(\Alt \varepsilon^{}_* \C_{D^{j+1}(F)}\right)
   \right)_0 \Rightarrow \tilde{H}^{i+j}(Y_\delta).\]
can be studied in the category of $R$-modules, where every entry in it is a torsion $R$-module.

The assumption that $f$ has corank one implies that the one-parameter unfolding $F$ of $f$ also has corank one. Since $F$ is stable by assumption, the spaces  $D^{k}(F)$ are smooth and have the right dimension. The generic fibre $D^k(f_\delta)$ coincides with the Milnor fiber of the germ of maps:
 \[D^{k+1}(F) \to T.\] 
Then the classical monodromy theorem about eigenvalues and size of Jordan blocks apply to the monodromy automorphisms $ 
H^i( D^{k} (f_\delta))\to H^i( D^{k} (f_\delta))$. Hence, by Lemma \ref{cm2}, the size of the Jordan blocks of the monodromy automorphism $H^i_{\Alt}( D^{k} (f_\delta))\to H^i_{\Alt}( D^{k} (f_\delta))$ is at most $i+1$, i.e., 
$$J(E^{i,k-1}_1)\leq i+1$$

Since the page $E^{\bullet, \bullet}_{r+1}$ is computed from $E^{\bullet, \bullet}_{r}$ by taking cohomologies, we conclude from Lemma \ref{cm1} that the infinity page satisfies the above  property as well. 
Note that $E_1^{i,k-1}=0$ when $k\leq 1$.
By the isomorphism  $H^\ell (Y_\delta)\cong \bigoplus_i E^{i,\ell-i}_\infty$ and Lemma \ref{cm2}, we obtain that 
\[J(H^\ell (Y_\delta))\leq \sum_{0\leq i <\ell} J(E^{i,\ell-i}_\infty) \leq \sum_{0\leq i < \ell} (i+1)= \ell(\ell+1)/2 .\]

When $f$ has only isolated instability locus and $p>n+1$, then Proposition \ref{propFirstPage} gives us that $E_1^{i,k-1}$ are concentrated in degrees $ i=kn-(k-1)p$. In particular, $p>n+1$ implies that the spectral sequence degenerates at $E_1$-page. Hence the 
 possibly non-trivial reduced cohomologies  are 
\begin{center}
 $\tilde{H}^{kn-(k-1)(p-1)}(Y_\delta)\cong \tilde{H}^{kn-(k-1)p}_{\Alt}(D^k(f_\delta))$ for $1< k \leq  \left\lfloor \frac{p}{p-n} \right\rfloor$
\end{center} 
 with the corresponding size of Jordan block at most $kn-(k-1)p+1$. 
\end{proof}

\begin{rem}
  \label{rem:RootsOfUnity}
  Observe that the proof of Theorem \ref{monodromy} can be used to show
  that the eigenvalues of $h^i\colon H^i (Y_\delta)\to H^i (Y_\delta)$ are
  roots of unity, directly from the classical result about eigenvalues
  of the monodromy of hypersurfaces (that is, with no resort  to the more
  general result of L\^e for fibrations on analytic spaces). Moreover,
  it shows that the eigenvalues of the monodromy on $H^i (Y_\delta)$ are
  a subset of the union of eigenvalues of the monodromies on $H^i_{\Alt}
  (D^k(f_\delta)),k\geq 0$.
\end{rem}

\begin{rem} We saw earlier in Remark \ref{remTwoPointSuspension} 
  how the study of Milnor fibers of hypersurfaces 
  $g \colon (\C^{n+1},0) \to (\C,0)$ was equivalent to the 
  study of disentanglements of bi-germs $f \colon (\C^n,\{p,q\})\to (\C^{n+1},0)$. 
  The above considerations show that this also comprises the monodromy:
  Since all the ingredients involved in the spectral sequence admit an $R$-module structure, 
  the isomorphism $\tilde H^i(M)\cong \tilde H^{i+1}(Y_\delta)$ is an isomorphism of 
  $R$-modules and therefore the monodromies on both sides are compatible 
  by the considerations in Remark \ref{remCorrespondenceMilnorDisentanglement}.
\end{rem}

\end{document}